\newtheorem{Theorem}{Theorem}[section]
\newtheorem{Lemma}{Lemma}[section]
\newtheorem{Proposition}{Proposition}[section]
\newtheorem{Remark}{Remark}[section]
\newtheorem{Definition}{Definition}[section]
\numberwithin{equation}{section}
\def\Xint#1{\mathchoice
{\XXint\displaystyle\textstyle{#1}}%
{\XXint\textstyle\scriptstyle{#1}}%
{\XXint\scriptstyle\scriptscriptstyle{#1}}%
{\XXint\scriptscriptstyle\scriptscriptstyle{#1}}%
\!\int}
\def\XXint#1#2#3{{\setbox0=\hbox{$#1{#2#3}{\int}$ }
\vcenter{\hbox{$#2#3$ }}\kern-.6\wd0}}
\begin{document}
\title[Regularity of weak solutions ]{\bf Regularity of weak solutions to a  certain class of   parabolic system}
\author{Zhong Tan}
\address{School of Mathematical Sciences and Fujian Provincial Key Laboratory on Mathematical Modeling and Scientific Computing, Xiamen University,
Xiamen, Fujian , 361005 , P. R. China}
\email[Z. Tan]{tan85@xmu.edu.cn}
\author{Jianfeng Zhou}
\address{School of Mathematical Sciences, Peking University, Beijing 100871, China}
\email[J. Zhou]{jianfengzhou@pku.edu.cn}
\thanks{Corresponding author:Jianfeng Zhou, jianfengzhou@pku.edu.cn}
\thanks{
This work was supported by the National Natural Science Foundation
 of China (No. 11271305, 11531010).}
\begin{abstract}
We study  the regularity of weak solutions to a certain class of second order parabolic system under the only assumption of continuous coefficients. By using the $A-$caloric approximation argument, we claim that the weak solution $u$ to such system is locally H\"{o}lder continuous with any  exponent $\alpha\in(0,1)$ outside  a singular set with zero parabolic measure. In particular, we prove that the regularity point in $Q_T$ is an open set with full measure, and we obtain a general criterion for a weak solution to be regular in the neighborhood of a given point. Finally, we deduce the fractional time and fractional space differentiability of $D u$, and at this stage, we obtain the Hausdorff dimension of singular set of $u$.
\bigbreak
\noindent
{\bf \normalsize Keywords }  { Parabolic system\,; regularity\,;  H\"{o}lder continuity\,;  weak solution\,; Hausdorff dimension.}\bigbreak
\end{abstract}
\subjclass[2010]{35D30;35K10; 35K55}

\maketitle
\section{Introduction}
Let  $\Omega\subset \mathbb{R}^n$  ($n\geq2$) be a bounded domain, the aim of this work is to give a study of regularity properties of weak solution to the following inhomogeneous parabolic system
\begin{equation}\label{1.1}
\partial_t u-{\rm div}\ a(z,u,Du)=b(z,u,Du),
\end{equation}
with $z=(x,t)\in\Omega\times(-T,0)\equiv Q_T$, and $T>0$. $a(\cdot):Q_T\times \mathbb{R}^N\times \mathbb{R}^{Nn}\rightarrow \mathbb{R}^{Nn}$, $u:Q_T\rightarrow \mathbb{R}^N$, $b:Q_T\times \mathbb{R}^N\times \mathbb{R}^{Nn}\rightarrow \mathbb{R}^N$, $N\in \mathbb{Z}^+$, $N\geq1$.
In general, the solution of parabolic systems (\ref{1.1})  can not be expected to be regular everywhere on the domain, even the homogeneous case
\begin{equation}\label{1.2}
\partial_t u-{\rm div}\ a(z,u,Du)=0.
\end{equation}
It is worth to note that everywhere regularity can be obtained only with special structure on $a(z,u,Du)$ such as the evolutionary $p-$Laplacian system
\begin{equation*}
\partial_t u-{\rm div}(|Du|^{p-2}Du)=0,
\end{equation*}
for the regularity problem was settled by the fundamental contributions of Dibenedetto's and Friedman \cite{A19, cs12,cs13}, otherwise it fails in general
see \cite{le24,le25,le27} for example.

However, one can expect partial regularity results, this is regularity away from a singular set that is in some sense small. The partial regularity for general parabolic  (\ref{1.2}) was a longstanding open problem until it was solved by Duzaar and Mingione \cite{D17},  Duzaar, Mingione and Steffen \cite{D18}, C. Scheven \cite{d23} and also Duzaar et al. \cite{D16, D3,D4}, their proofs are based on the $A-$caloric approximation method to the parabolic setting. Subsequently, Scheven \cite{d23} derived an analogous result for the subquadratic case of (\ref{1.2}). Moreover, Baroni \cite{bar} have showed the continuity of the gradient $Du$ while only assuming the Dini continuity of $a(\cdot,\cdot,Du)$.  Under the assumption of continuous coefficients, B\"{o}gelein-Duzzar-Mingione \cite{bfm} proved a partial H\"{o}lder continuity results for (\ref{1.2}) with polynomial growth.
When considering the boundary regularity of the parabolic system, the same authors \cite{D3,D4} have showed that almost every parabolic boundary point is a H\"{o}lder continuity point for $Du$. There have been many research articles on the regularity of weak solution to parabolic system, e.g., \cite{bfm1,ct,bfm6,bfm29,dm32,tz} and the reference therein.

The above result for parabolic problems are analogous of results of elliptic case (cf. \cite{m2006}), the application of the so called harmonic approximation   to prove regularity theorems goes back to Simon \cite{le22,le23} and Duzaar et al. \cite{le9, le10}. Related results for problems with
continuous coefficients, Campanato \cite{sch4} (see also \cite{mp4}) derived the H\"{o}lder continuity of the solutions of some nonlinear elliptic system in $\mathbb{R}$. In higher dimensions cases, Foss-Mingione \cite{w12} proved the partial H\"{o}lder continuity for solutions to elliptic system. The proof relies upon an iteration scheme of a decay estimate for a new type of excess functional measuring the oscillations in the solution and its gradient. Afterwards, Beck \cite{beck}  showed the boundary regularity of elliptic system with Dirichlet condition. When considering the Dini continuous coefficients, Duzzar-Gastel \cite{w8} presented a general low-order partial regularity theory.
In particular, for the system with variable exponent $p(x)$, Habermann \cite{h2013} (see also \cite{l7}) derived the partial H\"{o}lder continuity for
weak solution to a nonlinear problem with continuous growth exponent. For more details, one can also refer \cite{D2,w2,mf10,sch10,sch14,sch19,mf32} and
the reference therein.

Turning to the  technically more challenging case of (\ref{1.1}), as far as we are aware, there has been no previous work addressing partial regularity of weak solution $u$ to (\ref{1.1}) with continuous coefficients available in the literature yet (cf. \cite{bfm} for the homogeneous case (\ref{1.2})).  Thus, in present paper,  we aim to  fill a gap in the partial regularity theory of quasi-linear parabolic system (\ref{1.1}). This turns out to be a challenging task, since the nonhomogeneous term $b(z,u,Du)$ will lead to several new difficulties:
\begin{enumerate}
\item When establish the Poincar\'{e} inequality in Section \ref{se4}, we are not able to obtain (\ref{4.96}) directly, since we can not use the
zero-boundary condition on  $\partial B_{\rho}$ for any  $B_{\rho}\subset\Omega$. In order to avoid this flaw, some  iteration argument will be
introduced;
\item For prove the  Caccioppoli's inequality  (\ref{3.1}), the key point is that, bound $b(z,u,Du)$ in terms of $Du-Dl$ or $u-l$ ($l$ be an affine
function defined in later). However, one can not use the inequality $l(z)\leq l(z_0)+Dl\leq M$ directly for a.e.
$z\in Q_{\rho}$, $\rho\leq1$ and $M\geq1$ be a constant. Otherwise, the constant after (\ref{5.13}) depends on $M$ with $M=H\lambda$ (see (\ref{5.7})
and (A$_j$)). As a consequence, all constants in Lemma \ref{le5.3} depend on $\lambda$ so that the estimates could blow up during the iteration process.
At this stage, we shall use a weighted Sobolev interpolation inequality (cf. \cite{flc2,ckn2,bc2}): for suitable function
$w(\cdot):\Omega\longrightarrow\mathbb{R}_+$ satisfies
\begin{equation*}
\sup_{\mbox{\tiny
$\begin{array}{c}
x,y\in\Omega \\
|x-y|<w(y)
\end{array}$}}
\left|\ln\frac{w(x)}{w(y)}\right|<+\infty,
\end{equation*}
and for any function $v$, $k\in \mathbb{N}_0$, $r\in \mathbb{N}$, $s\in \mathbb{R}$, $p_0\in[1,+\infty)$, $p_1,q\in (1,+\infty)$,
 if $\partial^rv\in L^{p_1}_s(\Omega)$, $v\in L^{p_0}_{\alpha_0-r}(\Omega)$, $k<r$, $p_0\leq p_1\leq q$, $\frac1q>\frac{1}{p_1}-\frac{r-k}{n}$,
 $\alpha_0=s-n(\frac{1}{p_0}-\frac{1}{p_1})$,  $\alpha=s-n(\frac{1}{q}-\frac{1}{p_1})$ and
\begin{equation*}
\theta=\frac{\frac{1}{p_0}-\frac1q+\frac kn}{\frac{1}{p_0}-\frac{1}{p_1}+\frac rn},
\end{equation*}
then, there holds
\begin{equation}\label{1.3}
\|v\|_{W^{k,q}_{\alpha-(r-k)}(\Omega)}\leq c(\|\partial^rv\|^{\theta}_{L_s^{p_1}(\Omega)}\|v\|^{1-\theta}_{L^{p_0}_{\alpha_0-r}(\Omega)}
+\|v\|_{L^{p_0}_{\alpha_0-r}(\Omega)}),
\end{equation}
where $c$ depends on $p_0$, $p_1$, $n$, $q$, $r$, $k$. Here, we have defined
\begin{align*}
\|v\|_{W^{s_1,s_2}_{s}(\Omega)}:=&\sum_{|\alpha|\leq s_1}\|w(x)^{s+|\alpha|-s_1}\partial^{\alpha}v\|_{L^{s_2}(\Omega)},\\
\|v\|_{L^{k_1}_s(\Omega)}:=&\|w(x)^sv\|_{L^{k_1}(\Omega)},
\end{align*}
with $s_1\in \mathbb{N}$, $k_1,s_2\geq1$ and $s\in \mathbb{R}$.
\end{enumerate}
The main result of the present paper is stated as following
\begin{Theorem}\label{th1.1}
 Let $p\geq 2$ and $u\in C^0(-T,0;L^2(\Omega;\mathbb{R}^N))\cap L^p(-T,0;W^{1,p}(\Omega;\mathbb{R}^N))$ be a weak solution of the parabolic systems (\ref{1.1}) in $ Q_T$ under the assumptions (\ref{2.01})-(\ref{2.5}). Then, there exists an open subset $Q_0\subset Q_T $ such that
 \begin{equation*}
 |Q_T\setminus Q_0|=0 \ \ \  and \ \ \  u\in C^{0;\alpha,\alpha/2}(Q_0;\mathbb{R}^N)
 \end{equation*}
 for every $\alpha\in(0,1)$. Moreover, we have the singular set satisfies $Q_T\setminus Q_0\subset\Sigma_1\cup\Sigma_2$, where
\begin{align*}
\Sigma_1:=&\left\{z_0\in Q_T:\liminf_{\rho\longrightarrow\ 0}\Xint-_{Q_\rho (z_0)}\mid Du-(Du)_{z_0,\rho}\mid^p dz>0\right\},\\
\Sigma_2:=&\left\{z_0\in Q_T:\limsup_{\rho\longrightarrow\ 0} |(Du)_{z_0,\rho}|=\infty\right\}.
\end{align*}
\end{Theorem}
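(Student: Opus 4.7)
The plan is to implement the $A$-caloric approximation strategy of Duzaar--Mingione, adapted to accommodate the inhomogeneous term $b(z,u,Du)$. The argument turns on a decay estimate for the excess functional $\Phi(z_0,\rho):=\dashint_{Q_\rho(z_0)}|Du-(Du)_{z_0,\rho}|^p\,dz$: at any point $z_0$ where $\Phi(z_0,\rho)$ is small and $|(Du)_{z_0,\rho}|$ is bounded, one shows that the appropriately rescaled solution is close to an $A$-caloric function, whose gradient satisfies Campanato-type estimates; transferring this closeness back to the original scale gives geometric decay of $\Phi(z_0,\theta\rho)$, up to a perturbation accounting for coefficient oscillation and for $b$.

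The preparation consists of three ingredients. First, a Caccioppoli inequality for $u-l$ with $l$ an affine comparison, in which the inhomogeneous term must be absorbed without letting the constants depend on the normalisation parameter $M=H\lambda$ of the affine function (otherwise the constants blow up at each iteration, as noted in the authors' second item of difficulties). This is precisely where the weighted Sobolev interpolation inequality (\ref{1.3}) enters: it allows one to estimate terms of the form $\int b(z,u,Du)\cdot(u-l)\,dz$ by powers of $\|Du-Dl\|$ and $\|u-l\|$ uniformly in $\lambda$. Second, a Poincar\'e-type inequality on parabolic cylinders, proved by the iteration trick flagged in the introduction, since the affine comparison does not vanish on $\partial B_\rho$ and the naive zero-boundary route is unavailable. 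Third, a linearisation step: after freezing the coefficients of $a$ at $(z_0,u_{z_0,\rho},(Du)_{z_0,\rho})$, I would verify that the rescaled $u$ satisfies a system that is approximately $A$-caloric with error controlled by the modulus of continuity of $a$, and then invoke the $A$-caloric approximation lemma to produce a comparison $A$-caloric map $v$.

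With these ingredients in hand the excess decay takes the form
\begin{equation*}
\Phi(z_0,\theta\rho)\le C_*\,\theta^{2\alpha}\Phi(z_0,\rho)+C_*\rho^{2\sigma},
\end{equation*}
for any fixed $\alpha\in(0,1)$, valid whenever $|(Du)_{z_0,\rho}|\le s$ and $\Phi(z_0,\rho)\le\varepsilon$, with the parameters chosen in the standard order: first $\alpha$ and $\theta$, then the smallness threshold $\varepsilon$ and the bound $s$. Iterating this estimate along a chain of shrinking cylinders yields Morrey-type decay of $\Phi$, and a Campanato characterisation in the parabolic metric converts this into H\"older continuity of $u$ with exponent $\alpha$. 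The conditions that allow one to start the iteration at $z_0$ are precisely the failure of $z_0$ to belong to $\Sigma_1$ or $\Sigma_2$; since both are stable under small perturbations of $z_0$, the set $Q_0:=Q_T\setminus(\Sigma_1\cup\Sigma_2)$ is open, and parabolic Lebesgue differentiation applied to $Du\in L^p$ forces $|\Sigma_1|=|\Sigma_2|=0$, so $Q_0$ has full measure.

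The main obstacle I expect is keeping all the constants uniform in the rescaling parameter $\lambda$ throughout the Caccioppoli and Poincar\'e steps: the bound on $b$ produced by (\ref{1.3}) must be packaged so that the resulting exponents match both the scaling in the $A$-caloric approximation threshold $\varepsilon$ and the accounting in the Poincar\'e iteration. This bookkeeping is what distinguishes the present inhomogeneous argument from the homogeneous treatment in \cite{bfm}, and will be the most delicate technical point of the proof.
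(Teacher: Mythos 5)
Your preparatory ingredients (Caccioppoli inequality with the weighted interpolation inequality (\ref{1.3}) to absorb $b$, Poincar\'e by iteration, linearization plus the $A$-caloric approximation lemma) match the paper, but the core of your argument --- the excess-decay estimate you iterate --- does not go through under the hypotheses (\ref{2.01})--(\ref{2.5}). You iterate the gradient excess $\Phi(z_0,\rho)=\dashint_{Q_\rho(z_0)}|Du-(Du)_{z_0,\rho}|^p\,dz$ and claim $\Phi(z_0,\theta\rho)\le C_*\theta^{2\alpha}\Phi(z_0,\rho)+C_*\rho^{2\sigma}$. A perturbation of power type $\rho^{2\sigma}$ is only available when the dependence of $a$ on $(z,u)$ has a Dini or H\"older modulus; assumption (\ref{2.03}) gives only an abstract modulus $\omega$, so the linearization error is of size $\omega(\rho^2)+\omega(\Psi_\lambda)+\mu(\sqrt{E_\lambda})$, which is qualitatively small but carries no rate. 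Hence the gradient-excess iteration cannot produce Morrey--Campanato decay here, and it should not: if it did, it would yield partial H\"older continuity of $Du$, which is neither claimed nor attainable for merely continuous coefficients. The theorem asserts H\"older continuity of $u$ for every $\alpha<1$, and the paper gets it by iterating a different quantity: the normalized first-order excess
\begin{equation*}
\Phi_\lambda(\rho)=\dashint_{Q_\rho^{(\lambda)}(z_0)}\left[\frac{|u-l|^2}{\rho^2(1+|Dl|)^2}+\frac{|u-l|^p}{\rho^p(1+|Dl|)^p}\right]dz,
\end{equation*}
built on time-independent minimizing affine maps, together with the hybrid excess $E_\lambda=\Phi_\lambda+\omega(\Psi_\lambda)+\omega(\rho^2)+\rho$. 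The iteration in Lemma \ref{le5.3} only keeps $\Phi_{\lambda_j}$ below a fixed threshold $\varepsilon_1$ while allowing $1+|Dl_j|\approx\lambda_j$ to grow like $(2H)^j$; the arbitrary exponent $\alpha$ then enters through the choice (\ref{5.28}) of $\vartheta$, producing the Campanato estimate $\int_{Q_r}|u-(u)_{z_0,r}|^2\le c\,r^{n+2+2\alpha}$ for $u$ itself.

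A second, related omission is the intrinsic geometry: since $p\ge2$ the whole scheme must run on stretched cylinders $Q_\rho^{(\lambda)}$ with $\Lambda_\rho^{(\lambda)}(t_0)=(t_0-\lambda^{2-p}\rho^2,t_0)$, and at each step one needs a scaling factor $\lambda'\in[\lambda/2,2H\lambda]$ matched to the affine map via $1+|Dl^{(\lambda')}_{\vartheta\rho}|=\lambda'$ (an intermediate-value argument, (\ref{5.11})); your plan on standard cylinders omits this, and your (correct) concern about uniformity in $\lambda$ in the Caccioppoli and Poincar\'e steps does not substitute for it. Finally, a minor point: $Q_T\setminus(\Sigma_1\cup\Sigma_2)$ is not itself shown to be open; openness of the regular set comes from continuity of $z\mapsto Dl_{z,\rho}$ and $z\mapsto\Phi_1(z,\rho,l_{z,\rho})$ at a fixed radius $\rho$, so the smallness conditions of Lemma \ref{le5.3} persist on a full neighborhood of any $z_0\notin\Sigma_1\cup\Sigma_2$; your Lebesgue-differentiation argument for $|\Sigma_1\cup\Sigma_2|=0$ is fine.
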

The main technique we have used in the proof of Theorem \ref{th1.1} is the $A-$caloric approximation lemma. Here, $A$ is a bilinear form on
$\mathbb{R}^{Nn}\times\mathbb{R}^{Nn}$ with constant coefficients. If $A$ satisfies certain growth and ellipticity conditions, then the weak solution $h$
to (\ref{5.05}) is $A-$caloric and have nice decay properties. In order to look for such `good' function, we shall use the $A-$caloric approximation lemma
(cf. Lemma \ref{le2.3}), from which, we can transfer the property of $A-$caloric to some `bad' function (target function).
When applying the $A-$caloric approximation lemma, we need to pay attention to three necessary conditions: i) the target function is bounded from above on
the scale of $L^2$-norm and $L^p$-norm; ii) the target function satisfies a linearized system; iii) the target function satisfies the smallness condition
in the sense of distribution. To satisfy these three conditions, we will establish the Caccioppoli inequality and linearize the system (\ref{1.1}) in Sec.
\ref{se3} and Sec. \ref{se5}, respectively. On the other hand, with the help of linearization lemma (cf. Lemma \ref{le5.1}), we would like to show
$w:=u-l_{\rho}$ approximately solves
\begin{equation*}
\Xint-_{Q_{\frac{\rho}{2}}(z_0)}w\cdot\varphi_t-(\partial_Fa(z_0,l_{\rho}(x_0),Dl_{\rho})Dw,D\varphi)dz=0
\end{equation*}
for all $\varphi\in C_0^{\infty}(Q_{\rho}(z_0);\mathbb{R}^N)$. Here, $l_{\rho}: B_{\rho} \longrightarrow \mathbb{R}^N$ be the unique time independent affine map minimizing $l\mapsto \Xint-_{Q_{\rho}(z_0)}|u-l|^2dz$. At this stage,  by the $A-$ caloric approximation lemma, then we can establish smallness
of the first order excess functional
\begin{equation*}
  \Phi(z_0,l,\rho):=\Xint-_{Q_{\rho}(z_0)}\left|\frac{u-l}{\rho}\right|^2+\left|\frac{u-l}{\rho}\right|^pdz.
\end{equation*}
From which, we are able to measure the oscillation in $u$ with respect to an affine mapping. Moreover, in order to  provide a bilinear form that satisfies the growth and ellipticity bounds needed to apply the $A-$caloric approximation lemma, we may need  the integral estimate on
intrinsic  cylinders, that is, parabolic cylinders stretched according to the size of the solution $u$ itself. The rough asymptotic is given by
\begin{equation*}
\left|\frac{1}{|Q_{\rho}^{(\lambda)}(z_0)|}\int_{Q_{\rho}^{(\lambda)}(z_0)} udz\right|\approx\lambda,
\end{equation*}
 with $Q_{\rho}^{(\lambda)}:B_\rho(x_0)\times(t_0-\lambda^{2-p}\rho^2,t_0)$.

According to  Theorem \ref{th1.1}, we immediately deduce that
\begin{equation}\label{x2.3}
\omega(d(z,z_0)^2+|u-u_0|^2)\lesssim d(z,z_0)+1_{K},
\end{equation}
where $K:=\Sigma_1\cup\Sigma_2$ and  $1_{K}=1$ for $x\in K$, otherwise,  $1_{K}=0$. Then we have the following result.
\begin{Theorem}\label{th1.2}
Let the all assumptions in Theorem \ref{th1.1} be verified and $p=2$. Then, for any $\theta\in (0,\frac{1}{3})$, $\alpha\in (0,\frac{1}{2}]$, there holds
$Du \in W^{\alpha,\theta;2}_{loc}(Q_T)$. Furthermore, the   singular set $\Sigma_1$ and $\Sigma_2$ in Theorem \ref{th1.1} satisfying \begin{equation}\label{hd1.7}
 \text{dim}\mathcal {H}(\Sigma_1)\leq n+2-2\gamma,\ \ \ \ \   \text{dim}\mathcal {H}(\Sigma_2)\leq n+2-2\gamma,
\end{equation}
where $\gamma\leq\alpha,$ $ \frac{\gamma}2\leq\theta$.
\end{Theorem}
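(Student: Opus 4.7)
The proof splits into two independent parts: first, one establishes the Nikolskii-type fractional differentiability $Du\in W^{\alpha,\theta;2}_{loc}(Q_T)$; second, one converts this information into a parabolic Hausdorff dimension estimate for the singular set $\Sigma_1\cup\Sigma_2$. Both rest on the pointwise modulus bound (\ref{x2.3}), which is already a consequence of Theorem \ref{th1.1}, and, crucially, on the weighted interpolation inequality (\ref{1.3}) introduced in the Introduction. The restriction $p=2$ makes the leading operator quadratic, so a difference-quotient strategy can be carried out with the linearized matrix $\partial_F a$ playing the role of constant-coefficient operator.

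For the spatial fractional differentiability, I would fix a parabolic cylinder $Q_{2\rho}(z_0)\subset Q_T$ together with a shift $h\in\mathbb{R}^n$ with $|h|<\rho/4$, then apply the spatial difference operator $\tau_{h,x}$ to (\ref{1.1}) to get
\begin{equation*}
\partial_t(\tau_{h,x}u)-\operatorname{div}\bigl(A_h(z)\tau_{h,x}Du\bigr)=\operatorname{div} R_h+\tau_{h,x}b(z,u,Du),
\end{equation*}
where $A_h$ is the bounded, uniformly elliptic bilinear form arising from a Taylor expansion of $a$ in its $F$-variable and $R_h$ collects the remainder involving the $(z,u)$-modulus of continuity $\omega$. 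Testing against $\eta^2\tau_{h,x}u$ for a standard cut-off and applying (\ref{x2.3}) to control $R_h$ in $L^2$ (the $1_K$-contribution integrates to zero since $|K|=0$), together with the Caccioppoli inequality of Section \ref{se3} to bound the full $L^2$-norm of $Du$, yields the Nikolskii estimate $\int_{Q_\rho(z_0)}|\tau_{h,x}Du|^2\,dz\leq C|h|^{2\alpha}$ for every $\alpha\in(0,\tfrac12]$, which is the required spatial regularity.

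For the time fractional differentiability, I would first derive the preliminary bound $\|\tau_{h,t}u\|_{L^2}\lesssim |h|^{1/2}$ by a Steklov-averaging argument in the equation, i.e.\ $u\in H^{1/2}_{t,loc} L^2_x$. To transfer the regularity onto $Du$ without a full $1/2$-loss, I would then interpolate between the two bounds $\|\tau_{h,t}u\|_{L^2}\lesssim |h|^{1/2}$ and $\|D\tau_{h,t}u\|_{L^2}\lesssim 1$ via the weighted inequality (\ref{1.3}) specialized to $r=1$, $k=0$, $p_0=p_1=q=2$, which here plays the role of a parabolic Gagliardo--Nirenberg estimate. A direct computation shows that the admissible exponent for $\tau_{h,t}Du$ in $L^2$ is any $\theta<1/3$; the mismatch between one time derivative of $u$ and one extra spatial derivative of $u$ is precisely what dictates the threshold $1/3$, rather than the naive $1/2$.

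Finally, once $Du\in W^{\alpha,\theta;2}_{loc}$ is in hand, I would invoke the standard parabolic principle relating fractional differentiability to Hausdorff dimension (the Mingione-type analogue of the classical Giaquinta--Giusti argument in the parabolic metric): for every $\gamma\leq\alpha$ satisfying $\gamma/2\leq\theta$, the set of points at which $Du$ fails to be an $L^2$-Lebesgue point has parabolic Hausdorff dimension at most $n+2-2\gamma$. Both $\Sigma_1$ and $\Sigma_2$, as characterized in Theorem \ref{th1.1}, are contained in this non-Lebesgue set, which yields (\ref{hd1.7}). The main obstacle is the middle step: extracting the sharp threshold $\theta<1/3$ and cleanly controlling the nonhomogeneous term $b(z,u,Du)$ through the Steklov averaging both require a careful choice of parameters in the weighted interpolation (\ref{1.3}), whereas the Hausdorff step is essentially automatic once the fractional regularity of $Du$ is secured.
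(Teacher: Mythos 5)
Your overall architecture (Nikolskii estimates in space and time for $Du$, then the standard passage from fractional differentiability of $Du$ to the parabolic Hausdorff bound via the non-Lebesgue-point sets, which is exactly the paper's Lemma \ref{hdle2.3} applied with $v=Du$, $\beta=\gamma$, $\gamma\leq\alpha$, $\gamma/2\leq\theta$) matches the paper, and your spatial step is essentially the paper's Lemma in Section \ref{se6} (spatial difference quotients tested against $\phi\,\tau_h u$, ellipticity of $\partial_F a$, the modulus $\omega$ controlled through (\ref{x2.3})). The preliminary time estimate $\|\tau^h u\|_{L^2}^2\lesssim |h|$ is also the paper's Lemma \ref{hdle3.1}. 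The gap is in your middle step, where the threshold $\theta<\tfrac13$ is supposed to come from interpolating $\|\tau^h u\|_{L^2}\lesssim|h|^{1/2}$ against $\|D\tau^h u\|_{L^2}\lesssim 1$ via (\ref{1.3}). This cannot work: (\ref{1.3}) requires $k<r$, so with $r=1$ it only controls zeroth-order (i.e.\ $L^q$-type) norms of $v=\tau^h u$ on the left, never $\|\tau^h Du\|_{L^2}$; and any interpolation of the two stated bounds at the $H^1$ level carries the factor $|h|^{(1-s)/2}$ with $s=1$, i.e.\ no positive power of $|h|$ at all. No purely functional-analytic interpolation between those two inequalities can produce decay for the top-order quantity $\int|\tau^h Du|^2$, let alone single out the exponent $2/3$.

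What is actually needed — and what the paper does — is to use the equation a second time: test the Steklov-averaged system with $\tau^{h}\bigl(\eta^2\xi^2\chi_{\varepsilon}\tau^{-h}u\bigr)$ and exploit the monotonicity (\ref{hd2.5}) to get the energy estimate $\int|\tau^{-h}Du|^2\eta^2\,dz\lesssim|h|^{1/2}$ (Lemma \ref{hdle3.2}), where the $|h|^{1/2}$ comes from Cauchy--Schwarz against the $|h|$-bound for $\|\tau^{-h}u\|_{L^2}^2$, not from interpolation. A single application only yields time-order below $\tfrac14$; the threshold $\tfrac13$ then comes from a bootstrap: the improved bound on $\tau^h Du$ upgrades the term $\int[a(z,u,Du)]_h\cdot\eta^2 D\tau^h u$ in the $u$-estimate, giving $\|\tau^h u\|_{L^2}^2\lesssim|h|^{1+1/4}$, which feeds back into the $Du$-estimate, and the iteration $a_k=\tfrac12\bigl(1+\tfrac{a_{k-1}}{2}\bigr)$, $a_0=\tfrac12$, converges to $a_k\to\tfrac23$, i.e.\ $\int|\tau^h Du|^2\lesssim|h|^{2/3-\epsilon}$ and hence any $\theta<\tfrac13$ after invoking Proposition \ref{pr2.1}. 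Without this re-use of the equation and the iteration, your argument establishes the spatial exponent $\alpha\leq\tfrac12$ but not the time exponent, so the claimed membership $Du\in W^{\alpha,\theta;2}_{loc}$ for all $\theta<\tfrac13$, and therefore the stated range of $\gamma$ in (\ref{hd1.7}), is not proved.
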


The rest of paper is organised as follows. First of all, in Sec. \ref{se2} we state some assumption of the structure function $a(\cdot)$ and the inhomogeneity term $b(\cdot)$. Moreover, we present some notation, definition of weak solution to (\ref{1.1}), and some useful lemma which will be used in our proof. Next, we provide some preliminary  material in Sec. \ref{se3} and Sec. \ref{se4}, which will be quite useful in the proof of main result. The first step of our proof is to establish a Caccioppoli's type  inequality.   Subsequently, we establish a Poincar\'{e} type inequality  in Section \ref{se4}, which is useful to show the boundness of $|Dl|$, with $l$ be an affine function. In Sec. \ref{se5}, we first provide a linearization strategy for context, we show a decay estimate of $\Phi_{\lambda_j}(\vartheta^j\rho)$,  and then obtain a Campanato type estimate. This, combined with
a standard argument implies the Theorem \ref{th1.1}. Finally,  in Sec. \ref{se6},   we derive the fractional time and space differentiability of $Du$,
from which, we  estimate  the Hausdorff dimension  of singular set  of weak solution $u$ to (\ref{1.1}).
\section{preliminaries}\label{se2}
\subsection{Notation} Let $x_0\in \mathbb{R}^n$, $t_0\in \mathbb{R}$, $z_0=(x_0,t_0)$, we denote
\begin{equation*}
  B_\rho(x_0):=\{x\in \mathbb{R}^n:|x-x_0|<\rho\}
\end{equation*}
as an open ball in $\mathbb{R}^n$, and let
\begin{equation*}
  Q_\rho(x_0):={B_\rho(x_0)\times(t_0-\rho^2,t_0)\equiv B_\rho(x_0)\times\Lambda_\rho(t_0)}
\end{equation*}
as a cylinder in $\mathbb{R}^{n+2}$. Let $B_\rho(x_0)$, $Q_\rho(z_0)\subset Q_T$, and $f(x,t)$ integrable on $B_\rho(x_0)$ and  $Q_\rho(z_0)$, then the average integral of $f$ over $B_\rho(x_0)$ and  $Q_\rho(z_0)$ are defined by
\begin{equation*}
(f)_{x_0,\rho}=\Xint-_{B_\rho(x_0)} fdx=\frac{1}{|B_\rho(x_0)|}\int_{B_\rho(x_0)} fdx,
\end{equation*}
and
\begin{equation*}
(f)_{z_0,\rho}=\Xint-_{Q_\rho(z_0)} fdz=\frac{1}{|Q_\rho(z_0)|}\int_{Q_\rho(z_0)} fdz.
\end{equation*}
In what follows, we shall repeatly use the scaled parabolic cylinders of the form
\begin{equation*}
  Q_{\rho}^{(\lambda)}(z_0)=B_\rho(x_0)\times\Lambda_{\rho}^{(\lambda)}(t_0)
\end{equation*}
with radius $\rho>0$, scaling factor $\lambda>0$, and
\begin{equation*}
\Lambda_{\rho}^{(\lambda)}(t_0):=(t_0-\lambda^{2-p}\rho^2,t_0).
\end{equation*}
In particular, when $\lambda=1,$ then  $Q_{\rho}^{(1)}(z_0)\equiv Q_{\rho}(z_0)$, and $\Lambda_{\rho}^{(1)}(t_0)\equiv \Lambda_{\rho}(t_0)$. Furthermore, The parabolic metric is defined as usual by
\begin{equation*}
  d(z,z_0)=:max\left\{|x-x_0|,\sqrt{|t-t_0|}\right\}\approx\sqrt{|x-x_0|^2+|t-t_0|}.
\end{equation*}
Based on the parabolic metric, the space $C^{k;\alpha_1,\alpha_2}(Q_T)$ are those of functions $u\in C^k(Q_T)$ which  are $\alpha_1$-H\"{o}lder continuous in the space variables  $\alpha_2$-H\"{o}lder continuous in the time variables. More precisely,  we call $u\in C^{k;\alpha,\alpha/2}(\Omega_T;\mathbb{R}^N)$ ($k\geq0$ be an integer), if
\begin{equation*}
  u\in C^{k;\alpha,\alpha/2}(Q_T;\mathbb{R}^N):=\left\{v\in C^k(Q_T;\mathbb{R}^N): \sup_{z,z_0\in Q_T; z\neq z_0}\frac{|v(z)-v(z_0)|}{d(z,z_0)^\alpha}<\infty\right\}.
\end{equation*}
We said $u\in C_{loc}^{k;\alpha,\alpha/2}(Q_T;\mathbb{R}^N)$ if and only if for all $ A\subset Q_T$, there holds $u\in C^{k;\alpha,\alpha/2}(A;\mathbb{R}^N)$. Finally, we note that in the whole paper, we use the notation $(\cdot,\cdot)$ denote the inner product.

For $s\in[0,n+2]$ and $E\subset \mathbb{R}^{n+1}$, we define the (parabolic) Hausdorff measure:
\begin{equation*}
\mathcal {H}_{s}^{\delta}(E):=\inf\left\{\sum_{i=1}^{\infty}r_i^s:\ E\subset\bigcup_{i=1}^{\infty}Q_{r_i}(x_i,t_i),r_i\leq\delta\right\},
\ \ \   \mathcal {H}_{s}(E):=\sup_{\delta>0}\mathcal {H}_{s}^{\delta}(E).
\end{equation*}
From above, then the Hausdorff dimension  is usually defined by
\begin{equation*}
\text{dim}\mathcal {H}(E):=\inf\left\{s>0:\mathcal {H}_{s}(E)=0\right\}=\sup\left\{s>0:\mathcal {H}_{s}(E)=\infty\right\}.
\end{equation*}
Moreover, in this paper we use $D$ or $\nabla$ denotes the `gradient', and we will use the following natation:
\begin{align*}
&\tau^hv(x,t):=v(x,t+h)-v(x,t),\\
&\tau^h_{1,2}a(z,u,Du):=a((x,t+h),u(x,t+h),Du)-a(z,u,Du)\\
&\tau^h_{3}a(z,u,Du):=a(z,u,Du(x,t+h))-a(z,u,Du)\\
& \tau_h v(x,t)\equiv \tau_{h,i}v(x,t):=v(x+he_i,t)-v(x,t),\\
&\tau_h^{1,2}a(z,u,Du)\equiv\tau_{h,i}^{1,2}a(z,u,Du):=a((x+he_i,t),u(x+he_i,t),Du)-a(z,u,Du)\\
&\tau_h^{3}a(z,u,Du)\equiv\tau_{h,i}^{3}a(z,u,Du):=a(z,u,Du(x+he_i,t))-a(z,u,Du)\\
&\triangle_h v(x,t)\equiv \triangle_{h,i}v(x,t):=\frac{v(x+he_i,t)-v(x,t)}{h}.
\end{align*}
Here $e_i=(0,\cdots0,1_{i-th},0,\cdots,0)$, $i=1,\cdots,n$. Finally, let us recall the definition of parabolic fractional Sobolev space (refer to
\cite{le17} for details). We say $u\in L^2(Q_T)$
belongs to the fractional Sobolev space $W^{\alpha,\theta;2}(Q_T)$, $\alpha, \theta\in (0,1)$, if
\begin{equation*}
\int_{-T}^{0}\int_{\Omega}\int_{\Omega}\frac{|u(x,t)-u(y,t)|^2}{|x-y|^{n+2\alpha}}dxdydt+\int_{\Omega}  \int_{-T}^{0}  \int_{-T}^{0}
\frac{|u(x,t)-u(x,s)|^2}{|t-s|^{1+2\theta}}dtdsdx=:[u]_{\alpha,\theta;Q_T}<\infty.
\end{equation*}
\subsection{ Assumption on the structure function $a(\cdot)$ and $b(\cdot)$}
 In the following, we impose the condition on the structure function $a(z,u,F)$ and $b(z,u,F)$ for $p\geq 2$.
\begin{itemize}
  \item The growth condition
\begin{equation}\label{2.01}
  |a(z,u,F)|+(1+|F|)|\partial_F a(z,u,F)|\leq L(1+|F|)^{p-1},
\end{equation}
  with $(z,u,F)\in Q_T\times {\mathbb{R}^N} \times \mathbb{R}^{Nn}, L\geq 1$ be a constant.
  \item The ellipticity condition
\begin{equation}\label{2.02}
  \partial_F a(z,u,F)(\widetilde{F},\widetilde{F})\geq\nu (1+|F|)^{p-2}|\widetilde{F}|^2,
\end{equation}
for all $(z,u,F)\in Q_T\times \mathbb{R}^N \times \mathbb{R}^{Nn}$, $\widetilde{F}\in \mathbb{R}^{Nn}$, $0<\nu\leq 1\leq L$ be a constant.

Moreover,  we also need the following two continuity conditions:
 \item Continuity of lower order term
 \begin{equation}\label{2.03}
   |a(z,u,F)-a(z_0,u_0,F)|\leq L \omega\left(d(z,z_0)^2+|u-u_0|^2\right)(1+|F|)^{p-1},
\end{equation}
\item  Continuity of higher order term
\begin{equation}\label{2.04}
 |\partial_F a(z,u,F)-\partial_F a(z,u,F_0)|\leq L\mu\left(\frac{|F-F_0|}{1+|F|+|F_0|}\right)(1+|F|+|F_0|)^{p-2},
\end{equation}
for all $z,z_0\in Q_T$, $u,u_0\in \mathbb{R}^N$ and $F,F_0\in \mathbb{R}^{Nn}$. Here, $\omega,\ \mu:[0,\infty)\rightarrow[0,1]$ are two bounded, concave, and non-decreasing functions satisfy
\begin{equation*}
  \lim_{s\rightarrow0}\mu(s)=\lim_{s\rightarrow0}\omega(s)=0.
\end{equation*}
The term  $b(z,u,F)$ satisfies  the following  growth conditions:
\item Controllable growth condition
\begin{equation}\label{2.5}
  |b(z,u,F)|\leq L(1+|F|)^{p-1}+|u|^{q_1},
\end{equation}
for all $(z,u,F)\in Q_T\times \mathbb{R}^N \times \mathbb{R}^{Nn}$ with $q_1\in[0, \frac{(n+2)(p-1)}{n}]$,  where the  upper bound of $q_1$ depends on the Ladyzhenskaya inequality.
\end{itemize}
\subsection{Definition of weak solution}
 Let $p\geq2$, we call $u\in C^0(-T,0;L^2(\Omega;\mathbb{R}^N))\cap L^p(-T,0;W^{1,p}(\Omega;\mathbb{R}^N))$ is a weak solution to (\ref{1.1}), if and only if the following identity
 \begin{equation}\label{2.1}
 \int_{Q_T} u\cdot\varphi_t-a(z,u,Du)\cdot D\varphi+b(z,u,Du)\cdot\varphi dz=0,
\end{equation}
holds for all $\varphi\in C_{0}^{\infty}(Q_T;\mathbb{R}^N)$.

From \cite{A19} (see also \cite{le17}) we recall the definition of the Steklov averages that allow us to restate (\ref{2.1}) in an equivalent
way. Let $v\in L^1(Q_T)$  and $0< h<T$, the Steklov averages $v_{h}$ and $v_{\bar{h}}$ are defined by
\begin{align*}
v_h(x,t):=\begin{cases}
 \frac{1}{h}\int_{t}^{t+h}v(x,s)ds& t\in (-T,-h), \\
0&t>-h,
\end{cases}
\end{align*}
and
\begin{align*}
v_{\bar{h}}(x,t):=\begin{cases}
 \frac{1}{h}\int_{t-h}^{t}v(x,s)ds& t\in (-T+h,0), \\
0&t<-T+h,
\end{cases}
\end{align*}
respectively, for all $t\in (-T,0)$. We note that if $v\in L^r(-T,0;L^q(\Omega))$ with $r,q\geq 1$, then $v_h\longrightarrow v$ in
$L^r(-T+\varepsilon,0;L^q(\Omega))$
as $h\longrightarrow 0$, for every $t\in (-T+\varepsilon,0)$ and $\varepsilon\in (0,T)$, and the same result holds for $v_{\bar{h}}$.

In virtue of  the convergence properties of the Steklov averages, then we have a  equivalent definition of weak solution to (\ref{1.1}):
\begin{Definition}\label{de2.2}
(A equivalent definition of weak solution).\  Let $2\leq p<\infty$ and $u_0\in L^2(\Omega)$. Then $u\in L^{\infty}(-T,0;L^2(\Omega))\cap L^p(-T,0;W^{1,p}(\Omega))$ is called a weak solution to (\ref{1.1}) if
\begin{equation}\label{hd2.2}
\int_{\Omega}\partial_tu_h\cdot\varphi +[a(z,u,Du)]_h\cdot D\varphi-[b(z,u,Du)]_h\cdot \varphi dx=0.
\end{equation}
holds for all $\varphi\in C_{0}^{\infty}(Q_T;\mathbb{R}^N)$.
\end{Definition}
Employing ($H_1$) and ($H_2$), we have the following result.
\begin{Lemma}\label{hdle2.1}
Let $2\leq p<\infty$, then there exists a constant $c=c(L,n,p)>0$  such that for any $F_1, F_2\in \mathbb{R}^{Nn}$, it holds that
\begin{equation*}
 c^{-1}(1+|F_1|^2+|F_2|^2)^{\frac{p-2}{2}}|F_1-F_2|^2\leq |a(z,u,F_1)-a(z,u,F_2)|^2\leq c(1+|F_1|^2+|F_2|^2)^{\frac{p-2}{2}}|F_1-F_2|^2.
\end{equation*}
\end{Lemma}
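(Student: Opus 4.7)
The plan is to reduce both inequalities to a single one-parameter integration of $\partial_F a$ along the segment joining $F_2$ to $F_1$, and then sandwich the integrand using the growth bound (\ref{2.01}) and the ellipticity (\ref{2.02}). Setting $F_s := F_2 + s(F_1-F_2)$ for $s \in [0,1]$, the fundamental theorem of calculus will give the vector-valued identity
\begin{equation*}
a(z,u,F_1)-a(z,u,F_2) = \int_0^1 \partial_F a(z,u,F_s)(F_1-F_2)\,ds,
\end{equation*}
which drives both halves of the estimate.

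For the upper bound I would take absolute values in the identity above, insert the pointwise bound $|\partial_F a(z,u,F)| \le L(1+|F|)^{p-2}$ extracted from the second summand of (\ref{2.01}), and note that since $p \ge 2$ the function $t \mapsto (1+t)^{p-2}$ is monotone nondecreasing. Combined with $|F_s| \le |F_1|+|F_2|$ and the elementary equivalence $(1+|F_1|+|F_2|)^2 \approx 1+|F_1|^2+|F_2|^2$, this should immediately yield the right-hand inequality with constant depending only on $L$ and $p$.

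For the lower bound I would pair the integral identity with $F_1-F_2$, apply Cauchy--Schwarz on the left and ellipticity (\ref{2.02}) on the right, obtaining
\begin{equation*}
|a(z,u,F_1)-a(z,u,F_2)|\,|F_1-F_2| \ge \nu\,|F_1-F_2|^2 \int_0^1 (1+|F_s|)^{p-2}\,ds.
\end{equation*}
The remaining and only delicate step, which I expect to be the main obstacle, is the auxiliary inequality
\begin{equation*}
\int_0^1 (1+|F_s|)^{p-2}\,ds \ge c(p)\bigl(1+|F_1|^2+|F_2|^2\bigr)^{(p-2)/2}.
\end{equation*}
The subtlety is that $|F_s|$ can vanish at interior points of the segment (for instance if $F_2 \approx -F_1$), so a pointwise lower bound on the integrand is impossible. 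I would handle this by assuming without loss of generality $|F_1| \ge |F_2|$ and using the triangle inequality $|F_s| \ge |F_1| - s|F_1-F_2| \ge |F_1| - 2s|F_1| \ge |F_1|/2$ on the subinterval $s \in [0,1/4]$; integrating only over this subinterval and exploiting monotonicity of $(1+t)^{p-2}$ for $p \ge 2$ should recover the required weight $(1+|F_1|^2+|F_2|^2)^{(p-2)/2}$, closing the lower bound and completing the proof.
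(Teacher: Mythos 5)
Your argument is correct and follows essentially the route the paper intends: the lemma is stated there without proof as a direct consequence of (\ref{2.01})--(\ref{2.02}), and the one delicate point you isolate --- the lower bound for $\int_0^1(1+|F_2+s(F_1-F_2)|)^{p-2}\,ds$ --- is precisely what the paper's Lemma \ref{le2.1} (Campanato's estimate) supplies, which you simply re-derive by hand in one-sided form. The only slip is bookkeeping: with your parametrization $F_s=F_2+s(F_1-F_2)$ and the normalization $|F_1|\ge|F_2|$, the correct bound is $|F_s|\ge|F_1|-(1-s)|F_1-F_2|\ge|F_1|/2$, valid on $s\in[3/4,1]$ rather than $[0,1/4]$ (equivalently, swap the roles of $F_1$ and $F_2$); this is harmless and your conclusion stands.
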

Next, the following lemma as an auxiliary tool will be heavily used (cf. \cite{A12}).
\begin{Lemma}\label{le2.1}
Let $A,B\in \mathbb{R}^k$, $k\geq 1$ and $\sigma> -1$, then there exists a constant $c=c(\sigma)$, such that
\begin{equation*}
c^{-1}(1+|A|+|B|)^\sigma \leq\int_{0}^{1}(1+|A+sB|)^\sigma ds \leq c(1+|A|+|B|)^\sigma.
\end{equation*}
\end{Lemma}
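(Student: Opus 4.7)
The plan is to prove both inequalities simultaneously for every $\sigma>-1$ by dichotomising on the sign of $\sigma$ and then, in the non-trivial case, on the ratio of $|A|$ and $|B|$. In each regime one direction is immediate: since $|A+sB|\leq|A|+|B|$ by the triangle inequality, monotonicity of $t\mapsto(1+t)^\sigma$ yields the upper bound with $c=1$ when $\sigma\geq 0$ and the lower bound with $c=1$ when $-1<\sigma<0$, by integrating in $s$. So I only need to treat the lower bound when $\sigma\geq0$ and the upper bound when $-1<\sigma<0$.

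For both ``hard'' directions my plan is to split on whether $|A|\geq 2|B|$. In that regime the reverse triangle inequality gives $|A+sB|\geq|A|/2$ uniformly on $[0,1]$, while $1+|A|+|B|\leq \tfrac{3}{2}(1+|A|)$, so $(1+|A+sB|)$ is pointwise comparable to $(1+|A|+|B|)$ with an absolute ratio, and both inequalities follow on integrating. In the opposite regime $|A|<2|B|$, the quantity $|B|$ is comparable to $|A|+|B|$, and I would exploit the elementary geometric identity $|A+sB|^2=|A_\perp|^2+(s-s^*)^2|B|^2$, where $s^*=-(A\cdot B)/|B|^2$ parametrises the point on the line $s\mapsto A+sB$ closest to the origin; in particular $|A+sB|\geq|s-s^*||B|$.

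For $\sigma\geq 0$ this last inequality gives the sublevel-set bound $|\{s\in[0,1]:|A+sB|<\delta\}|\leq 2\delta/|B|$, so choosing $\delta$ as a small fixed multiple of $|A|+|B|\asymp|B|$ forces at least half of $[0,1]$ to satisfy $|A+sB|\gtrsim|A|+|B|$, and the required lower bound on $\int_0^1(1+|A+sB|)^\sigma\,ds$ follows. For $-1<\sigma<0$ the same inequality yields $(1+|A+sB|)^\sigma\leq(1+|s-s^*||B|)^\sigma$, and the substitution $u=(s-s^*)|B|$ reduces the integral to $\frac{1}{|B|}\int_{-s^*|B|}^{(1-s^*)|B|}(1+|u|)^\sigma\,du$, which is evaluated explicitly and dominated by a multiple of $\frac{1}{|B|(\sigma+1)}\bigl[(1+|B|)^{\sigma+1}-1\bigr]$; a direct comparison (splitting according to whether $|B|\leq 1$ or $|B|>1$) identifies this with $(1+|B|)^\sigma\asymp(1+|A|+|B|)^\sigma$.

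The principal obstacle is this final explicit estimate, where the constant produced is of order $\tfrac{1}{\sigma+1}$ and blows up as $\sigma\searrow-1$; this is precisely what makes the hypothesis $\sigma>-1$ sharp and must be tracked carefully through the case analysis. Everywhere else the constants are absolute, and collating the four subcases yields the claimed two-sided equivalence with a constant $c=c(\sigma)$, as asserted.
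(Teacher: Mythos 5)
The paper itself offers no proof of this lemma: it is quoted as a known auxiliary result with a citation to Campanato \cite{A12}, so there is no internal argument to compare yours against. Judged on its own, your proposal is correct and complete in outline: the ``easy'' direction in each sign regime follows from $|A+sB|\leq|A|+|B|$ and monotonicity exactly as you say; in the regime $|A|\geq 2|B|$ the pointwise two-sided comparability $\tfrac13(1+|A|+|B|)\leq 1+|A+sB|\leq 1+|A|+|B|$ settles both hard directions; and in the regime $|A|<2|B|$ (where necessarily $B\neq 0$ and $|B|\asymp|A|+|B|$) the identity $|A+sB|^2=|A_\perp|^2+(s-s^*)^2|B|^2$ gives $|A+sB|\geq|s-s^*|\,|B|$, from which your sublevel-set bound handles $\sigma\geq 0$ and the substitution $u=(s-s^*)|B|$, together with the observation that the integral of $(1+|u|)^\sigma$ over any interval of length $|B|$ is at most $\tfrac{2}{\sigma+1}\bigl[(1+|B|)^{\sigma+1}-1\bigr]$, handles $-1<\sigma<0$; your final comparison splitting $|B|\leq 1$ and $|B|>1$ checks out, and you correctly identify that the only $\sigma$-dependence forcing $\sigma>-1$ is the factor $(\sigma+1)^{-1}$. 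This is essentially the classical argument behind Campanato's lemma, so nothing further is needed beyond writing out the routine estimates you have indicated.
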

As a consequence, from Lemma \ref{le2.1} and (\ref{2.02}), it follows that the monotonicity of $a(z,u,\cdot)$:
\begin{equation}\label{hd2.5}
(a(z,u,F_1)-a(z,u,F_2))\cdot (F_1-F_2)\geq c(|F_1|+|F_2|)^{p-2}|F_1-F_2|^2,\ \ \   p\geq2,
\end{equation}
where $c=c(n,p,\nu)$.

In the next proposition we recall the parabolic version of the well known relation between Nikolski spaces and Fractional Sobolev spaces (cf. \cite{ok}).
\begin{Proposition}\label{pr2.1}
Let $u\in L^2(Q_T;\mathbb{R}^N)$, suppose
\begin{equation*}
\int_{Q'}|u(x,t+h)-u(x,t)|^2dxdt\leq c_1|h|^{2\theta},\quad \theta\in (0,1)
\end{equation*}
where $Q':=\Omega'\times (-T+\delta,-\delta)$ and $\Omega'\subset\subset \Omega$ for every $h\in\mathbb{R}$, such that $|h|\leq \min
\left\{\delta,1\right\}$ with $\delta\in (0,\frac{T}{8})$.Then there exists a constant $c'=c'(\theta,\gamma,\delta,\|u\|_{L^2(Q_T)})>0$ such that
\begin{equation*}
  \int_{\Omega'}\int_{-T+\delta}^{-\delta}\int_{-T+\delta}^{-\delta}\frac{|u(x,t)-u(x,s)|^2}{|t-s|^{1+2\gamma}}dtdsdx\leq c',
\end{equation*}
for all $\gamma\in (0,\theta)$. Furthermore, suppose that
\begin{equation*}
  \int_{Q'}|u(x+he_s,t)-u(x,t)|^2dxdt\leq c_2|h|^{2\theta},\ \ \ \ \ \ \theta\in (0,1)
\end{equation*}
for every $|h|\leq \min\{dist(\Omega',\partial\Omega),1\}$, $s\in\{1,\cdots,n\}$, with $\{e_s\}_{s=1}^n $ is the standard  basis of $\mathbb{R}^n$.
Then, for every $\tilde{\Omega}\subset\subset \Omega'$ there exists a constants $c''=c''(\delta,\theta,\gamma, c_2, dist(\Omega',\partial\Omega),dist(\partial\Omega',\tilde{\Omega}),\|u\|_{L^2(Q_T)} )$ such that
\begin{equation*}
\int_{-T+\delta}^{-\delta}\int_{\tilde{\Omega}}\int_{\tilde{\Omega}}\frac{|u(x,t)-u(y,t)|^2}{|x-y|^{n+2\gamma}}dxdydt\leq c'',
\end{equation*}
for all $\gamma\in (0,\theta)$.
\end{Proposition}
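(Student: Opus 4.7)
The plan is to establish both assertions by splitting the double integrals into a region where the increment is small (so the Nikolski-type hypothesis applies) and a complementary region where the increment is bounded away from zero (where the global $L^2$-bound on $u$ suffices).

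For the temporal statement I fix $t\in(-T+\delta,-\delta)$ and change variables $h:=s-t$, decomposing the $s$-integration into $\{|h|\leq\min\{\delta,1\}\}$ and its complement inside $(-T+\delta,-\delta)-t$. On the small-increment part Fubini together with the hypothesis yields
\begin{equation*}
\int_{\Omega'}\int_{-T+\delta}^{-\delta}\int_{\{|h|\leq \min\{\delta,1\}\}}\frac{|u(x,t+h)-u(x,t)|^2}{|h|^{1+2\gamma}}\,dh\,dt\,dx \lesssim c_1\int_{0}^{\min\{\delta,1\}} h^{2(\theta-\gamma)-1}\,dh,
\end{equation*}
which is finite precisely because $\gamma<\theta$. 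On the complementary part the trivial bound $|u(x,t)-u(x,s)|^2\leq 2|u(x,t)|^2+2|u(x,s)|^2$ combined with $\int_{\min\{\delta,1\}}^{T}|h|^{-(1+2\gamma)}\,dh<\infty$ produces a contribution controlled by $\delta^{-2\gamma}\|u\|_{L^2(Q_T)}^2$. Summing the two regions gives the claimed $c'$.

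The spatial statement is handled analogously but requires one extra step, since the hypothesis supplies control only in the coordinate directions $e_1,\ldots,e_n$ whereas the Gagliardo integrand demands control of $u(x,t)-u(y,t)$ for arbitrary $y$. Writing $y-x=\sum_{k=1}^{n}h_k e_k$ with $h_k=y_k-x_k$, I telescope
\begin{equation*}
u(y,t)-u(x,t)=\sum_{k=1}^{n}\Bigl[u\Bigl(x+\sum_{j\leq k}h_j e_j,\,t\Bigr)-u\Bigl(x+\sum_{j<k}h_j e_j,\,t\Bigr)\Bigr]
\end{equation*}
and estimate each summand, which is a pure $e_k$-translation of $u$, by the hypothesis applied with base-point $x+\sum_{j<k}h_j e_j$. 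Provided $|y-x|$ is smaller than a threshold $r_0=r_0(\mathrm{dist}(\tilde{\Omega},\partial\Omega'),\mathrm{dist}(\Omega',\partial\Omega))$, every intermediate base-point remains in $\Omega'$, so the hypothesis is legal. A Cauchy--Schwarz and a change to polar coordinates then reduce the small-increment contribution to a constant multiple of $\int_{0}^{r_0}r^{2(\theta-\gamma)-1}\,dr$, which converges exactly when $\gamma<\theta$; the region $\{|x-y|>r_0\}$ is once more absorbed into $\|u\|_{L^2(Q_T)}^2$ by the crude pointwise estimate.

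The principal obstacle is the geometric constraint in the spatial part: guaranteeing that every point of the coordinate-axis chain from $x$ to $y$ stays inside the domain where the hypothesis is valid. This is precisely why the conclusion is formulated on a strictly smaller subdomain $\tilde{\Omega}\subset\subset\Omega'$ and why the constant $c''$ must depend on $\mathrm{dist}(\partial\Omega',\tilde{\Omega})$. Once this inclusion is secured the remaining work is a routine Fubini-plus-polar-coordinates computation; the symmetry $h\mapsto -h$ of the hypothesis makes the signed/unsigned distinction harmless, and the cut-off at $|h|\leq 1$ (resp.\ $r_0$) keeps the exponent $2(\theta-\gamma)-1$ integrable near the origin.
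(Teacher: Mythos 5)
Your argument is correct. Note first that the paper does not prove this proposition at all: it is recalled from the cited reference \cite{ok}, so there is no internal proof to compare against; your direct splitting into small and large increments is the standard elementary route from a Nikolskii-type estimate to a Gagliardo--Slobodeckij seminorm bound, and both halves are sound. On the small-increment region the change of variables $h=s-t$ (resp.\ $y=x+h$), Fubini, the hypothesis at fixed $h$, and the integrability of $h^{2(\theta-\gamma)-1}$ near $0$ (exactly where $\gamma<\theta$ enters) give the bound; on the complementary region the crude estimate $|u(x,t)-u(x,s)|^2\le 2|u(x,t)|^2+2|u(x,s)|^2$ together with the global $L^2$ bound suffices, which is also where the dependence of $c'$ on $\delta$ and $\|u\|_{L^2(Q_T)}$ arises. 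Two details are worth writing out if you expand the sketch: (i) in the time part, after fixing $h$ you enlarge the $t$-integration to all of $(-T+\delta,-\delta)$, which is legitimate because the integrand is nonnegative and this is precisely the domain appearing in the hypothesis (and $t+h$ may leave $(-T+\delta,-\delta)$ but stays in $(-T,0)$, where $u\in L^2$, so nothing is lost); (ii) in the space part, after the coordinate telescoping you should invoke $|\sum_{k}a_k|^2\le n\sum_k|a_k|^2$ and then translate the $x$-variable in each summand, using that the shifted copy of $\tilde{\Omega}$ stays inside $\Omega'$ when $|x-y|\le r_0$ with $r_0<\min\{dist(\tilde{\Omega},\partial\Omega'),dist(\Omega',\partial\Omega),1\}$, so that each term is controlled by the hypothesis over $\Omega'$; this is exactly the geometric constraint you identified and the source of the dependence of $c''$ on the two distances. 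With these points made explicit, the Fubini-plus-polar-coordinates computation closes the proof as you describe.
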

From Proposition \ref{pr2.1}, we can see that in order to prove the fractional differentiability of $Du$ in Theorem \ref{th1.2}, it is only need to prove
\begin{equation}\label{hd2.6}
\int_{Q'}|\tau^hDu|^2dxdt\leq c_1|h|^{\theta},
\end{equation}
for all $\theta\in (0,\frac{2}{3})$, and
\begin{equation}\label{hd2.7}
\int_{Q'}|\tau_hDu|^2dxdt\leq c_2|h|^{\gamma},
\end{equation}
for $\gamma\in (0,1].$

On the other hand, for estimate the Hausdorff dimension of singular set of $u$ defined in Theorem \ref{th1.1}, we shall use the following arguments (cf.
\cite{sec10,sec30}).
\begin{Lemma}\label{hdle2.3}
Let $\Omega\subset\mathbb{R}^n$ ($n\geq 2$), $v\in W^{\beta,\beta/2;2}_{loc}(Q_T;\mathbb{R}^N)$ with $\beta>0$ and $N\geq1$.  Let
\begin{align*}
  A:=&\left\{z_0\in Q_T:\liminf_{\rho\longrightarrow0}\Xint-_{Q_{\rho}(z_0)}|v-(v)_{z_0,\rho}|^2dz>0\right\},\\
  B:=&\left\{z_0\in Q_T:\limsup_{\rho\longrightarrow0}|(v)_{z_0,\rho}|=\infty\right\}.
\end{align*}
Then, there holds
\begin{equation*}
  \text{dim}\mathcal {H}(A)\leq n+2-2\beta,\ \ \ \ \ \   \text{dim}\mathcal {H}(B)\leq n+2-2\beta.
\end{equation*}
\end{Lemma}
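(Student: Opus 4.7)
The plan is to combine an anisotropic parabolic fractional Poincar\'e inequality with a Vitali--Frostman type covering argument, in the paradigm used to pass from fractional Sobolev regularity to Hausdorff dimension bounds on singular sets. The crucial mechanism is that on a fixed compact subdomain $Q'\Subset Q_T$, the localized fractional energy $E(z_0,\rho):=[v]_{\beta,\beta/2;Q_\rho(z_0)}^2$ is subadditive along any disjoint collection of cylinders $\{Q_{\rho_i}(z_i)\}\subset Q'$, with $\sum_i E(z_i,\rho_i)\leq C\,[v]_{\beta,\beta/2;Q'}^2<\infty$.

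The main ingredient, and the one I expect to be the principal obstacle, is the parabolic fractional Poincar\'e inequality
\begin{equation*}
\Xint-_{Q_\rho(z_0)}|v-(v)_{z_0,\rho}|^2\,dz\leq C\,\rho^{2\beta-(n+2)}\,E(z_0,\rho).
\end{equation*}
To establish it I would split $v(x,t)-(v)_{z_0,\rho}$ into $\bigl(v(x,t)-(v(\cdot,t))_{B_\rho(x_0)}\bigr)+\bigl((v(\cdot,t))_{B_\rho(x_0)}-(v)_{z_0,\rho}\bigr)$ and apply the one-dimensional fractional Poincar\'e inequality in each variable separately: in $x$ on $B_\rho(x_0)$ with exponent $\beta$, producing the factor $\rho^{2\beta}$; and in $t$ on $\Lambda_\rho(t_0)$ of length $\rho^2$ with exponent $\beta/2$, producing $(\rho^2)^{\beta}=\rho^{2\beta}$. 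The anisotropy of the parabolic metric is precisely what matches the two factors, and that matching is what yields the exponent $n+2-2\beta$ (rather than $n+2-\beta$) in the final Hausdorff bound.

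Given this tool, the bound on $A$ becomes a density argument. If $z_0\in A$ there exist $\rho_k\to 0$ and $\eta>0$ with $\Xint-_{Q_{\rho_k}(z_0)}|v-(v)_{z_0,\rho_k}|^2 dz>\eta$, and the Poincar\'e inequality forces $E(z_0,\rho_k)\geq c\,\eta\,\rho_k^{n+2-2\beta}$. Fixing $s>n+2-2\beta$ and $\delta>0$, I would extract via Vitali's lemma a disjoint family $\{Q_{\rho_i}(z_i)\}$ with $z_i\in A$, $\rho_i<\delta$, whose concentric enlargements cover $A$; disjointness together with the subadditivity of $E$ then gives $\sum_i \rho_i^{n+2-2\beta}\leq C(\eta,v)$ uniformly in $\delta$, so that
\begin{equation*}
\mathcal{H}_s^\delta(A)\leq C\sum_i\rho_i^s\leq C\,\delta^{s-(n+2-2\beta)}\sum_i\rho_i^{n+2-2\beta}\longrightarrow 0\quad(\delta\to 0),
\end{equation*}
yielding $\text{dim}\mathcal{H}(A)\leq n+2-2\beta$. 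The set $B$ reduces to the same framework via the dyadic telescoping estimate $|(v)_{z_0,2^{-k}}-(v)_{z_0,2^{-k-1}}|\leq C\bigl(\Xint-_{Q_{2^{-k}}(z_0)}|v-(v)_{z_0,2^{-k}}|^2\,dz\bigr)^{1/2}$: a point $z_0\in B$ cannot have dyadic averages forming a Cauchy sequence, so the right-hand side (and hence, via the Poincar\'e inequality, $\rho^{-(n+2-2\beta)}E(z_0,\rho)$) must stay bounded below along a subsequence of scales, which places $B$ in an $A$-type scenario and delivers the same Hausdorff bound $\text{dim}\mathcal{H}(B)\leq n+2-2\beta$.
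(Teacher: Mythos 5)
Your overall strategy --- an anisotropic parabolic fractional Poincar\'e inequality combined with a Vitali/Frostman covering argument and a dyadic telescoping estimate --- is precisely the standard route behind this lemma; note the paper does not prove it but imports it from \cite{sec10,sec30}, whose arguments follow the same scheme (phrased through an abstract density lemma for the finite measure generated by the localized seminorm, which is what your disjointness/subadditivity observation re-derives). Your Poincar\'e inequality is correct, and the subadditivity claim does hold: if two parabolic cylinders are disjoint then the associated regions $B_i\times B_i\times\Lambda_i$ and $B_i\times\Lambda_i\times\Lambda_i$ are disjoint as well, so the localized energies sum up below the global seminorm on $Q'$. The estimate for $A$ then goes through, except that your constant $C(\eta,v)$ tacitly assumes one $\eta$ for all centers; since $\eta$ depends on $z_0$, you should first decompose $A=\bigcup_j A_j$ with $A_j:=\{z_0:\liminf_{\rho\to0}\dashint_{Q_\rho(z_0)}|v-(v)_{z_0,\rho}|^2dz>1/j\}$ (intersected with compact subsets) and run the covering argument on each $A_j$; this is routine.

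The treatment of $B$, however, contains a genuine gap. From $\limsup_{\rho\to0}|(v)_{z_0,\rho}|=\infty$ you can only conclude that the series of dyadic increments $\sum_k|(v)_{z_0,2^{-k}}-(v)_{z_0,2^{-k-1}}|$ diverges; a divergent series may have terms tending to zero, so it does not follow that the increments --- nor, via Poincar\'e, the density $\rho^{-(n+2-2\beta)}E(z_0,\rho)$ --- ``stay bounded below along a subsequence of scales.'' Indeed, if $z_0$ merely satisfies $E(z_0,2^{-k})=o\bigl(2^{-k(n+2-2\beta)}\bigr)$, your estimate only gives increments that are $o(1)$, which is not summable and hence does not force the averages to converge; so the inclusion of $B$ into an $A$-type set at the exact exponent $n+2-2\beta$ is unjustified by your reasoning. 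The standard repair is to lose an epsilon in the exponent: fix $0<\beta'<\beta$ and show $B\subset\{z_0:\limsup_{\rho\to0}\rho^{-(n+2-2\beta')}E(z_0,\rho)>0\}$, because if this upper density vanishes then $E(z_0,2^{-k})\le C\,2^{-k(n+2-2\beta')}$ for all large $k$, and the Poincar\'e inequality yields $|(v)_{z_0,2^{-k}}-(v)_{z_0,2^{-k-1}}|\le C\,2^{-k(\beta-\beta')}$, a geometric and hence summable bound forcing the dyadic averages (and then all averages) to converge, contradicting $z_0\in B$. Your covering argument then gives $\dim\mathcal H(B)\le n+2-2\beta'$ for every $\beta'<\beta$, and letting $\beta'\uparrow\beta$ yields the stated bound.
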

\subsection{Minimizing affine function}
 Let $z_0\in \mathbb{R}^{n+2}$, $\rho$, $\lambda>0$. For a given function $u\in L^2(Q_{\rho}^{(\lambda)}(z_0);\mathbb{R}^N)$, we denote by $l_{z_0,\rho}^{(\lambda)}: \mathbb{R}^n\rightarrow \mathbb{R}^N$
the unique affine function (in space) minimizing
\begin{equation*}
l\mapsto \Xint-_{Q_{\rho}^{(\lambda)}(z_0)} |u-l|^2 dz,
\end{equation*}
amongst all affine function $l(z)=l(x)$ independent of $t$. We note that such a unique minimizing affine function exists and takes the form
\begin{equation}\label{2.2}
  l_{z_0,\rho}^{(\lambda)}(z)=\xi_{z_0,\rho}^{(\lambda)}+A_{z_0,\rho}^{(\lambda)}(x-x_0),
\end{equation}
 where $\xi_{z_0,\rho}^{(\lambda)}\in \mathbb{R}^N$, $A_{z_0,\rho}^{(\lambda)}\in \mathbb{R}^{Nn}$. A straightforward calculation shows that
 \begin{equation*}
 \Xint-_{Q_\rho^{(\lambda)}(z_0) }(u-l_{z_0,\rho}^{(\lambda)}) a(x)dz=0,
 \end{equation*}
for any $a(x)=\xi+A(x-x_0)$ with $\xi\in \mathbb{R}^N$, $A\in \mathbb{R}^{Nn}$. This implies in particular that
\begin{equation}\label{2.3}
  \xi_{z_0,\rho}^{(\lambda)}=(u)_{z_0,\rho}^{(\lambda)}\ \ \ \  and \ \  \ \ A_{z_0,\rho}^{(\lambda)}=\frac{n+2}{\rho^2} \Xint-_{Q_\rho^{(\lambda)}(z_0)} u\otimes(x-x_0)dz.
\end{equation}
Furthermore, we need the following argument, which can be proven analogously to \cite{p26}. For any $\xi\in \mathbb{R}^n$ and $ A\in\mathbb{R}^{Nn}$ there holds
\begin{equation}\label{2.4}
  |A_{z_0,\rho}^{(\lambda)}-A|^2\leq \frac{n(n+2)}{\rho^2}\Xint-_{Q_\rho^{(\lambda)}(z_0)} |u-\xi-A(x-x_0)|^2 dz.
\end{equation}
Finally, we introduce the following conclusion (cf. \cite{A9} Lemma 3.8), which provide a connection between the minimizing affine functions $l(z)$ and $l_{z_0,\rho}^{(\lambda)}(z)$.
\begin{Lemma}\label{le2.2}
Let $p\geq 2$, $Q_\rho^{(\lambda)}(z_0) \in \mathbb{R}^{n+2}$ with $z_0\in \mathbb{R}^{n+2}$ and $\rho,\lambda>0$ be a scaled parabolic cylinder and $u\in L^p(Q_\rho^{(\lambda)}(z_0);\mathbb{R}^N)$, and let $l:\mathbb{R}^n\rightarrow \mathbb{R}^N$ be an affine function independent of  $t$. Then, we have
\begin{equation*}
  \Xint-_{Q_\rho^{(\lambda)}(z_0)}|u-l_{z_0,\rho}^{(\lambda)}|^p dz\leq c(n,p) \Xint-_{Q_\rho^{(\lambda)}(z_0)}|u-l|^p dz.
\end{equation*}
\end{Lemma}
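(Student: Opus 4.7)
The plan is to proceed by the standard triangle inequality split, combined with the fact that on the parabolic cylinder $Q_\rho^{(\lambda)}(z_0)$ all $L^q$ averages are equivalent on the finite-dimensional space of affine functions in $x$ (with constants depending only on $n$ and $q$), together with the $L^2$-minimality of $l_{z_0,\rho}^{(\lambda)}$.

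Concretely, I would write
\begin{equation*}
u - l_{z_0,\rho}^{(\lambda)} = (u-l) + (l - l_{z_0,\rho}^{(\lambda)})
\end{equation*}
and apply the triangle inequality in $L^p\!\left(Q_\rho^{(\lambda)}(z_0)\right)$ to obtain
\begin{equation*}
\Bigl(\Xint-_{Q_\rho^{(\lambda)}(z_0)}|u-l_{z_0,\rho}^{(\lambda)}|^p\,dz\Bigr)^{1/p}
\leq
\Bigl(\Xint-_{Q_\rho^{(\lambda)}(z_0)}|u-l|^p\,dz\Bigr)^{1/p}
+\Bigl(\Xint-_{Q_\rho^{(\lambda)}(z_0)}|l-l_{z_0,\rho}^{(\lambda)}|^p\,dz\Bigr)^{1/p}.
\end{equation*}

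Next I would handle the purely affine term $l - l_{z_0,\rho}^{(\lambda)}$. Since this is an affine function of $x$ on the ball $B_\rho(x_0)$ (and independent of $t$), a rescaling $y=(x-x_0)/\rho$ reduces it to an affine function on the unit ball, and all norms on the two-parameter space of such functions are equivalent. Hence there is a constant $c=c(n,p)$ with
\begin{equation*}
\Xint-_{Q_\rho^{(\lambda)}(z_0)}|l-l_{z_0,\rho}^{(\lambda)}|^p\,dz
\leq c(n,p)\Bigl(\Xint-_{Q_\rho^{(\lambda)}(z_0)}|l-l_{z_0,\rho}^{(\lambda)}|^2\,dz\Bigr)^{p/2}.
\end{equation*}
Then by the $L^2$-triangle inequality and the $L^2$-minimality of $l_{z_0,\rho}^{(\lambda)}$,
\begin{equation*}
\Xint-_{Q_\rho^{(\lambda)}(z_0)}|l-l_{z_0,\rho}^{(\lambda)}|^2\,dz
\leq 2\Xint-_{Q_\rho^{(\lambda)}(z_0)}|u-l|^2\,dz + 2\Xint-_{Q_\rho^{(\lambda)}(z_0)}|u-l_{z_0,\rho}^{(\lambda)}|^2\,dz
\leq 4\Xint-_{Q_\rho^{(\lambda)}(z_0)}|u-l|^2\,dz,
\end{equation*}
and since $p\geq 2$, H\"older's inequality upgrades the $L^2$ average on the right to an $L^p$ average.

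Combining these three steps yields the desired bound with a constant depending only on $n$ and $p$. The main point to verify carefully is the norm-equivalence inequality for affine functions on $Q_\rho^{(\lambda)}(z_0)$: because affine functions are independent of $t$, the time-integration factors out and cancels between the two averages, and the spatial comparison reduces to a scale-invariant statement on $B_1(0)$, so the constant is indeed independent of $\rho$ and $\lambda$. This is the only slightly non-routine ingredient; everything else is triangle inequality, $L^2$-minimality, and H\"older.
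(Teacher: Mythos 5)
Your proposal is correct, and since the paper does not prove Lemma \ref{le2.2} at all (it is quoted from \cite{A9}, Lemma 3.8), your argument --- triangle inequality in $L^p$, the $L^2$-minimality of $l_{z_0,\rho}^{(\lambda)}$ against the competitor $l$, H\"older's inequality on the normalized average, and equivalence of the $L^2$- and $L^p$-averages on the finite-dimensional space of time-independent affine maps after rescaling $x\mapsto (x-x_0)/\rho$ (the time factor cancelling) --- is exactly the standard proof of that cited result. The only point worth making explicit is that the equivalence constant can be taken to depend on $n,p$ only and not on $N$: for an $\mathbb{R}^N$-valued affine $\ell$ one has $\sup_{B_\rho}|\ell|\leq c(n)\Xint-_{B_\rho}|\ell|\,dx$ by applying the scalar statement to $\ell\cdot e$ and taking the supremum over unit vectors $e$, after which $\Xint-|\ell|^p\leq c(n,p)\bigl(\Xint-|\ell|^2\bigr)^{p/2}$ follows, so the constant in the lemma is indeed $c(n,p)$ as stated.
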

\subsection{$A-$caloric approximation}
A strongly elliptic bilinear form $A$ on $\mathbb{R}^{Nn}$ means that
\begin{equation*}
\nu|\widetilde{F}|^2\leq A(\widetilde{F},\widetilde{F}),\ \ \ \ \ \ A(F,\widetilde{F})\leq L|F||\widetilde{F}|,
\end{equation*}
for all $F,\widetilde{F}\in \mathbb{R}^{Nn}$ with ellipticity constant $\nu>0$ and upper bound $L>0$. We shall say that a function $h\in L^p(\Lambda_\rho(t_0);W^{1,p}(B_\rho(x_0);\mathbb{R}^N))$ is $A-caloric$ on $Q_\rho(z_0)$ if it satisfies
\begin{equation*}
\int_{Q_\rho(z_0)} h\cdot\varphi_t-A(Dh,D\varphi)dz=0,
\end{equation*}
for all $\varphi\in C_{0}^{\infty}(Q_\rho(z_0);\mathbb{R}^N)$.

In order to obtain the decay estimate (\ref{5.10}), we introduce the following $A-$caloric approximation lemma (cf. \cite{D18}).
\begin{Lemma}\label{le2.3}
 There exists a positive function $\delta_0=\delta_0(n,p,\nu,L,\varepsilon)\in (0,1]$
 with the following property, for each $\gamma\in (0,1]$, and each bilinear form $A$ in $\mathbb{R}^{Nn}$ with ellipticity constant $\nu$ and upper bound $L,\varepsilon$ is a positive number, whenever $u\in L^p(\Lambda_\rho(t_0);W^{1,p}(B_\rho(x_0);\mathbb{R}^N))$ satisfying
 \begin{equation*}
 \Xint-_{Q_\rho{(z_0)}}\left[\left(\left|\frac{u}{\rho}\right|^2+|Du|^2\right)\right]dz+\gamma^{p-2}\Xint-_{Q_\rho{(z_0)}}
 \left[\left|\frac{u}{\rho}\right|^p+|Du|^p\right]dz\leq 1,
 \end{equation*}
 is approximately $A-caloric$, in the sense that for each some $\delta\in(0,\delta_0]$ there holds
 \begin{equation*}
 \left|\Xint-_{Q_\rho{(z_0)}}(u\cdot\varphi_t-A(Du,D\varphi))dz\right|\leq\delta \sup_{Q_\rho(z_0)}|D\varphi|,
 \end{equation*}
 for all $\varphi\in C_{0}^{\infty}(Q_\rho(z_0);\mathbb{R}^N)$. Then, there exists an $A-$caloric function $h$ such that
 \begin{equation*}
 \Xint-_{Q_{\rho/2}{(z_0)}}\left[\left|\frac{h}{\rho/2}\right|^2+|Dh|^2\right]dz+\gamma^{p-2}\Xint-_{Q_{\rho/2}{(z_0)}}
 \left[\left|\frac{h}{\rho/2}\right|^p+|Dh|^p\right]dz\leq 2^{n+3+2p}
 \end{equation*}
   and
 \begin{equation*}
 \Xint-_{Q_{\rho/2}{(z_0)}}\left[\left|\frac{u-h}{\rho/2}\right|^2+\gamma^{p-2}\left|\frac{u-h}{\rho/2}\right|^p\right]dz\leq \varepsilon.
 \end{equation*}
\end{Lemma}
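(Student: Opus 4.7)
The plan is to prove Lemma \ref{le2.3} by the standard compactness/contradiction scheme of Duzaar--Mingione. After rescaling $u(x,t) \mapsto \rho^{-1} u(x_0+\rho x, t_0+\rho^2 t)$ we may assume $z_0 = 0$ and $\rho = 1$, so $Q_\rho(z_0) = Q_1$. Suppose, towards a contradiction, that the lemma fails for some $\varepsilon > 0$; then there exist sequences $\delta_k \downarrow 0$, $\gamma_k \in (0,1]$, bilinear forms $A_k$ with ellipticity constant $\nu$ and upper bound $L$, and functions $u_k \in L^p(\Lambda_1; W^{1,p}(B_1;\mathbb{R}^N))$ satisfying the normalized energy bound and being $\delta_k$-approximately $A_k$-caloric in the sense of the hypothesis, yet for which no $A_k$-caloric function $h$ satisfies the two conclusions simultaneously with the given $\varepsilon$.

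The compactness step extracts, along a subsequence, limits $A_k \to A$ (in the compact set of elliptic forms with constants $\nu, L$), $\gamma_k \to \gamma_\infty \in [0,1]$, $u_k \rightharpoonup u$ weakly in $L^p(\Lambda_1; W^{1,p}(B_1))$, and $u_k \to u$ strongly in $L^2(Q_1)$. The strong convergence comes from Aubin--Lions: the approximate $A_k$-caloric identity rewrites $\partial_t u_k$ as $\mathrm{div}(A_k Du_k) + r_k$ with $\|r_k\|_{(W^{1,\infty}_0)^*} \leq \delta_k$, so $\partial_t u_k$ is bounded in $L^1(\Lambda_1; (W^{1,\infty}_0(B_1))^*)$ while $u_k$ is bounded in $L^2(\Lambda_1; W^{1,2}(B_1))$. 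Passing to the limit $k \to \infty$ in the $\delta_k$-approximate identity produces
\begin{equation*}
\Xint-_{Q_1} \bigl(u \cdot \varphi_t - A(Du, D\varphi)\bigr)\, dz = 0 \quad \text{for all } \varphi \in C_0^\infty(Q_1;\mathbb{R}^N),
\end{equation*}
so $u$ itself is $A$-caloric.

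To build the candidate comparison maps, I would let $h_k$ be the unique $A_k$-caloric function on $Q_{1/2}$ with parabolic boundary values $h_k = u_k$ on $\partial_p Q_{1/2}$. Standard energy estimates give uniform control of $h_k$ in $L^\infty(\Lambda_{1/2}; L^2) \cap L^2(\Lambda_{1/2}; W^{1,2})$, from which the bound $2^{n+3+2p}$ for the rescaled $L^2 + L^p$ norm of $h_k$ in the conclusion follows; the $L^p$ piece uses Meyers-type higher integrability for $A_k$-caloric functions, uniform in $k$ because the ellipticity is uniform. The difference $w_k = u_k - h_k$ vanishes on $\partial_p Q_{1/2}$ and satisfies, for every test $\varphi \in C_0^\infty(Q_{1/2})$,
\begin{equation*}
\Xint-_{Q_{1/2}}\bigl(w_k\cdot\varphi_t - A_k(Dw_k, D\varphi)\bigr)\,dz = \Xint-_{Q_{1/2}}\bigl(u_k\cdot\varphi_t - A_k(Du_k,D\varphi)\bigr)\,dz,
\end{equation*}
and the right side is at most $C\delta_k \sup|D\varphi|$. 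Choosing $\varphi$ to be a Steklov-mollified version of $w_k$ (cut off away from the time boundary), one obtains $\|w_k\|_{L^2(Q_{1/2})} \to 0$; then interpolation between this $L^2$ decay and the uniform $L^p$ bound on $w_k$, weighted by $\gamma_k^{p-2}$, forces the rescaled quantity in the conclusion to tend to zero, contradicting the failure assumption.

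The main obstacle I anticipate is the last step: recovering the weighted $L^p$-smallness of $w_k$, since the factor $\gamma_k^{p-2}$ may degenerate as $\gamma_k \to 0$ (when $p > 2$) or is simply $1$ (when $p = 2$). The case $\gamma_\infty = 0$ must be handled separately by dropping the $L^p$ piece from the hypotheses via Hölder and only using the $L^2$ framework in the contradiction, whereas for $\gamma_\infty > 0$ the uniform Meyers higher integrability of $h_k$ allows a genuine interpolation. Secondary technical points are the correct Steklov regularization of $w_k$ (so that $\varphi_t$ is admissible despite the limited time regularity of $w_k$) and the verification that the constant $2^{n+3+2p}$ in the conclusion is indeed the one delivered by the standard $A_k$-caloric energy and maximum bounds on $Q_{1/2}$ given a normalized input on $Q_1$.
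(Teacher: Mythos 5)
The paper itself gives no proof of Lemma \ref{le2.3}: it is quoted verbatim from Duzaar--Mingione--Steffen \cite{D18}, whose argument is the compactness/contradiction scheme you outline, so your overall frame (rescaling, contradiction sequences, $A_k\to A$, $\gamma_k\to\gamma_\infty$, Aubin--Lions type compactness, identification of the limit as $A$-caloric) is the right one. The execution, however, has a genuine gap at the decisive step. The hypothesis controls the error functional only by $\delta_k\sup|D\varphi|$, i.e.\ in a $C^1_0$-dual type norm, so testing the identity for $w_k=u_k-h_k$ with a Steklov-mollified $w_k$ cannot yield $\|w_k\|_{L^2(Q_{1/2})}\to 0$: for such a test function $\sup|D\varphi|$ is not uniformly bounded (you only control $\|Dw_k\|_{L^p}$), hence the right-hand side $C\delta_k\sup|D\varphi|$ is not small, and mollifying at a scale $\epsilon_k$ merely trades this for commutator and cut-off errors that are not small without further compactness. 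In the actual proof the $\delta_k$-smallness is used only against \emph{fixed} smooth test functions, to show that the weak limit is $A$-caloric; the $L^2$-closeness of $u_k$ to the comparison maps comes from strong $L^2$-compactness of both sequences toward the same limit (equivalently, Aubin--Lions applied to $w_k$ together with uniqueness for the limiting Cauchy--Dirichlet problem with zero data, including a passage of the zero initial condition to the limit), not from an energy estimate against the $\delta_k$-term.

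A second concrete problem is your choice of comparison maps: the Cauchy--Dirichlet problem with parabolic boundary values $u_k$ on $\partial_p Q_{1/2}$ is not well posed in your setting, since $u_k\in L^p(\Lambda_1;W^{1,p}(B_1;\mathbb{R}^N))$ has no well-defined initial-time trace, and with such rough data you cannot reach the weighted bound $\gamma_k^{p-2}$ times the mean of $|Dh_k|^p$ by a constant such as $2^{n+3+2p}$: Meyers-type higher integrability raises the exponent only slightly above $2$, not up to an arbitrary $p\geq 2$. In \cite{D18} the comparison map is built from the smooth $A$-caloric limit function (which enjoys interior estimates in every norm on the half-cylinder), the stated energy constant is produced from those estimates, and the $\gamma_k^{p-2}$-weighted $L^p$-closeness is then recovered by interpolation with the parabolic Sobolev embedding --- precisely the obstacle you flag at the end but leave unresolved. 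As it stands, both the $L^2$-smallness step and the weighted $L^p$ step of your proposal would fail and need to be replaced by the compactness-plus-interpolation argument of the cited source.
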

\section{Caccioppoli type inequality}\label{se3}
In this section, we propose to derive  a Caccioppoli type inequality under  the conditions (\ref{2.01})-(\ref{2.03}), (\ref{2.5}). Such result provide the smallness
condition in the iteration process of decay estimate in Section \ref{se5}.
\begin{Lemma}\label{le3.1}(Caccioppoli type inequality)
Let $u\in C^{0}(-T,0;L^2(\Omega;\mathbb{R}^N))\cap L^p(-T,0;W^{1,p}(\Omega;\mathbb{R}^N))$ be a weak solution to (\ref{1.1}) under the assumption
(\ref{2.01})-(\ref{2.03}), (\ref{2.5}), $Q_{\rho}^{(\lambda)}\subset Q_T$ is a scaled parabolic cylinder with reference point $z_0=(x_0,t_0)$ and $0<\rho\leq1$ suitable
small, scaling factor $\lambda\geq 1$ and affine function $l:\mathbb{R}^n\rightarrow \mathbb{R}^N$ such that $\lambda\leq 1+|Dl|$. Then there holds
\begin{align}\label{3.1}
&\Xint-_{Q_{\rho/2}^{(\lambda)}(z_0)}\left[\frac{|Du-Dl|^2}{(1+|Dl|)^2}+\frac{|Du-Dl|^p}{(1+|Dl|)^p}\right]dz\nonumber\\
\leq& c\left[\Xint-_{Q_{\rho}^{(\lambda)}(z_0)}\left[\frac{|u-l|^2}{\rho^2(1+|Dl|)^2}
+\frac{|u-l|^p}{\rho^p(1+|Dl|)^p}\right]dz\right.\nonumber\\
&\left.+\omega\left(\Xint-_{Q_\rho^{(\lambda)}(z_0)}|u-l(z_0)|^2 dz\right)+\omega(\rho^2)
+\rho\left(1+\Xint-_{Q^{\lambda}_{\rho}(z_0)}|u|^pdz\right)\right]
\end{align}
with $c=c(n,p,\nu,L)$.
\end{Lemma}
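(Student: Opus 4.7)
My plan is to test the Steklov-averaged weak formulation (\ref{hd2.2}) with $\varphi=\eta^2(u_h-l)$, where $\eta(x,t)=\eta_1(x)\eta_2(t)$ is a parabolic cut-off localizing to $Q_{\rho/2}^{(\lambda)}(z_0)$: $\eta_1\in C_0^\infty(B_\rho(x_0))$ with $\eta_1\equiv 1$ on $B_{\rho/2}$ and $|D\eta_1|\lesssim \rho^{-1}$, while $\eta_2\in C^\infty(\mathbb{R})$ vanishes at $t=t_0-\lambda^{2-p}\rho^2$, equals $1$ on $\Lambda_{\rho/2}^{(\lambda)}(t_0)$, and satisfies $|\eta_2'|\lesssim \lambda^{p-2}\rho^{-2}$. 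After passing $h\to 0$ and using that $l$ is time-independent so that $\partial_t(u-l)=\partial_t u$, standard parabolic integration by parts bounds the coercive quantity $\tfrac{1}{2}\sup_t\int_{B_\rho}\eta_1^2|u-l|^2(\cdot,t)\,dx+\int_{Q_\rho^{(\lambda)}}a(z,u,Du)\cdot D(\eta^2(u-l))\,dz$ from above by the time-derivative remainder $\int\eta|\eta_t||u-l|^2\,dz$ plus the inhomogeneity $\int b\cdot\eta^2(u-l)\,dz$. For the divergence term I will use the three-way freezing split
\begin{equation*}
a(z,u,Du)=[a(z,u,Du)-a(z,u,Dl)]+[a(z,u,Dl)-a(z_0,l(z_0),Dl)]+a(z_0,l(z_0),Dl),
\end{equation*}
where the third summand is a constant tensor and so contributes zero to $\int a\cdot D(\eta^2(u-l))\,dz$ by the divergence theorem combined with the spatial compact support of $\eta_1$.

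The first bracket, paired with $\eta^2(Du-Dl)$, is coercive by the monotonicity (\ref{hd2.5}) and dominates $c\int\eta^2(1+|Du|+|Dl|)^{p-2}|Du-Dl|^2\,dz$, while its cross pairing with $2\eta D\eta\otimes(u-l)$ is absorbed into this coercive term via Young's inequality at the cost of the Poincar\'e-type integral $c\int|D\eta|^2(1+|Du|+|Dl|)^{p-2}|u-l|^2\,dz$. A region-split of this integrand (based on whether $|Du-Dl|$ is smaller or larger than $1+|Dl|$) combined with the elementary Young inequality $|D\eta|^2|u-l|^2|Du-Dl|^{p-2}\leq\epsilon|Du-Dl|^p+C_\epsilon|D\eta|^p|u-l|^p$ converts this, after division by $(1+|Dl|)^p$, into both the $L^2$- and $L^p$-scaled Poincar\'e averages on the right-hand side of (\ref{3.1}). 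The second (continuity) bracket is pointwise controlled via (\ref{2.03}) by $L\omega(d(z,z_0)^2+|u-l(z_0)|^2)(1+|Dl|)^{p-1}$; a further Young inequality leaves (modulo absorption) an error of the form $C_\epsilon\int\omega^2(\cdot)(1+|Dl|)^p\eta^2\,dz$, and since $\omega\leq 1$ is concave with $\omega(0)=0$, the chain $\omega^2\leq\omega$, Jensen's inequality, the subadditivity $\omega(a+b)\leq\omega(a)+\omega(b)$, and $|u-l(z_0)|^2\leq 2|u-l|^2+2|Dl|^2\rho^2$ on $Q_\rho^{(\lambda)}$ (recall $\lambda\leq 1+|Dl|$) together deliver exactly the $\omega(\Xint-|u-l(z_0)|^2\,dz)+\omega(\rho^2)$ terms on the right-hand side.

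The main obstacle is the $b$-term $\int b\cdot\eta^2(u-l)\,dz$, which must produce the remainder $\rho(1+\Xint-_{Q_\rho^{(\lambda)}}|u|^p\,dz)$ while keeping all constants independent of $\lambda$ and of the affine datum $l$---the very issue emphasized by the authors after (\ref{5.13}) in the introduction, since the crude $L^\infty$-bound $|l|\leq M\sim\lambda$ would break the iteration in Section \ref{se5}. Using (\ref{2.5}) to split $|b|\leq L(1+|Du|)^{p-1}+|u|^{q_1}$, I will apply Young's inequality with a parameter calibrated to $\rho$ to extract the absorbable coercive piece $\epsilon\int\eta^2(1+|Du|+|Dl|)^{p-2}|Du-Dl|^2\,dz$ together with lower-order remainders carrying $(1+|Dl|)^p$ factors; the latter are processed by the weighted Sobolev--Poincar\'e interpolation (\ref{1.3}) applied with weight $w(x)\equiv 1+|Dl|$ (trivially satisfying the required log-continuity since $w$ is constant in $x$), which exchanges them for the scale-invariant $\Xint-|u-l|^p/(\rho(1+|Dl|))^p$ piece plus the desired $\rho(1+\Xint-|u|^p\,dz)$ remainder, the factor $\rho$ being extracted from the time length $|Q_\rho^{(\lambda)}|\leq|B_\rho|\rho^2$ (using $\lambda\geq 1$, $p\geq 2$) together with H\"older in time. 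Collecting everything, absorbing all $\epsilon$-pieces, dividing through by $(1+|Dl|)^p|Q_{\rho/2}^{(\lambda)}|$, and finally splitting the coercive left-hand side via
\begin{equation*}
(1+|Du|+|Dl|)^{p-2}|Du-Dl|^2\gtrsim(1+|Dl|)^{p-2}|Du-Dl|^2\,\mathbf{1}_{\{|Du-Dl|\leq 1+|Dl|\}}+|Du-Dl|^p\,\mathbf{1}_{\{|Du-Dl|>1+|Dl|\}},
\end{equation*}
we recover the two-scale left-hand side of (\ref{3.1}). The decisive technical hurdle throughout is the $\lambda$-independence of the constants, which rests entirely on the weighted interpolation (\ref{1.3}) rather than on any crude $L^\infty$ bound for $l$.
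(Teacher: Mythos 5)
Most of your outline (the test function $\eta^2(u-l)$, the freezing split into a monotone part, a $(z,u)$-continuity part handled by (\ref{2.03}) plus Jensen, the constant tensor, the time-derivative remainder with $|\eta_2'|\lesssim\lambda^{p-2}\rho^{-2}\leq(1+|Dl|)^{p-2}\rho^{-2}$, and the region-split/Young treatment of the cross term) follows the paper's proof, which uses the same scheme with the test function $\eta^{q_0+1}\xi^2(u-l)$. The genuine gap is in the inhomogeneity term $\int\eta^2|u|^{q_1}|u-l|\,dz$, which is exactly the point of the lemma. Your plan is to invoke the weighted interpolation (\ref{1.3}) with the \emph{constant} weight $w\equiv 1+|Dl|$. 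A constant weight makes (\ref{1.3}) nothing more than the ordinary Gagliardo--Nirenberg inequality decorated with fixed powers of $1+|Dl|$ on each side: the scaling-critical term balances exactly, and the lower-order term does not, so the invocation either adds nothing or re-introduces powers of $1+|Dl|\sim\lambda$ in the constant --- the very dependence you claim it removes. In the paper the weight is not constant: it is (a power of) the spatial cut-off itself, and this is why the test function carries the high power $\eta^{q_0+1}$; the whole point is that the interpolation step then returns $\int|Du|^p\eta^{q_0+1}\,dz$ \emph{with the cut-off still attached}, so that after writing $|Du|^p\lesssim|Du-Dl|^p+(1+|Dl|)^p$ the first piece can be absorbed into the left-hand side (same cut-off, small coefficient $\varepsilon_2(\sigma-r)^{p/(p-1)}$), while the second yields the harmless $\rho(1+|Dl|)^p$ contribution.

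With your constant-weight (i.e.\ unweighted) interpolation and plain $\eta^2$ cut-off this absorbable structure is lost. After Young with the $\rho$-calibration you must control $\rho^{p'}\Xint-_{Q_\rho^{(\lambda)}}|u|^{\frac{(n+2)p}{n}}dz$; ordinary parabolic interpolation turns this into $\rho^{p'}\Xint-_{Q_\rho^{(\lambda)}}|Du|^p\,dz$ over the \emph{full} cylinder, without cut-off, and neither the left-hand side of (\ref{3.1}) (which lives on $Q_{\rho/2}^{(\lambda)}$) nor its right-hand side (which contains only $\rho\,\Xint-|u|^p$, not $\Xint-|Du-Dl|^p$) can accommodate it; forcing the cut-off in by interpolating $\eta^{\beta}u$ instead creates $|D\eta|$-terms of order $\rho^{-p}\Xint-|u|^p$, which is not $\rho\,\Xint-|u|^p$. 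You have also not set up intermediate radii $\rho/2\leq r<\sigma\leq\rho$ and an iteration/hole-filling step that could compensate for this, and the claimed ``exchange'' of the remainder for $\Xint-|u-l|^p/(\rho(1+|Dl|))^p$ plus $\rho(1+\Xint-|u|^p)$ is asserted rather than derived --- an interpolation inequality for $u$ alone cannot produce the $u-l$ Poincar\'e quantity or lower the exponent from $\frac{(n+2)p}{n}$ to $p$ without a derivative term that must then be absorbed. So as written the treatment of $b$ does not close, and this is precisely the difficulty the paper's choice of weight $\eta$ and exponent $q_0+1$ is designed to resolve.
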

\begin{proof} Let $0<\rho/2\leq r<\sigma\leq\rho<1$, let $\eta(x)\in C_{0}^{1}(B_\sigma(x_0);\mathbb{R})$ be a cut-off function, $i.e$.
$0\leq \eta\leq1$,  $\eta=1$ in  $B_{r}(x_{0})$ and $|\nabla \eta|\leq\frac{c'}{\sigma-r}$ with $c'$ is a positive constant independent of $\sigma$ and
$r$. Moreover, we choose $\xi\in C_0^{1}(\Lambda_{\sigma}^{(\lambda)}(t_0))$ be a cut-off function in time, such that, with $0<\delta_1<r$ being arbitrary
\begin{equation*}
  \left\{
    \begin{array}{ll}
   \xi=1 & t\in (t_0-\lambda^{2-p}r^2,t_0-\delta_1),\\
   \xi=0 & t\in (-\infty,t_0-\lambda^{2-p}\sigma^2)\cup(t_0,0], \\
    0\leq\xi\leq1 & t\in \Lambda_{\sigma}^{(\lambda)}(t_0),\\
    |\xi_t|\leq\frac{2\lambda^{p-2}}{|\sigma-r|^2} & t\in (t_0-\lambda^{2-p}r^2,t_0].
    \end{array}
  \right.
  \end{equation*}
For simplicity, in what follows, we will omit the reference point $z_0$, and denote $Q_{\rho}^{(\lambda)}(z_0)$,$B_{\rho}^{(\lambda)}(x_0),\Lambda_{\rho}^{(\lambda)}(t_0)$ as $Q_{\rho}^{(\lambda)},B_\rho^{(\lambda)},$  $\Lambda_{\rho}^{(\lambda)}$, respectively.  Let $\varphi=\eta^{q_0+1}\xi^2(u-l)$ with
\begin{equation*}
q_0\geq  \max\left\{p-1,\frac{np(p-2)(n+2)}{2p[(n+2)p-2n]}-1\right\}
\end{equation*}
as a test function in the weak formulation $(\ref{2.1})$, which implies that
\begin{align}\label{3.3}
\int_{Q_\sigma^{(\lambda)}} a(z,u,Du)\cdot D(u-l)\xi^2\eta^{q_0+1} dz =&-(q_0+1)\int_{Q_\sigma^{(\lambda)}}a(z,u,Du)\cdot \xi^2\eta^{q_0}\nabla\eta\otimes(u-l)dz\nonumber\\
 & +\int_{Q_\sigma^{(\lambda)}}u\cdot\partial_t\varphi dz+\int_{Q_\sigma^{(\lambda)}}b(z,u,Du)\cdot\varphi dz.
 \end{align}
Observe  that
 \begin{align}\label{3.4}
 \int_{Q_\sigma^{(\lambda)}} a(z,u,Dl)\cdot D\varphi dz
 =&\int_{Q_\sigma^{(\lambda)}} a(z,u,Dl)\cdot D(u-l)\xi^2\eta^{q_0+1}dz\nonumber\\
 &+(q_0+1)\int_{Q_\sigma^{(\lambda)}}a(z,u,Dl)\cdot\xi^2\eta^{q_0}\nabla\eta\otimes(u-l) dz,
 \end{align}
 and
 \begin{equation}\label{3.5}
 \int_{Q_\sigma^{(\lambda)}} a(z_0,l(z_0),Dl)\cdot D\varphi dz=0.
 \end{equation}
Thus, inserting (\ref{3.4})-(\ref{3.5}) into (\ref{3.3}) and note that $l(z)=l(x)$, we arrive at
\begin{align}\label{3.6}
 \int_{Q_\sigma^{(\lambda)}}&[a(z,u,Du)-a(z,u,Dl)]\cdot D(u-l)\xi^2\eta^{q_0+1} dz\nonumber\\
 =&-(q_0+1)\int_{Q_\sigma^{(\lambda)}}[a(z,u,Du)-a(z,u,Dl)]\cdot\xi^2\eta^{q_0}\nabla\eta\otimes(u-l)dz\nonumber\\
 & -\int_{Q_\sigma^{(\lambda)}}[a(z,u,Dl)-a(z_0,u,Dl)]\cdot D\varphi dz\nonumber\\
 & -\int_{Q_\sigma^{(\lambda)}}[a(z_0,u,Dl)-a(z_0,l(z_0),Dl)]\cdot D\varphi dz\nonumber\\
 & +\int_{Q_\sigma^{(\lambda)}} (u-l)\cdot\partial_t\varphi dz
 +\int_{Q_\sigma^{(\lambda)}} b(z,u,Du)\cdot\varphi dz\nonumber\\
 :=& I+II+III+IV+V.
\end{align}
Firstly, we focus our attention on  estimating the term in the left side of (\ref{3.6}). Appealing to  (\ref{2.02}) and Lemma \ref{le2.1}, we infer that
\begin{align}\label{3.7}
 \int_{Q_\sigma^{(\lambda)}}&[a(z,u,Du)-a(z,u,Dl)]\cdot D(u-l)\xi^2\eta^{q_0+1} dz\nonumber\\
 &= \int_{Q_\sigma^{(\lambda)}}\int_{0}^{1}(\partial_F a(z,u,Dl+s(Du-Dl))(Du-Dl),Du-Dl)\xi^2\eta^{q_0+1} dsdz\nonumber\\
 & \geq\int_{Q_\sigma^{(\lambda)}}\int_{0}^{1}\nu(1+|Dl+s(Du-Dl)|)^{p-2}|Du-Dl|^2\xi^2\eta^{q_0+1} dsdz\nonumber\\
 & \geq\frac{\nu}{c(p)}\int_{Q_\sigma^{(\lambda)}}[(1+|Dl|)^{p-2}|Du-Dl|^2+|Du-Dl|^p]\xi^2\eta^{q_0+1} dz
\end{align}
Now, we turn to estimate the terms $I-V$ in (\ref{3.6}). For the term $I$, we first note that, from (\ref{2.01}) there holds
\begin{equation*}
|a(z,u,Du)-a(z,u,Dl)|\leq L(1+|Du|+|Dl|)^{p-2}|Du-Dl|,
\end{equation*}
and hence
\begin{align}\label{3.8}
 I\leq&\varepsilon\int_{Q_\sigma^{(\lambda)}}(1+|Dl|)^{p-2}|Du-Dl|^2\xi^2\eta^{2q_0} dz+c(L,\varepsilon)\int_{Q_\sigma^{(\lambda)}}(1+|Dl|)^{p-2}\xi^2|\nabla\eta|^2|u-l|^2 dz\nonumber\\
&+\varepsilon \int_{Q_\sigma^{(\lambda)}}|Du-Dl|^p\xi^2\eta^{\frac{q_0p}{p-1}}dz
+c(p,L,\varepsilon)\int_{Q_\sigma^{(\lambda)}}\xi^2\left|\frac{u-l}{\sigma-r}\right|^p dz,
\end{align}
where $\varepsilon\in (0,1)$ will be specified in later, and in the previous inequality we have taken into account that  $|Du|\leq |Dl|+|Du-Dl|$.

Next, using (\ref{2.03}), we deduce that
\begin{align}\label{3.9}
 II+III\leq & L(1+|Dl|)^{p-1}\int_{Q_\sigma^{(\lambda)}}\left[\omega(d(z,z_0)^2)+\omega(|u-l(z_0)|^2)\right]|D\varphi|dz\nonumber\\
 \leq& c(L)(1+|Dl|)^{p-1}\int_{Q_\sigma^{(\lambda)}}\left[\omega(\rho^2)+\omega (|u-l(z_0)|^2)\right]\left[|Du-Dl|\eta^{q_0+1}\xi^2 +\left|\frac{u-l}{\sigma-r}\right|\eta^{q_0}\xi^2\right] dz\nonumber\\
 \leq & \varepsilon\int_{Q_\sigma^{(\lambda)}}|Du-Dl|^p\eta^{p(q_0+1)}\xi^2 dz+\int_{Q_\sigma^{(\lambda)}}\left|\frac{u-l}{\sigma-r}\right|^p dz\nonumber\\
 & +c(p,L,\varepsilon)(1+|Dl|)^p\int_{Q_\sigma^{(\lambda)}}[\omega(\rho^2)+\omega(|u-l(z_0)|^2)]dz.
\end{align}
For the term $IV$, note that $\lambda\leq1+|Dl|$, we have
\begin{equation}\label{3.10}
 IV\leq\lambda^{p-2}\int_{Q_\sigma^{(\lambda)}}\left|\frac{u-l}{\sigma-r}\right|^2 dz
 \leq(1+|Dl|)^{p-2}\int_{Q_\sigma^{(\lambda)}}\left|\frac{u-l}{\sigma-r}\right|^2 dz.
\end{equation}
Finally, we estimate the term $V$.  From $(H_6)$, we have
\begin{align}\label{3.12}
 V&\leq L\int_{Q_\sigma^{(\lambda)}}(1+|Du|)^{p-1}|u-l|\eta^{q_0+1}\xi^2 dz+\int_{Q_\sigma^{(\lambda)}}|u|^{q_1}|u-l|\eta^{q_0+1}\xi^2 dz\nonumber\\
 &:=V_1+V_2.
 \end{align}
By the Young's inequality, it is clearly that
\begin{align}\label{3.120}
V_1& \leq \varepsilon_1\int_{Q_\sigma^{(\lambda)}}(\sigma-r)^{\frac{p}{p-1}}(1+|Du|)^p\eta^{q_0+1}\xi^2dz
+c(\varepsilon_1,L)\int_{Q_\sigma^{(\lambda)}}\left|\frac{u-l}{\sigma-r}\right|^p\eta^{q_0+1}\xi^2dz\nonumber\\
&\leq 2^{p-1}\varepsilon_1\int_{Q_\sigma^{(\lambda)}}\left[(1+|Dl|)^p+|Du-Dl|^p\right](\sigma-r)^{\frac{p}{p-1}}
\eta^{q_0+1}\xi^2dz+c(\varepsilon_1,L)\int_{Q_\sigma^{(\lambda)}}
\left|\frac{u-l}{\sigma-r}\right|^p\eta^{q_0+1}\xi^2dz
\end{align}
with $\varepsilon_1\in (0,1)$ will be specified in later.

For the term $V_2$, first, we divide $B_{\sigma}$ into two parts: $B_{\sigma}:=B_{\sigma}^1\cup B_{\sigma}^2$ and
\begin{align*}
B_{\sigma}^1:= &\left\{x\in B_{\sigma}:\ 0\leq\eta(x)<(\sigma-r)^{\frac{n+2}{q_0+1}}\right\},\\
B_{\sigma}^2:= &\left\{x\in B_{\sigma}:\ (\sigma-r)^{\frac{n+2}{q_0+1}}\leq\eta(x)\leq1\right\}.
\end{align*}
Next, we take $k$, $q$, $p_1$, $p_0$, $\alpha$, $r$, $s$, $\alpha_0$ in (\ref{1.3}) as $0$, $\frac{(n+2)p}{n}$, $p$, $2$,  $q_0+2$, $1$, $q_0+1$,
$q_0+1-n(\frac12-\frac1p)$, respectively, and at this stage, we have $\theta=\frac{n}{n+2}$ in (\ref{1.3}). Moreover, by the definition of $q_0$, we can
see that
\begin{equation*}
(\alpha_0-1)\frac{(n+2)p}{2n}\geq q_0+1.
\end{equation*}
Therefore, by weighted Sobolev interpolation inequality (\ref{1.3}) and the H\"{o}lder's inequality, we are in a position to obtain
\begin{align*}
V_2\leq&\varepsilon_2(\sigma-r)^{\frac{p}{p-1}}\int_{Q_\sigma^{(\lambda)}}|u|^{\frac{q_1p}{p-1}}\eta^{q_0+1}\xi^2dz
+c(\varepsilon_2)\underbrace{\int_{Q_\sigma^{(\lambda)}} \left|\frac{u-l}{\sigma-r}\right|^p\eta^{q_0+1}\xi^2dz}_{V_{21}} \nonumber\\
\leq&\varepsilon_2(\sigma-r)^{\frac{p}{p-1}}\int_{Q_{\sigma}^{(\lambda)}}(1+|u|^{\frac{n+2}{n}p})\eta^{q_0+1}\xi^2dz+c(\varepsilon_2)V_{21}\nonumber\\
\leq&c(n,p)\varepsilon_2(\sigma-r)^{\frac{p}{p-1}}\int_{\Lambda_{\sigma}^{(\lambda)}}\left[\int_{B_{\sigma}^2}|Du|^p\eta^{q_0+1}dx
\left(\int_{B_{\sigma}^2}|u|^2\eta^{\alpha_0-1}dx\right)^{\frac{p}{n}}
+\left(\int_{B_{\sigma}^2}|u|^2\eta^{\alpha_0-1}dx\right)^{\frac{(n+2)p}{2n}}\right]\xi^2dt\nonumber\\
&+c(n,p)\varepsilon_2(\sigma-r)^{\frac{p}{p-1}}|Q_{\sigma}^{(\lambda)}|+c(\varepsilon_2)V_{21}
\end{align*}
\begin{align}\label{3.122}
\leq&c(n,p)\varepsilon_2(\sigma-r)^{\frac{p}{p-1}}\int_{Q_{\sigma}^{(\lambda)}}|Du|^p\eta^{q_0+1}\xi^2dz
+\varepsilon_2(\sigma-r)^{\frac{p}{p-1}}\underbrace{ c\int_{\Lambda_{\sigma}^{(\lambda)}}|B_{\sigma}|^{\frac{(n+2)p-2n}{2n}}
\int_{B_{\sigma}^2}|u|^{\frac{(n+2)p}{n}}\eta^{q_0+1}\xi^2dz}_{V_{22}}\nonumber\\
&+c\varepsilon_2(\sigma-r)^{\frac{p}{p-1}}|Q_{\sigma}^{(\lambda)}|+c(\varepsilon_2)V_{21},
\end{align}
where $\varepsilon_2\in (0,1)$ will be specified in later.

Now, we choose $\rho$ suitable small such that
\begin{equation*}
c|B_{\sigma}|^{\frac{(n+2)p-2n}{2n}}\leq\frac12.
\end{equation*}
Therefore, we have
\begin{align*}
V_{22} & \leq\frac12\int_{\Lambda_{\sigma}^{(\lambda)}}\int_{B_{\sigma}^2}|u|^{\frac{(n+2)p}{n}}\eta^{q_0+1}dz\nonumber\\
 &\leq\frac12 c(n,p)\int_{Q_\sigma^{(\lambda)}}|Du|^p\eta^{q_0+1}dz+\frac12
 \underbrace{ c\int_{\Lambda_{\sigma}^{(\lambda)}}|B_{\sigma}|^{\frac{(n+2)p-2n}{2n}}
\int_{B_{\sigma}^2}|u|^{\frac{(n+2)p}{n}}\eta^{q_0+1}\xi^2dz}_{V_{22}}.
\end{align*}
This implies that
\begin{equation}\label{3.212}
V_{22}\leq c(n,p)\int_{Q_\sigma^{(\lambda)}}|Du|^p\eta^{q_0+1}dz
\end{equation}
As a consequence, from (\ref{3.122})--(\ref{3.212}), it follows that
\begin{align}\label{3.15}
V_2&\leq c(n,p)\varepsilon_2(\sigma-r)^{\frac{p}{p-1}}\int_{Q^{(\lambda)}_{\sigma}}|Du|^p\eta^{q_0+1}\xi^2dz
+ c(n,p,\varepsilon_2)(\sigma-r)^{\frac{p}{p-1}}|Q_{\sigma}^{(\lambda)}|+c(\varepsilon_2)V_{21}\nonumber  \\
& \leq  c(n,p)\varepsilon_2(\sigma-r)^{\frac{p}{p-1}}\int_{Q^{(\lambda)}_{\sigma}}|Du-Dl|^p\eta^{q_0+1}\xi^2dz
+ c(n,p,\varepsilon_2)(\sigma-r)^{\frac{p}{p-1}}(1+|Dl|^p)|Q_{\sigma}^{(\lambda)}|+c(\varepsilon_2)V_{21}.
\end{align}
Inserting (\ref{3.7})-(\ref{3.120}) and (\ref{3.15}) into (\ref{3.6}), we conclude that
\begin{align*}
 &\frac{\nu}{c(p)}\int_{Q_\sigma^{(\lambda)}}\left[(1+|Dl|)^{p-2}|Du-Dl|^2+|Du-Dl|^p \right]\eta^{q_0+1}\xi^2 dz\nonumber\\
  \leq &\varepsilon\int_{Q_\sigma^{(\lambda)}}(1+|Dl|)^{p-2}|Du-Dl|^2\eta^{2q_0}\xi^2dz +c(\varepsilon,L)\int_{Q_\sigma^{(\lambda)}}\left|\frac{u-l}{\sigma-r}\right|^2(1+|Dl|)^{p-2}dz\nonumber\\
 & +\varepsilon\int_{Q_\sigma^{(\lambda)}}|Du-Dl|^p(\eta^{\frac{q_0p}{p-1}}+\eta^{p(q_0+1)})\xi^2dz
 +c(n,p,L,\varepsilon,\varepsilon_1)\int_{Q_\sigma^{(\lambda)}}\left|\frac{u-l}{\sigma-r}\right|^pdz\nonumber\\
 & +c(p,L,\varepsilon)(1+|Dl|)^{p}\int_{Q_\sigma^{(\lambda)}}[\omega(\rho^2)+\omega(|u-l(z_0)|^2)]dz\nonumber\\
 & +2^{p-1}\varepsilon_1\int_{Q_{\sigma}^{(\lambda)}}[(1+|Dl|)^p+|Du-Dl|^p](\sigma-r)^{\frac{p}{p-1}}\eta^{q_0+1}\xi^2dz\nonumber\\
& +c(n,p)\varepsilon_2(\sigma-r)^{\frac{p}{p-1}}\int_{Q_{\sigma}^{(\lambda)}}|D(u-l)|^p\eta^{q_0+1}\xi^2 dz
+c(n,p,\varepsilon_2)(1+|Dl|)^p|Q_{\sigma}|(\sigma-r)^{\frac{p}{p-1}}.
\end{align*}
Now, we choose $\varepsilon=\frac{\nu}{8c(p)}$ and $\varepsilon_1=2^{-(p+2)}\frac{\nu}{c(p)}$, $\varepsilon_2=\frac{\nu}{8c(n,p)c(p)}$, note that
$2q_0,\frac{pq_0}{p-1}\geq q_0+1$ and $\frac{p}{p-1}\in (1,2]$, moving the same terms into left side, and dividing by $(1+|Dl|)^p$ in
both side, taking mean values and Jensen's inequality
\begin{equation*}
\Xint-_{Q_\sigma^{(\lambda)}}\omega(|u-l(z_0)|^2)dz\leq\omega\left(\Xint-_{Q_\sigma^{(\lambda)}}|u-l(z_0)|^2 dz\right),
\end{equation*}
letting $\delta_1\rightarrow0$, then we have (\ref{3.1}).
\end{proof}
\section{Poincar\'{e} type inequality}\label{se4}
In this section, we aim at  establishing a Poincar\'{e} type inequality of weak solution to (\ref{1.1}) under the assumptions (\ref{2.01}), (\ref{2.03}),
(\ref{2.5}). We note that such inequality  plays a key role in the whole paper, that will be used in Sec. \ref{se5}, from which, we are able to show
that for every $z_0\in Q_T\setminus(\Sigma_1\cup\Sigma_2)$ and  suitable $0\leq\rho_0\leq1$, the assumption of Lemma \ref{le5.3} is valid.
\begin{Lemma}\label{le4.1}(Poincar\'{e} type inequality).
 Let $u\in L^p(-T,0;W^{1,p}(\Omega;\mathbb{R}^N))\cap C^{0}(-T,0;L^2(\Omega;\mathbb{R}^N))$ be a weak solution of (\ref{1.1}) in $Q_T$ under the assumption (\ref{2.01}), (\ref{2.03}), (\ref{2.5}), for any $Q_\rho{(z_0)}\subset Q_T$ be a parabolic cylinder with reference $z_0=(x_0,t_0)$ and $0<\rho\leq1$. Then, there holds
\begin{equation}\label{4.1}
\Xint-_{Q_\rho{(z_0)}}\left|\frac{u-(u)_{z_0,\rho}}{\rho}\right|^q dz\leq c\left(\Xint-_{Q_\rho{(z_0)}}(1+|Du|)^p dz\right)^q,
\end{equation}
for any $q\in[1,p]$, and
\begin{align}\label{4.2}
\Xint-_{Q_\rho{(z_0)}}&\left|\frac{u-(u)_{z_0,\rho}-(Du)_{z_0,\rho}{(x-x_0)}}{\rho}\right|^p dz\nonumber\\
 \leq &c\Xint-_{Q_\rho{(z_0)}}|Du-(Du)_{z_0,\rho}|^p dz\left[(1+|(Du)_{z_0,\rho}|)^p+\Xint-_{Q_\rho{(z_0)}}|Du-(Du)_{z_0,\rho}|^p dz\right]^{p-2}\nonumber\\
& +c(1+|(Du)_{z_0,\rho}|)^{p(p-1)}[\omega(\rho^2)]^p \left(\Xint-_{Q_\rho{(z_0)}}(1+|Du|)^p dz\right)^{2p}\nonumber\\
& +c\rho^p\left[(1+|(Du)_{z_0,\rho}|)^{p^2}+\left(\Xint-_{Q_\rho{(z_0)}}|Du-(Du)_{z_0,\rho}|^p dz\right)^p\right],
\end{align}
where $c=c(n,N,p,L)$.
\end{Lemma}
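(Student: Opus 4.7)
The plan is to establish both inequalities by combining standard spatial Poincar\'{e} with a PDE-based control on the temporal oscillation of spatial means. Since $u$ is not assumed to be continuous in time, the mean $(u)_{B_\rho}(t)$ can oscillate in $t$, and the equation (\ref{1.1}) is the only tool available to control that oscillation.

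For (\ref{4.1}), I would first write the split
\begin{equation*}
u(x,t)-(u)_{z_0,\rho}=\bigl(u(x,t)-(u)_{B_\rho}(t)\bigr)+\bigl((u)_{B_\rho}(t)-(u)_{z_0,\rho}\bigr),
\end{equation*}
and bound the first term by the classical spatial Poincar\'{e}, which yields the $\rho\int_{B_\rho}|Du|$ contribution after integrating in time. For the second term, I would test the Steklov-averaged form (\ref{hd2.2}) with $\varphi(x)=\eta(x)$ where $\eta$ is a cut-off identically equal to a constant on $B_\rho$; this turns $\int\partial_t u_h\cdot\eta\,dx$ into the finite difference of $(u)_{B_\rho}(\cdot)$ between two times, while the right-hand side is bounded using (\ref{2.01}) and (\ref{2.5}) by $c\rho(1+|Du|)^{p-1}+c\rho|u|^{q_1}$. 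The term with $|u|^{q_1}$ is then absorbed using the Ladyzhenskaya-type embedding (this is precisely why $q_1\leq (n+2)(p-1)/n$ is imposed), producing the $(\Xint-_{Q_\rho}(1+|Du|)^p)^q$ bound.

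For (\ref{4.2}), let $v(x,t):=u(x,t)-(u)_{z_0,\rho}-(Du)_{z_0,\rho}(x-x_0)$; this function has vanishing spatial gradient mean and the adjusted quantity on the left of (\ref{4.2}) is just $|v|^p/\rho^p$. The same splitting applied to $v$ reduces matters to controlling the temporal oscillation of $(v)_{B_\rho}(t)$. Here I would use the fact that $v$ weakly solves
\begin{equation*}
\partial_t v-\mathrm{div}\bigl[a(z,u,Du)-a(z_0,(u)_{z_0,\rho},(Du)_{z_0,\rho})\bigr]=b(z,u,Du),
\end{equation*}
because the $a$ evaluated at the constant arguments is divergence-free. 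Testing the Steklov form against a spatial bump $\eta$ and using Lemma \ref{hdle2.1} for the $Du-(Du)_{z_0,\rho}$ difference together with the continuity assumption (\ref{2.03}) for the $z$- and $u$-variations produces, respectively, the first term of (\ref{4.2}) (via a $(p-2)$-power factor from the monotone structure) and the $\omega(\rho^2)$ contribution. The $|u-l(z_0)|$ appearing inside $\omega$ is bounded by an application of (\ref{4.1}) already proved, which accounts for the $[\int(1+|Du|)^p]^{2p}$ factor after using Jensen's inequality and the concavity of $\omega$. The last $\rho^p$ term arises from the controllable growth of $b$ together with (\ref{4.1}) applied once more.

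The main obstacle is the one flagged as point (1) in the introduction: the spatial test function $\eta$ cannot be compactly supported in $B_\rho$ and simultaneously equal to a constant there, so the boundary of $B_\rho$ produces extra terms that cannot be dropped. To handle this I would carry out an iteration on a decreasing sequence of radii $\rho_k=(1-2^{-k})\rho$ using cut-offs supported on $B_{\rho_{k+1}}$ and equal to $1$ on $B_{\rho_k}$; the boundary error at level $k$ enters the next level multiplied by a geometric factor, and a standard absorption argument (summing the resulting geometric series) gives the clean bounds stated in (\ref{4.1})--(\ref{4.2}). A secondary technical difficulty is estimating $|u|^{q_1}$ in the controllable-growth term from $b$ without introducing a dependence on $\|u\|_\infty$; this is where the Ladyzhenskaya embedding and the precise range of $q_1$ must be exploited, in the same spirit as the weighted interpolation (\ref{1.3}) used in Lemma \ref{le3.1}.
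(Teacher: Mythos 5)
Your overall skeleton does match the paper's proof: split $u-(u)_{z_0,\rho}$ (and, for (\ref{4.2}), the affine-corrected function) into a spatial oscillation plus the temporal oscillation of spatial means, control the latter by testing the Steklov formulation with a spatial bump, use (\ref{2.01}), (\ref{2.03}) and the fact that the structure function frozen at $(z_0,(u)_{z_0,\rho},(Du)_{z_0,\rho})$ is divergence free, and close (\ref{4.2}) by feeding (\ref{4.1}) with $q=2$ into the concave modulus $\omega$. However, the obstacle you single out --- that a test function cannot be compactly supported and at the same time constant on $B_\rho$ --- is not the real difficulty, and your proposed remedy does not work as described. The paper simply replaces the plain mean $(u)_{B_\rho}(t)$ by the weighted mean $(u)_\eta(t)=\Xint-_{B_\rho}u\,\eta\,dx$ for one fixed nonnegative $\eta\in C_0^\infty(B_\rho)$ with $\Xint-_{B_\rho}\eta\,dx=1$ and $\|\eta\|_{L^\infty}+\rho\|\nabla\eta\|_{L^\infty}\le c(n)$; the weighted Poincar\'e inequality controls $u-(u)_\eta(t)$, and the term $\int a\cdot\nabla\eta$ produced by testing is not a boundary error to be iterated away but the main term, estimated directly through the growth condition. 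In your scheme with cut-offs between $B_{\rho_k}$ and $B_{\rho_{k+1}}$ the gradients of the cut-offs grow like $2^{k}/\rho$ and no geometric smallness factor ever appears, nor is the resulting term a multiple of the left-hand side, so there is neither a geometric series to sum nor anything to absorb: the ``iteration on radii'' has no mechanism behind it.

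The iteration that is genuinely needed --- and the one point (1) of the introduction refers to --- sits elsewhere and is missing from your proposal: to control the inhomogeneity one must prove
\begin{equation*}
\Xint-_{Q_\rho}|u|^{q_1}\,dz\le c(n,p)\Bigl(1+\Xint-_{Q_\rho}|Du|^p\,dz\Bigr),
\end{equation*}
i.e. (\ref{4.96}), and the Ladyzhenskaya/Gagliardo--Nirenberg inequality cannot be applied in its clean form because $u$ does not vanish on $\partial B_\rho$: the lower-order norm of $u$ reappears on the right-hand side and has to be reabsorbed. The paper achieves this by splitting the time interval into the sets $J_1,J_2$ according to whether that lower-order contribution dominates and absorbing iteratively, see (\ref{4.92})--(\ref{4.95}). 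Your proposal only gestures at ``the Ladyzhenskaya embedding and the precise range of $q_1$'' without supplying this absorption, so the $|u|^{q_1}$ term --- and with it the last terms of both (\ref{4.1}) and (\ref{4.2}) --- is not actually controlled. The remaining steps you outline for (\ref{4.2}) (the $(p-2)$-power factor from the growth/monotonicity structure, the $\omega$ terms from (\ref{2.03}), and the bootstrap through (\ref{4.1}) and the concavity of $\omega$) do coincide with the paper's argument.
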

\begin{proof} For simplicity, we may also omit the reference point $z_0$ of $Q_\rho{(z_0)}$, $B_\rho{(z_0)}$, $\Lambda_\rho{(z_0)}$, instead by $Q_\rho$, $B_\rho$, and $\Lambda_\rho$, respectively, if there is no  danger of any  confusion. Let $\eta\in C_{0}^{\infty}(B_\rho)$ be a nonnegative weight function satisfying
\begin{equation*}
  \left\{
    \begin{array}{ll}
 \eta\geq0,\\
   \Xint-_{B_\rho}\eta dx=1, \\
    \|\eta\|_{L^{\infty}}+\rho\|\nabla\eta\|_{L^{\infty}}\leq c_\eta,
    \end{array}
  \right.
\end{equation*}
where $c_\eta=c_{\eta}(n)$, define
\begin{equation*}
(u)_\eta(t)=\Xint-_{B_\rho} u(x,t)\eta dx,
\end{equation*}
as a weighted mean of $u(x,t)$ on $B_\rho$ for a.e. $t\in(-T,0)$. To begin with, we shall show the following argument for a.e. $t,\tau\in\Lambda_\rho$:
\begin{equation}\label{4.7}
|(u)_\eta(t)-(u)_\eta(\tau)|\leq c\rho\Xint-_{Q_\rho}(1+|Du|)^{p-1} dz+c\rho^2\Xint-_{Q_\rho}(1+|Du|)^{p} dz,
\end{equation}
and
\begin{align}\label{4.8}
|(u)_\eta(t)-(u)_\eta(\tau)|\leq& c\rho\Xint-_{Q_\rho}\left[(1+|(Du)_\rho|)^{p-2}|Du-(Du)_\rho|+|Du-(Du)_\rho|^{p-1}\right]dz\nonumber\\
& +c\rho (1+|(Du)_\rho|)^{p-1}\left[\omega \left(\Xint-_{Q_\rho}|u-(u)_\rho|^2dz\right)+\omega_(\rho^2)\right]\nonumber\\
& +c\rho^2\left[(1+|(Du)_\rho|)^{p}+\Xint-_{Q_\rho}|Du-(Du)_\rho|^{p-1} dz+\Xint-_{Q_\rho}|Du-(Du)_\rho|^{p} dz\right],
\end{align}
where $c=c(n,N,p,L)$.

Now, we concentrate our attention on the proof of (\ref{4.7})-(\ref{4.8}),  without loss of generality, we may assume $t>\tau$, let $\xi_\theta(s)\in C_{0}^{\infty}((\tau,t))$ be a cut-off function, defined by
\begin{align*}
\xi_\theta=
\begin{cases}
 1  &s\in [\tau+\theta,t-\theta],  \\
 \frac{s-\tau}{\theta}&s\in(\tau,\tau+\theta),\\
  -\frac{s-t}{\theta}&s\in(t-\theta,t),
  \end{cases}
\end{align*}
with $\theta\in (0,(t-\tau)/2)$. We now choose $\varphi_\theta:\mathbb{R}^{n+2}\rightarrow \mathbb{R}^N$ be a test function in the weak formulation (\ref{2.1}) with $(\varphi_\theta)_i=\eta\xi_\theta$ and $(\varphi_\theta)_j=0$ for $j\neq i$ and $i,j\in \{1,\cdots,N\}$, which implies
\begin{equation}\label{4.9}
\int_{\tau}^{t}\Xint-_{B_\rho}u_i\eta\partial_s\xi_\theta dxds=\int_{\tau}^{t}\Xint-_{B_\rho}a_i(z,u,Du)\cdot\nabla\eta\xi_\theta-b_i(z,u,Du)\cdot\eta\xi_\theta dxds.
\end{equation}
Taking into account the  Steklov arguments  and the definition of $(u)_\eta(t)$, we first deduce that
\begin{align*}
\int_{\tau}^{t}\Xint-_{B_\rho}u_i\eta\partial_s\xi_\theta dxds&=\int_{\tau}^{t}(u_i)_\eta\partial_s\xi_\theta ds\nonumber\\
& =\frac{1}{\theta}\int_{\tau}^{\tau+\theta}(u_i)_\eta ds-\frac{1}{\theta}\int_{t-\theta}^{t}(u_i)_\eta ds\rightarrow(u_i)_\eta(\tau)-(u_i)_\eta(t),
\end{align*}
as $\theta\rightarrow0$.

Next, letting $\theta\rightarrow0$ in the right side of (\ref{4.9}), we arrive at
\begin{equation}\label{4.10}
(u_i)_\eta(\tau)-(u_i)_\eta(t)=\int_{\tau}^{t}\Xint-_{B_\rho}a_i((x,s),u,Du)\cdot\nabla\eta dxds-\int_{\tau}^{t}\Xint-_{B_\rho}b_i((x,s),u,Du)\cdot\eta dxds.
\end{equation}
In virtue of  (\ref{2.5}), (\ref{4.10}), and note that $t,\tau\in \Lambda_\rho$,   we infer that
\begin{align}\label{4.91}
|(u_i)_\eta(\tau)-(u_i)_\eta(t)|&\leq L\|\nabla\eta\|_{L^{\infty}}\rho^2\Xint-_{Q_\rho}(1+|Du|)^{p-1}dz+\rho^2\|\eta\|_{L^{\infty}}\Xint-_{Q_\rho}(1+|Du|)^{p-1} dz+\rho^2\|\eta\|_{L^{\infty}}\Xint-_{Q_\rho}|u|^{q_1}dz\nonumber\\
& \leq c(n,L)\rho\Xint-_{Q_\rho}(1+|Du|)^{p-1} dz+\rho^2\|\eta\|_{L^{\infty}}\Xint-_{Q_\rho}|u|^{q_1}dz.
\end{align}
Now we focus our attention on estimating the term $W$. Employing  interpolation inequality (G-N-S inequality), it holds that
\begin{align}\label{4.92}
\Xint-_{Q_\rho}|u|^{q_1}dz\leq & \Xint-_{Q_\rho} 1+|u|^{\frac{n+2}{n}p} dz\nonumber\\
  \leq& 1+c\frac{1}{|Q_{\rho}|}\int_{\Lambda_{\rho}}\|u\|_{L^2(B_{\rho})}^{\frac{(n+2)p\theta_1}{n}}(\|u\|_{L^p(B_{\rho})}^{\frac{(n+2)p(1-\theta_1)}{n}}
  +\|Du\|_{L^p(B_{\rho})}^{\frac{(n+2)p(1-\theta_1)}{n}})\nonumber\\
 \leq& 1+c\sup_{t\in \Lambda_{\rho}}\|u\|_{L^2(B_{\rho})}^{\frac{(n+2)p\theta_1}{n}}\Xint-_{Q_\rho}|Du|^pdz+\frac{1}{|Q_{\rho}|}
 \underbrace{c\int_{\Lambda_{\rho}}\|u\|_{L^2(B_{\rho})}^{\frac{(n+2)p\theta_1}{n}}|B_{\rho}|^{\frac{2}{n+2}}
 \left(\int_{B_{\rho}}|u|^{\frac{n+2}{n}p}dx\right)^{\frac{n}{n+2}}dt}_{W_1},
\end{align}
where  in the last inequality we have taken into account that
\begin{equation}\label{3.123}
  \frac{n+2}{n}p(1-\theta_1)=p,\quad \theta_1=\frac{2}{n+2}.
\end{equation}
It is clearly that the term $W_1$ can be split as
 \begin{align}\label{4.93}
    W_1=&\int_{J_1}(\cdots)dt+ \int_{J_2}(\cdots)dt\nonumber \\
    :=&W_{11} +W_{12},
 \end{align}
where $\Lambda_{\rho}=J_1\cup J_2$, and
\begin{align*}
  J_1:= & \left\{t\in \Lambda_{\rho}:\left(c\|u\|_{L^2(B_{\rho})}^{\frac{(n+2)p\theta_1}{n}}\right)^{\frac{n+2}{2}}
  |B_{\rho}|\geq (\frac{1}{2})^{\frac{n+2}{2}}\int_{B_{\rho}}|u|^{\frac{n+2}{n}p}dx\right\}, \\
  J_2:= & \left\{t\in \Lambda_{\rho}:\left(c\|u\|_{L^2(B_{\rho})}^{\frac{(n+2)p\theta_1}{n}}\right)^{\frac{n+2}{2}}
  |B_{\rho}|<(\frac{1}{2})^{\frac{n+2}{2}}\int_{B_{\rho}}|u|^{\frac{n+2}{n}p}dx\right\}.
\end{align*}
Thus, we are in a position to obtain
\begin{equation}\label{4.94}
 W_{11}\leq c|Q_{\rho}|,
\end{equation}
and by iteratively estimating, we have
\begin{align}\label{4.95}
  W_{12}\leq& \frac12\int_{J_2}\int_{B_{\rho}}|u|^{\frac{n+2}{n}p} dz\nonumber \\
   \leq& \frac12 c\int_{J_2}\|u\|_{L^2(B_{\rho})}^{\frac{(n+2)p\theta_1}{n}}(\|u\|_{L^p(B_{\rho})}^{\frac{(n+2)p(1-\theta_1)}{n}}
  +\|Du\|_{L^p(B_{\rho})}^{\frac{(n+2)p(1-\theta_1)}{n}})dt\nonumber \\
 \leq& \frac12 c\sup_{t\in \Lambda_{\rho}} \|u\|_{L^2(B_{\rho})}^{\frac{(n+2)p\theta_1}{n}}\int_{Q_{\rho}}|Du|^pdz
 +\frac12 c \int_{J_2}\|u\|_{L^2(B_{\rho})}^{\frac{(n+2)p\theta_1}{n}}|B_{\rho}|^{\frac{2}{n+2}}
 \left(\int_{B_{\rho}}|u|^{\frac{n+2}{n}p}dx\right)^{\frac{n}{n+2}}dt\nonumber \\
 \leq& \frac12 c\sup_{t\in \Lambda_{\rho}} \|u\|_{L^2(B_{\rho})}^{\frac{(n+2)p\theta_1}{n}}\int_{Q_{\rho}}|Du|^pdz
 +\frac14\int_{J_2}\int_{B_{\rho}}|u|^{\frac{n+2}{n}p}dz\nonumber \\
 \vdots&\nonumber \\
 \leq&c\sup_{t\in \Lambda_{\rho}} \|u\|_{L^2(B_{\rho})}^{\frac{(n+2)p\theta_1}{n}}\int_{Q_{\rho}}|Du|^pdz.
\end{align}
Plugging (\ref{4.93})-(\ref{4.95}) into (\ref{4.92}), we conclude that
\begin{equation}\label{4.96}
\Xint-_{Q_{\rho}}|u|^{q_1}dz\leq c(n,p)\left(1+\Xint-_{Q_{\rho}}|Du|^pdz\right).
\end{equation}
Now, combining (\ref{4.96}) and (\ref{4.91}), and summing up over $i=1,\cdots,N$,  then we have  (\ref{4.7}). Hence, it remains to prove  (\ref{4.8}).

Observing that
\begin{equation*}
\int_{\tau}^{t}\Xint-_{B_\rho}a_i((x_0,t),(u)_\rho,(Du)_\rho)\cdot\nabla\eta dxds=0,
\end{equation*}
Making use of (\ref{4.10}), then we infer that
\begin{align}\label{4.11}
|(u_i)_\eta(t)-(u_i)_\eta(\tau)|\leq& \rho^2\|\nabla\eta\|_{L^{\infty}} \left[\Xint-_{Q_\rho}|a(z,u,Du)-a(z,u,(Du)_\rho)|dz\right. \nonumber\\
& +\Xint-_{Q_\rho}|a(z,u,(Du)_{\rho})-a(z,(u)_\rho,(Du)_\rho)|dz\nonumber\\
&\left. +\Xint-_{Q_\rho}|a(z,(u)_\rho,(Du)_\rho)-a((x_0,t),(u)_\rho,(Du)_\rho)|dz\right]+\rho^2\|\eta\|_{L^{\infty}}\Xint-_{Q_\rho}|b(z,u,Du)|dz\nonumber\\
 :=&\rho^2\|\nabla\eta\|_{L^{\infty}}(K_1+K_2+K_3)+\rho^2\|\eta\|_{L^{\infty}} K_4.
\end{align}
Applying (\ref{2.01}) and Lemma \ref{le2.1}, for the term $K_1$, we have
\begin{align}\label{4.12}
 K_1=&\Xint-_{Q_\rho}\left|\int_{0}^{1}\partial_F a(z,u,(Du)_\rho+s(Du-(Du)_\rho))\cdot(Du-(Du)_\rho)ds\right|dz\nonumber\\
 \leq& L\Xint-_{Q_\rho}\int_{0}^{1}(1+|(Du)_\rho+s(Du-(Du)_\rho)|)^{p-2}ds|Du-(Du)_\rho|dz\nonumber\\
 \leq& c(p)L\Xint-_{Q_\rho}(1+|(Du)_\rho|)^{p-2}|Du-(Du)_\rho|+|Du-(Du)_\rho|^{p-1}dz.
\end{align}
In addition, making use of (\ref{2.03}) and Jensen's inequality, the term $K_2$ and $K_3$ can be estimated as
\begin{align}\label{4.13}
  K_2+K_3&=L(1+|(Du)_{\rho}|)^{p-1}\left[\Xint-_{Q_\rho}\omega(|u-(u)_\rho|^2)dz+\omega(\rho^2)\right]\nonumber\\
 & \leq L(1+|(Du)_{\rho}|)^{p-1}\left[\omega\left(\Xint-_{Q_\rho}|u-(u)_\rho|^2dz\right)+\omega(\rho^2)\right].
\end{align}
For the term $K_4$, in view of (\ref{4.10})-(\ref{4.91}) and  (\ref{4.96}), we have
\begin{equation}\label{4.15}
K_4\leq c(n,p,L)\left[(1+|(Du)_\rho|)^{p}+\Xint-_{Q_\rho}|Du-(Du)_\rho|^{p-1}dz+\Xint-_{Q_\rho}|Du-(Du)_\rho|^{p}dz\right].
\end{equation}
Inserting (\ref{4.12})-(\ref{4.15}) into (\ref{4.11}), summing up over $i=1,\cdots,N$, whence  (\ref{4.8}).

Now, we turn to prove  (\ref{4.1})-(\ref{4.2}).  First, appealing to  (\ref{4.7}), Poincar\'{e}'s inequality with weighed function, H\"{o}lder's inequality, we infer that
\begin{align*}
\Xint-_{Q_\rho}\left|\frac{u-(u)_\rho}{\rho}\right|^q dz
&\leq 3^{q-1}\left[\Xint-_{Q_\rho}\left|\frac{u-(u)_{\eta}(t)}{\rho}\right|^qdz+\rho^{-q}\left|\Xint-_{\Lambda_{\rho}}(u)_{\eta}(t)dt
-\Xint-_{\Lambda_{\rho}}(u)_{\eta}(\tau)d\tau\right|^q+\rho^{-q}\left|\Xint-_{Q_{\rho}}u-(u)_{\eta} d\tilde{z}\right|^q\right]\nonumber\\
&\leq 3^{p-1}\left[2c(n,q)\Xint-_{Q_\rho}|Du|^q dz+\rho^{-q} \sup_{t,\tau\in \Lambda_\rho}|(u)_\eta(t)-(u)_\eta(\tau)|^q\right]\nonumber\\
& \leq c\left[\left(\Xint-_{Q_\rho}|Du|^p dz\right)^{\frac{q}{p}}+\left(\Xint-_{Q_\rho}(1+|Du|)^{p-1} dz\right)^q
+\rho^q\left(\Xint-_{Q_\rho}(1+|Du|)^{p} dz\right)^q\right]\nonumber\\
&\leq c\left(\Xint-_{Q_\rho}(1+|Du|)^{p} dz\right)^q,
\end{align*}
where $\tilde{z}=(x,\tau)$ and $c=c(n,N,p,L)$. Thus, we have (\ref{4.1}).

Next, by (\ref{4.8}),  Poincar\'{e} and H\"{o}lder's inequality, we obtain
\begin{align}\label{4.012}
\Xint-_{Q_\rho}\left|\frac{u-(u)_\rho-(Du)_\rho x}{\rho}\right|^p dz
\leq& 3^{p-1}\left[\Xint-_{Q_\rho}\left|\frac{u-(u)_\eta(t)-(Du)_\rho x}{\rho}\right|^p dz+\rho^{-p}\left|\Xint-_{\Lambda_\rho}(u)_\eta(t)dt-\Xint-_{\Lambda_\rho}(u)_\eta(\tau)d\tau\right|^p\right.\nonumber\\
& \left.+\rho^{-p}\left|\Xint-_{\Lambda_\rho}(u)_\eta(\tau)d\tau-(u)_\rho\right|^p \right]\nonumber\\
\leq& 3^{p-1}\left[c(n,p)\Xint-_{Q_\rho}|Du-(Du)_\rho|^p dz+\rho^{-p} \sup_{t,\tau\in \Lambda_\rho}|(u)_\eta(t)-(u)_\eta(\tau)|^p\right.\nonumber\\
&\left. +\rho^{-p}\left|\Xint-_{Q_\rho}u-(u)_\eta(\tau)-(Du)_\rho x d\tilde{z}\right|^p\right]\nonumber\\
\leq &c(n,p)\left[\Xint-_{Q_\rho}|Du-(Du)_\rho|^p dz+\rho^{-p} \sup_{t,\tau\in \Lambda_\rho}|(u)_\eta(t)-(u)_\eta(\tau)|^p\right]\nonumber\\
\leq& c\Xint-_{Q_\rho}|Du-(Du)_\rho|^p dz \left[(1+|(Du)_\rho|)^p+\Xint-_{Q_\rho}|Du-(Du)_\rho|^p dz\right]^{p-2}\nonumber\\
& +c(1+|(Du)_\rho|)^{p(p-1)}\left[\omega\left(\Xint-_{Q_\rho}|u-(u)_\rho|^2 dz\right)+\omega(\rho^2)\right]^p\nonumber\\
& +c\rho^p\left[(1+|(Du)_\rho|)^{p^2}+\left(\Xint-_{Q_\rho}|Du-(Du)_{\rho}|^p dz\right)^p\right]
\end{align}
with $c=c(n,N,p,L)$, where in the second inequality, we have used the Poincar\'{e}'s inequality for a.e. $t\in \Lambda_{\rho}$ and the  fact
 \begin{equation*}
 \Xint-_{Q_\rho}(Du)_\rho xdz=0.
\end{equation*}
Taking into account the concavity of $\omega(\cdot)$ and (\ref{4.1}) for $q=2$ implies
\begin{align}\label{4.013}
\omega\left(\Xint-_{Q_\rho}|u-(u)_\rho|^2 dz\right)+\omega(\rho^2)&\leq\omega\left(c\rho^2\left(\Xint-_{Q_\rho}(1+|Du|)^p dz\right)^2\right)\nonumber\\
&\leq c\left(\Xint-_{Q_\rho}(1+|Du|)^p dz\right)^2 \omega(\rho^2).
\end{align}
Thus, Combining  (\ref{4.012}) and (\ref{4.013}), we are in a position to obtain
\begin{align*}
\Xint-_{Q_\rho}\left|\frac{u-(u)_\rho-(Du)_\rho x}{\rho}\right|^p dz\leq& c\Xint-_{Q_\rho}|Du-(Du)_\rho|^p dz \left[(1+|(Du)_\rho|)^p+\Xint-_{Q_\rho}|Du-(Du)_\rho|^p dz\right]^{p-2}\nonumber\\
& +c(1+|(Du)_\rho|)^{p(p-1)}\left[\omega(\rho^2)\right]^p\left(\Xint-_{Q_\rho}(1+|Du|)^p dz\right)^{2p}\nonumber\\
& +c\rho^p\left[(1+|(Du)_\rho|)^{p^2}+\left(\Xint-_{Q_\rho}|Du-(Du)_\rho|^p dz\right)^p\right],
\end{align*}
whence (\ref{4.2}).
\end{proof}
\section {Partial regularity of $u$}\label{se5}
According to Lemma \ref{le3.1}, now, we define some excess functionals. For reference point $z_0=(x_0,t_0)\in Q_T$,
$u\in L^p(-T,0;W^{1,p}(\Omega;\mathbb{R}^N))$, affine function $l:\mathbb{R}^n\rightarrow \mathbb{R}^N$, and $l(z)=l(x)$, in what follows, we  denote
\begin{equation*}
\text{\emph{first\  order\ excess}:}\quad \Phi_\lambda(\rho)\equiv\Phi_\lambda(u;z_0,\rho,l)
:=\Xint-_{Q_{\rho}^{(\lambda)}(z_0)}\left[\frac{|u-l|^2}{\rho^2(1+|Dl|)^2}+\frac{|u-l|^p}{\rho^p(1+|Dl|)^p}\right]dz
\end{equation*}
\begin{equation*}
\text{\emph{zero\  order\ excess}:}\quad\Psi_\lambda(\rho)\equiv\Psi_\lambda(u;z_0,\rho,l(z_0)):=\Xint-_{Q_{\rho}^{(\lambda)}(z_0)}|u-l(z_0)|^2dz
\end{equation*}
and \emph{hybrid excess functional}:
\begin{equation*}
E_\lambda(\rho)\equiv E_\lambda(u;z_0,\rho,l):=\Phi_\lambda(\rho)+\omega\left(\Psi_\lambda(z_0,\rho,l(z_0))\right)+\omega(\rho^2)+\rho.
\end{equation*}
\subsection{Linearization}
The following lemma is a prerequisite for applying the $A-$caloric approximation technique.
\begin{Lemma}\label{le5.1}
Let $u\in L^p(-T,0;W^{1,p}(\Omega;\mathbb{R}^N))\cap C^{0}(-T,0;L^2(\Omega;\mathbb{R}^N))$ is a weak solution to (\ref{1.1}) in $Q_T$ under the assumption (\ref{2.01})-(\ref{2.5}) and $Q_{\rho}^{(\lambda)}(z_0)\subset Q_T$ is a parabolic cylinder with reference point $z_0=(x_0,t_0)$, $0<\rho\leq1$ and scaling factor $\lambda\geq1$. Let $l:\mathbb{R}^n\longrightarrow\mathbb{R}^N$ be any affine function. Then, there holds
\begin{align}\label{5.1}
&\left|\Xint-_{Q_{\rho/2}^{(\lambda)}(z_0)}\left[(u-l)\cdot\varphi_t-(\partial _F a(z_0,l(z_0),Dl)(Du-Dl),D\varphi)\right]dz\right|\nonumber\\
& \leq c(1+|Dl|^{p-1})\left[E_\lambda(\rho)+\mu(\sqrt{E_\lambda(\rho)})\right]^{\frac12}\sqrt{E_\lambda(\rho)}
 \sup_{z\in Q_{\rho/2}^{(\lambda)}(z_0)}|D\varphi|,
\end{align}
for all $\varphi\in C_{0}^{\infty}(Q_{\rho/2}^{(\lambda)}(z_0);\mathbb{R}^N)$ with $c=c(n,L,p,\nu)$.
\end{Lemma}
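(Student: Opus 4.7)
The plan is to start from the weak formulation (\ref{2.1}) of (\ref{1.1}), tested against $\varphi\in C_0^\infty(Q_{\rho/2}^{(\lambda)}(z_0);\mathbb{R}^N)$, and exploit two elementary cancellations. First, since the affine map $l(z)=l(x)$ is time-independent and $\varphi$ has compact support in time, $\int_{Q_{\rho/2}^{(\lambda)}(z_0)} l\cdot\varphi_t\,dz = 0$, so the time-derivative term can be rewritten with $u-l$ in place of $u$. Second, because $a(z_0,l(z_0),Dl)$ is constant in $z$ and $\int D\varphi\,dz=0$, one may freely subtract $a(z_0,l(z_0),Dl)\cdot D\varphi$ from the spatial flux term. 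After these manipulations the claim reduces to bounding
\[
\int_{Q_{\rho/2}^{(\lambda)}(z_0)}\bigl[a(z,u,Du)-a(z_0,l(z_0),Dl)-\partial_F a(z_0,l(z_0),Dl)(Du-Dl)\bigr]\cdot D\varphi\,dz
-\int_{Q_{\rho/2}^{(\lambda)}(z_0)} b(z,u,Du)\cdot\varphi\,dz.
\]

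For the flux integrand I use the splitting
\begin{align*}
a(z,u,Du)&-a(z_0,l(z_0),Dl)-\partial_F a(z_0,l(z_0),Dl)(Du-Dl)\\
&= \underbrace{\bigl[a(z,u,Du)-a(z_0,l(z_0),Du)\bigr]}_{T_1}
 +\underbrace{\int_0^1\!\bigl[\partial_F a(z_0,l(z_0),Dl+s(Du-Dl))-\partial_F a(z_0,l(z_0),Dl)\bigr](Du-Dl)\,ds}_{T_2},
\end{align*}
which neatly separates the $(z,u)$-dependence from the $F$-dependence. The term $T_1$ is controlled via (\ref{2.03}) by $L\,\omega(d(z,z_0)^2+|u-l(z_0)|^2)(1+|Du|)^{p-1}$, and $T_2$ via (\ref{2.04}) by $cL\,\mu\!\bigl(|Du-Dl|/(1+|Du|+|Dl|)\bigr)(1+|Du|+|Dl|)^{p-2}|Du-Dl|$. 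After pulling out $\sup|D\varphi|$, applying H\"older's inequality, Jensen's inequality (concavity of $\omega,\mu$ and $\omega,\mu\leq 1$), the subadditivity $\omega(A+B)\leq\omega(A)+\omega(B)$, and the Caccioppoli estimate (Lemma \ref{le3.1}) to recast $\Xint|Du-Dl|^q/(1+|Dl|)^q$ in terms of $E_\lambda(\rho)$, the $T_1$-contribution is absorbed into $(1+|Dl|)^{p-1}[\omega(\Psi_\lambda)+\omega(\rho^2)]\leq(1+|Dl|)^{p-1}E_\lambda(\rho)$, while the $T_2$-contribution produces the factor $\mu(\sqrt{E_\lambda(\rho)})$ after a good-set/bad-set split relative to the threshold $\{|Du-Dl|/(1+|Du|+|Dl|)\leq\sqrt{E_\lambda(\rho)}\}$.

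The inhomogeneity $-\int b(z,u,Du)\cdot\varphi\,dz$ is treated via the Poincar\'e-type bound $\|\varphi\|_{L^\infty}\leq c\rho\sup|D\varphi|$ (valid because $\varphi$ vanishes on the spatial boundary), combined with (\ref{2.5}). The critical-growth piece $|u|^{q_1}$ is handled exactly as in the proof of Lemma \ref{le3.1} via the weighted Sobolev interpolation (\ref{1.3}) to obtain $\Xint|u|^{q_1}\leq c(1+\Xint|Du|^p)\leq c(1+|Dl|)^p[1+E_\lambda(\rho)]$; together with the prefactor $\rho$ this contributes a term of the form $c\rho(1+|Dl|)^{p-1}\sup|D\varphi|$ that is absorbed into $E_\lambda(\rho)$ through the $\rho$-summand already present in its definition. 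Collecting all estimates, dividing by $(1+|Dl|)^{p-1}$, and using the trivial inequality $A\leq\sqrt{A}\sqrt{A+B}$ to recast $E_\lambda(\rho)$ as $\sqrt{E_\lambda(\rho)}\sqrt{E_\lambda(\rho)+\mu(\sqrt{E_\lambda(\rho)})}$ yields (\ref{5.1}).

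The main obstacle will be executing the $T_2$ estimate so that the superlinear integrand $(1+|Du|+|Dl|)^{p-2}|Du-Dl|$ is absorbed into powers of $(1+|Dl|)^{p-1}\sqrt{E_\lambda(\rho)}$ without leaving an unbalanced $|Du-Dl|$-tail; the good-set/bad-set split at the scale $\sqrt{E_\lambda(\rho)}$, together with Caccioppoli's simultaneous control of the $L^2$- and $L^p$-components of $\Phi_\lambda$, resolves this, but requires tracking the exponents $2$ and $p$ in parallel and keeping careful account of the intrinsic time-scaling $\lambda^{2-p}\rho^2$ so that averages on $Q_{\rho/2}^{(\lambda)}$ really reduce to $E_\lambda(\rho)$ rather than an uncontrolled $\lambda$-dependent quantity.
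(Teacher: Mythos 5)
Your overall skeleton (the two cancellations, the splitting of the flux error into a $(z,u)$-part $T_1$ and an $F$-linearization part $T_2$, and the treatment of $b$ via $\sup|\varphi|\le c\rho\sup|D\varphi|$ plus the interpolation bound for $\Xint-|u|^{q_1}$) is exactly the paper's decomposition ($T_1$ is its $I_2+I_3$, $T_2$ is its $I_1$, the $b$-term is $I_4$). The genuine gap is in your mechanism for producing the factor $\mu(\sqrt{E_\lambda(\rho)})$ in the $T_2$-estimate. On the ``bad'' set $\{|Du-Dl|/(1+|Du|+|Dl|)>\sqrt{E_\lambda(\rho)}\}$ you must discard $\mu\le 1$, and the only information available (from Caccioppoli) is $\Xint- |Du-Dl|^2/(1+|Dl|)^2\le cE_\lambda(\rho)$; since the threshold squared is exactly $E_\lambda(\rho)$, Chebyshev only bounds the bad-set fraction by an absolute constant, and the bad-set contribution of $(1+|Du|+|Dl|)^{p-2}|Du-Dl|$ comes out as $c(1+|Dl|)^{p-1}\sqrt{E_\lambda(\rho)}$ — without the vanishing bracket. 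But $\sqrt{E}\not\le c\,[E+\mu(\sqrt{E})]^{1/2}\sqrt{E}$ as $E\to0$, so the split at level $\sqrt{E_\lambda(\rho)}$ cannot yield (\ref{5.1}); and this bracket is not cosmetic, it is precisely what lets the smallness condition (\ref{5.16}) for the $A$-caloric approximation be met later without wrecking the constants.

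The paper closes this step differently, and you should too: estimate $I_1$ by H\"older with the exponent pairs $(2,2)$ and $\bigl(p,\tfrac{p}{p-1}\bigr)$ applied to $\mu(\cdot)\,\tfrac{|Du-Dl|}{1+|Dl|}$ and $\mu(\cdot)\,\tfrac{|Du-Dl|^{p-1}}{(1+|Dl|)^{p-1}}$, then use $\mu^{s}\le\mu$ (as $\mu\le1$) and Jensen's inequality for the concave $\mu$ to get $\Xint-\mu(\tfrac{|Du-Dl|}{1+|Dl|})\,dz\le\mu\bigl(\Xint-\tfrac{|Du-Dl|}{1+|Dl|}\,dz\bigr)\le\mu\bigl(c\sqrt{E_\lambda(\rho)}\bigr)\le c\,\mu\bigl(\sqrt{E_\lambda(\rho)}\bigr)$; combined with Caccioppoli on the excess factors this produces exactly $c(1+|Dl|)^{p-1}[\mu(\sqrt{E_\lambda(\rho)})+E_\lambda(\rho)]^{1/2}\sqrt{E_\lambda(\rho)}$ (your good-set computation survives as the easy half, but the Jensen step replaces the bad set). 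A second, smaller point: for the $b$-term, bounding $|u|^{q_1}$ by $1+|u|^{\frac{n+2}{n}p}$ and invoking (\ref{4.96}) leaves you with a prefactor $(1+|Dl|)^{p}$, which is not absorbed by the claimed $(1+|Dl|^{p-1})$; use instead $|u|^{q_1}\le 1+|u|^{\frac{n+2}{n}(p-1)}$ and the corresponding $(p-1)$-interpolation as in the paper's estimate of $I_4$, so that only $(1+|Dl|)^{p-1}$ and the normalized excess appear before absorbing the $\rho$-factor into $E_\lambda(\rho)$.
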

\begin{proof} Without loss of generality, we may assume $\sup_{Q_{\rho/2}^{(\lambda)}(z_0)}|D\varphi|\leq1$ and we also denote $Q^{\lambda}_\rho$, $B_\rho$, $\Lambda_\rho$ instead of $Q^{\lambda}_\rho(z_0)$, $B_\rho(x_0)$, $\Lambda_\rho(t_0)$, respectively,  if there is no danger of any confusion. Note that
\begin{equation*}
\Xint-_{Q_{\rho/2}^{(\lambda)}} l\cdot \varphi_t dz=0\ \ \ \text{and}\ \ \  \Xint-_{Q_{\rho/2}^{(\lambda)}} a(z_0,l(z_0),Dl)\cdot D\varphi dz=0,
\end{equation*}
then, from weak formulation (\ref{2.1}), we deduce that
\begin{align}\label{5.2}
 \Xint-_{Q_{\rho/2}^{(\lambda)}}&[(u-l)\cdot \varphi_t-\partial _F a(z_0,l(z_0),Dl)(Du-Dl,D\varphi)]dz\nonumber\\
 = &\Xint-_{Q_{\rho/2}^{(\lambda)}}[( a(z_0,l(z_0),Du)- a(z_0,l(z_0),Dl))\cdot D\varphi
 -\partial _F a(z_0,l(z_0),Dl)(Du-Dl,D\varphi)]dz\nonumber\\
& +\Xint-_{Q_{\rho/2}^{(\lambda)}}( a(z,u,Du)- a(z,l(z_0),Du))\cdot D\varphi dz
 +\Xint-_{Q_{\rho/2}^{(\lambda)}}( a(z,l(z_0),Du)- a(z_0,l(z_0),Du))\cdot D\varphi dz\nonumber\\
&-\Xint-_{Q_{\rho/2}^{(\lambda)}}b(z,u,Du)\cdot\varphi dz
:=I_1+I_2+I_3+I_4.
\end{align}
Now, we start  to estimate $I_1$--$I_4$. For the term $I_1$, applying (\ref{2.04}), the  H\"{o}lder and Young's inequality, we have
\begin{align}\label{5.3}
|I_1|\leq&\Xint-_{Q_{\rho/2}^{(\lambda)}}\int_{0}^{1}\left|\partial_F a(z_0,l(z_0),Dl+s(Du-Dl))-\partial_F a(z_0,l(z_0),Dl)\right|ds|Du-Dl|dz\nonumber\\
\leq& c\Xint-_{Q_{\rho/2}^{(\lambda)}}\mu\left(\frac{|Du-Dl|}{1+|Dl|}\right)(1+|Dl|+|Du-Dl|)^{p-2}|Du-Dl|dz\nonumber\\
 \leq& c(1+|Dl|)^{p-1}\Xint-_{Q_{\rho/2}^{(\lambda)}}\mu\left(\frac{|Du-Dl|}{1+|Dl|}\right)\frac{|Du-Dl|}{1+|Dl|}dz\nonumber\\
 &+c(1+|Dl|)^{p-1}\Xint-_{Q_{\rho/2}^{(\lambda)}}\mu\left(\frac{|Du-Dl|}{1+|Dl|}\right)\frac{|Du-Dl|^{p-1}}{(1+|Dl|)^{p-1}}dz\nonumber\\ \leq&c(1+|Dl|)^{p-1}\left(\Xint-_{Q_{\rho/2}^{(\lambda)}}\mu^2\left(\frac{|Du-Dl|}{1+|Dl|}\right)dz\right)^{\frac{1}{2}}
 \left(\Xint-_{Q_{\rho/2}^{(\lambda)}}\frac{|Du-Dl|^2}{(1+|Dl)^2}dz\right)^{\frac{1}{2}}\nonumber\\
& + \nonumber c(1+|Dl|)^{p-1}\left(\Xint-_{Q_{\rho/2}^{(\lambda)}}\mu^{p}\left(\frac{|Du-Dl|}{1+|Dl|}\right)dz\right)^{\frac{1}{p}}
\left(\Xint-_{Q_{\rho/2}^{(\lambda)}}\frac{|Du-Dl|^p}{(1+|Dl)^p}dz\right)^{1-\frac{1}{p}}\nonumber\\
\leq& c(1+|Dl|)^{p-1}[\mu(\sqrt{E_\lambda(\rho)})+E_{\lambda}(\rho)]^{\frac12}\sqrt{E_\lambda(\rho)},
\end{align}
with $c=c(n,p,L,\nu)$, where in the last inequality, we have taken into account the Caccioppoli's type inequality (\ref{3.1}), Jensen's inequality for $\mu(\cdot)$ and $\mu^s(\cdot)\leq\mu(\cdot)$ for $s=2$ or $p$.

Likewise, applying (\ref{2.03}), Young's inequality, and note that $z\in Q_{\rho/2}^{(\lambda)}(z_0)$, then $I_2$ and  $I_3$ can be estimated as
\begin{align}\label{5.4}
  |I_2|+|I_3|\leq& L\Xint-_{Q_{\rho/2}^{(\lambda)}}[\omega(|u-l(z_0)|^2)+\omega(d(z,z_0)^2)](1+|Du|)^{p-1}dz\nonumber\\
\leq & c(p,L)\Xint-_{Q_{\rho/2}^{(\lambda)}}\left[\omega(|u-l(z_0)|^2)+\omega(\rho^2)\right]\left[(1+|Dl|)^{p-1}+|Du-Dl|^{p-1}\right]dz\nonumber\\
\leq & c(1+|Dl|)^{p-1}\left[\Xint-_{Q_{\rho/2}^{(\lambda)}}\omega (|u-l(z_0)|^2)dz+\omega(\rho^2)\right]\nonumber\\
&  + c(1+|Dl|)^{p-1}\Xint-_{Q_{\rho/2}^{(\lambda)}}[\omega (|u-l(z_0)|^2)+\omega(\rho^2)]\frac{|Du-Dl|^{p-1}}{(1+|Dl|)^{p-1}}dz\nonumber\\
\leq &c(1+|Dl|)^{p-1}\left[\Xint-_{Q_{\rho/2}^{(\lambda)}}\omega (|u-l(z_0)|^2)dz+\omega(\rho^2)\right]\nonumber\\
& +c(1+|Dl|)^{p-1}\left[\Xint-_{Q_{\rho/2}^{(\lambda)}}\omega (|u-l(z_0)|^2)+\omega(\rho^2)dz+\Xint-_{Q_{\rho/2}^{(\lambda)}}\frac{|Du-Dl|^{p}}{(1+|Dl|)^{p}}dz\right]\nonumber\\
\leq&  c(1+|Dl|^{p-1})E_\lambda(\rho),
\end{align}
where $c=c(n,p,\nu,L)$.

Taking into account the fact $\sup_{Q_{\rho/2}^{(\lambda)}(z_0)}|\varphi|\leq\rho\leq1$, similar with (\ref{4.92}),  we  infer that
\begin{align}\label{5.5}
|I_4|&\leq L\rho\Xint-_{Q_{\rho/2}^{(\lambda)}}(1+|Du|)^{p-1}dz+\rho\Xint-_{Q_{\rho/2}^{(\lambda)}}|u|^{q_1}dz\nonumber\\
&\leq \underbrace{c(L,p)\rho\Xint-_{Q_{\rho/2}^{(\lambda)}}(1+|Dl|)^{p-1}+|Du-Dl|^{p-1}dz}_{I_{41}}
+\rho\Xint-_{Q_{\rho/2}^{(\lambda)}}1+|u|^{\frac{n+2}{n}(p-1)}dz\nonumber\\
&\leq  I_{41}+\rho
+\frac{c(n,p)}{|Q_{\rho/2}^{(\lambda)}|}\rho\int_{\Lambda_{\rho/2}^{(\lambda)}}\|u\|^{\frac{n(n+2)(p-1)\theta_2}{n}}_{L^2(B_{\rho/2})}
(\|u\|^{\frac{n(n+2)(p-1)(1-\theta_2)}{n}}_{L^{p-1}(B_{\rho/2})}+\|Du\|^{\frac{n(n+2)(p-1)(1-\theta_2)}{n}}_{L^{p-1}(B_{\rho/2})})dt\nonumber\\
&\leq I_{41}+\rho+c(n,p)\sup_{t\in \Lambda_{\rho/2}^{(\lambda)}}\|u\|^{\frac{n(n+2)(p-1)\theta_2}{n}}_{L^2(B_{\rho/2})}\rho
\Xint-_{Q_{\rho/2}^{(\lambda)}}(|u|^{p-1}+|Du|^{p-1})dz\nonumber\\
&\leq I_{41}+c(n,p)\rho+c(n,p)\rho\Xint-_{Q_{\rho/2}^{(\lambda)}}|Du|^{p-1}dz\nonumber\\
&\leq c(L,n,p)(1+|Dl|)^{p-1}\rho+c(n,p)(1+|Dl|)^{p-1}\rho\Xint-_{Q_{\rho/2}^{(\lambda)}}|Du-Dl|^{p}dz\nonumber\\
&\leq c(1+|Dl|)^{p-1}E_\lambda(\rho),
\end{align}
where $c=c(n,p,L,\nu)$ and $\theta_2\in (0,1)$ is same with $\theta_1$ in (\ref{3.123}).

Plugging (\ref{5.3})-(\ref{5.5})  into (\ref{5.2}), then we have
\begin{equation*}
\Xint-_{Q_{\rho/2}^{(\lambda)}}[(u-l)\cdot \varphi_t-\partial_F a(z_0,l(z_0),Dl)(Du-Dl,D\varphi)]dz\leq c(1+|Dl|^{p-1})[E_\lambda(\rho)+\mu(\sqrt{E_\lambda(\rho)})]^{\frac12}\sqrt{E_\lambda(\rho)},
\end{equation*}
where $c=c(n,p,\nu,L)$. By scaling argument for general, then we have (\ref{5.1}).
\end{proof}
\subsection{Decay estimate}
The aim of this section is to provide a decay estimate of $\Phi_{\lambda_j}(z_0,\vartheta^{j}\rho,l_j)$ with $\lambda_j$, $\vartheta$, $l_j$ will be specified in later, from which we can obtain a Campanato type estimate of weak solution $u$ to (\ref{1.1}), then we deduce the regularity of $u$ by a standard argument of  Campanato space. First, we introduce  a standard estimate for weak solution to linear parabolic systems with constant coefficients (cf. \cite{A10} Lemma 5.1), which is necessary in the proof of decay estimate  of $\|u-(u)_{z_0,r}\|_{L^2(Q_r(z_0))}$.
\begin{Lemma}\label{le5.2}
Let $h\in L^2(\Lambda_\rho(t_0);W^{1,2}(B_\rho(x_0);\mathbb{R}^N))$ be a weak solution in $Q_\rho(z_0)$ of the following linear parabolic system with constant coefficients
\begin{equation}\label{5.05}
\Xint-_{Q_{\rho}(z_0)}(h\cdot\varphi_t-A(Dh,D\varphi))dz=0,
\end{equation}
for all $\varphi\in C_{0}^{\infty}(Q_\rho(z_0);\mathbb{R}^N)$, where the coefficients $A$ satisfy
\begin{equation*}
 A(F,F)\geq\nu|F|^2,\ \ \  A(F,\widetilde{F})\leq L |F||\widetilde{F}|,
\end{equation*}
for any $F,\widetilde{F}\in \mathbb{R}^{Nn}$. Then, $h$ is smooth in $Q_\rho(z_0)$ and for all $s\geq1$, $\theta\in(0,1]$, there holds
\begin{align*}
(\theta\rho)^{-s}\Xint-_{Q_{\theta\rho}(z_0)}&|h-(h)_{z_0,\theta\rho}-(Dh)_{z_0,\theta\rho}(x-x_0)|^sdz\nonumber\\
&\leq c_{pa}\theta^s\rho^{-s}\Xint-_{Q_{\rho}(z_0)}|h-(h)_{z_0,\rho}-(Dh)_{z_0,\rho}(x-x_0)|^sdz,
\end{align*}
for a constant $c_{pa}=c_{pa}(n,N,L/\nu)\geq1$.
\end{Lemma}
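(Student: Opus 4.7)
The plan is to exploit the smoothness of $h$ together with the fact that on $Q_{\theta\rho}(z_0)$ the comparison affine polynomial $P_{\theta\rho}(x) := (h)_{z_0,\theta\rho} + (Dh)_{z_0,\theta\rho}(x-x_0)$ is only an $O((\theta\rho)^2)$--perturbation of the first-order spatial Taylor polynomial $T_0(x) := h(z_0) + Dh(z_0)(x-x_0)$ of $h$ at $z_0$. Rewriting the desired inequality (multiplying through by $(\theta\rho)^s$) reduces it to
\begin{equation*}
\Xint-_{Q_{\theta\rho}(z_0)} |h - P_{\theta\rho}|^s\,dz \leq c\,\theta^{2s} \Xint-_{Q_\rho(z_0)} |h - P_\rho|^s\,dz,
\end{equation*}
so what is really needed is the decay factor $\theta^{2s}$; this will come from the quadratic remainder in Taylor's theorem combined with an interior $L^\infty$--$L^s$ estimate for the constant-coefficient linear system.

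First I would use that, since $A$ has constant coefficients, the classical Nirenberg difference-quotient method gives $h \in C^\infty(Q_\rho(z_0))$, and every spatial or time derivative of $h$ is again a weak solution of (\ref{5.05}) on the same cylinder. Any affine $\tilde l(x)$ independent of $t$ trivially solves (\ref{5.05}), hence so does $h-\tilde l$, and a standard Moser/Caccioppoli interior estimate then produces, for every $s \geq 1$,
\begin{equation*}
\sup_{Q_{\rho/2}(z_0)} \bigl(|h-\tilde l| + \rho|Dh - D\tilde l| + \rho^2|D^2 h| + \rho^2|\partial_t h| + \rho^3|\partial_t Dh|\bigr) \leq c\Bigl(\Xint-_{Q_\rho(z_0)} |h-\tilde l|^s\,dz\Bigr)^{1/s},
\end{equation*}
with $c=c(n,N,L/\nu,s)$. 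Choosing $\tilde l = P_\rho$ and setting $R := \bigl(\Xint-_{Q_\rho}|h - P_\rho|^s\,dz\bigr)^{1/s}$, this yields the master bound $\rho^2 M \leq c R$, where $M := \sup_{Q_{\rho/2}}\bigl(|D^2 h| + |\partial_t h| + \rho|\partial_t Dh|\bigr)$.

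It is enough to treat $\theta \in (0,\tfrac12]$, the range $\theta \in (\tfrac12,1]$ being trivial once $c_{pa}$ is taken large. On $Q_{\theta\rho}(z_0)$, a first-order Taylor expansion of $h$ at $z_0$, splitting off the time increment and using the PDE to convert time derivatives into spatial ones, gives $|h(x,t) - T_0(x)| \leq c(\theta\rho)^2 M$ and $|Dh(x,t) - Dh(z_0)| \leq c\theta\rho\,M$. Decomposing
\begin{equation*}
h - P_{\theta\rho} = [h - T_0] - \bigl[(h)_{z_0,\theta\rho} - h(z_0)\bigr] - \bigl[(Dh)_{z_0,\theta\rho} - Dh(z_0)\bigr](x-x_0),
\end{equation*}
each bracket is bounded by $c(\theta\rho)^2 M$ on $Q_{\theta\rho}$ --- the middle one by averaging the first, the third because its coefficient is the average of $Dh - Dh(z_0)$ and $|x-x_0|\leq \theta\rho$. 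Combining with $(\theta\rho)^2 M \leq c\theta^2 R$ gives $\sup_{Q_{\theta\rho}}|h - P_{\theta\rho}| \leq c\theta^2 R$; taking $s$-th powers, averaging over $Q_{\theta\rho}$, and multiplying by $(\theta\rho)^{-s}$ then delivers the claim. The one nontrivial input is the interior $L^\infty$--$L^s$ estimate above, which is a classical consequence of Moser iteration for constant-coefficient linear parabolic systems; this is precisely why the lemma is cited from \cite{A10} rather than re-derived here.
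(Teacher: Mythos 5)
You are reconstructing a result the paper itself never proves: Lemma \ref{le5.2} is quoted as a standard estimate from Campanato \cite{A10}, so the only question is whether your argument is a correct proof of it. Your main line is the classical one and is sound for $\theta\in(0,\tfrac12]$: every time-independent affine $\tilde l$ solves \eqref{5.05}, so $h-\tilde l$ is again a solution; constant-coefficient theory (difference quotients, Caccioppoli applied to the derivatives, Sobolev embedding) gives smoothness together with the interior bound $\sup_{Q_{\rho/2}}\bigl(|h-\tilde l|+\rho|Dh-D\tilde l|+\rho^2|D^2h|+\rho^2|\partial_t h|+\rho^3|\partial_t Dh|\bigr)\le c\bigl(\Xint-_{Q_\rho}|h-\tilde l|^s\,dz\bigr)^{1/s}$, and your Taylor/averaging computation correctly converts this into the $\theta^{2s}$ decay after multiplying the claim by $(\theta\rho)^s$. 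One caveat on the tool you name: Moser iteration is not available for systems (there is no maximum principle); the $L^\infty$--$L^s$ bound should instead be justified by exactly the smoothness machinery you already invoke (all derivatives of $h$ solve the same system), and by proving it for $s=1$ and using H\"older you can even make it uniform in $s\ge1$.

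The genuine gap is the assertion that the range $\theta\in(\tfrac12,1]$ is ``trivial once $c_{pa}$ is taken large.'' Setting $v:=h-(h)_{z_0,\rho}-(Dh)_{z_0,\rho}(x-x_0)$, the left-hand side contains the term $(\theta\rho)^s|(Dv)_{z_0,\theta\rho}|^s$, and since $(Dv)_{z_0,\rho}=0$ one has $(Dv)_{z_0,\theta\rho}=-|Q_{\theta\rho}(z_0)|^{-1}\int_{Q_\rho(z_0)\setminus Q_{\theta\rho}(z_0)}Dv\,dz$, an average over a thin layer hugging the parabolic boundary of $Q_\rho(z_0)$, where no interior estimate is available and where a Caccioppoli inequality degenerates as $\theta\to1$; this is not controlled by $\Xint-_{Q_\rho}|v|^s\,dz$ merely by enlarging a constant, so the large-$\theta$ case needs an actual argument (or the range of $\theta$ must be restricted). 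Relatedly, your final constant is of the form $c^s$ coming from raising the sup bound to the $s$-th power, hence $s$-dependent, whereas the statement asserts $c_{pa}=c_{pa}(n,N,L/\nu)$. Both points are harmless for this paper, which applies the lemma only in \eqref{5.19}--\eqref{5.20} with $s\in\{2,p\}$ and $\theta\in(0,\tfrac14]$, but as a proof of the lemma as stated they should be addressed rather than dismissed.
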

The  $A-$caloric  approximation lemma (Lemma \ref{le2.3}) allows one to translate these decay estimates on $h$ into a certain excess functional, e.g., $v$ in (\ref{5.012}). This eventually allows one to derive the partial regularity of $u$. Based on Lemma \ref{le5.1}-\ref{le5.2}, we have the following
result.
\begin{Lemma}\label{le5.3}(Decay estimate.)
Given  $\alpha\in (0,1)$ be a constant. Suppose  $H\geq1$ be a  constant, and $\rho_0=\rho_0(n,N,p,\nu,L,H,\alpha,\omega(\cdot),\mu(\cdot))\in (0,1]$. Let $u\in C^{0}(-T,0;L^2(\Omega;\mathbb{R}^N))\cap L^p(-T,0;W^{1,p}(\Omega;\mathbb{R}^N))$ is a weak solution to (\ref{1.1}) in $Q_T$ under the assumption (\ref{2.01})-(\ref{2.5})  and $Q_{\rho}^{(\lambda)}(z_0)\subset Q_T$ is a parabolic cylinder with reference point $z_0\in Q_T$, and $\rho\in(0,\rho_0]$. For the scaling factor $\lambda\geq1$, if there exist constants $\vartheta\in (0,1)$,  $\varepsilon_0=\varepsilon_0(n,N,p,\nu,L,H,\alpha,\mu(\cdot))\in (0,1)$ and $c_1=c_1(n,N,p,\nu,L,H)$, such that
\begin{equation}\label{5.7}
\lambda\leq1+|Dl_{z_0,\rho}^{(\lambda)}|\leq H\lambda
\end{equation}
and the smallness condition
\begin{equation}\label{5.8}
E_{\lambda}(z_0,\rho,l_{z_0,\rho}^{(\lambda)})\leq \varepsilon_0
\end{equation}
holds,  and for $\lambda=1$ on $Q_\rho(z_0)\equiv Q_{\rho}^{(1)}(z_0)$, there holds
\begin{align}\label{5.9}
\begin{cases}
 1+|Dl_{z_0,\rho}^{(1)}|\leq H,\\
   \Phi_1\left(l_{z_0},\rho,l_{z_0,\rho}^{(1)}\right)\leq \varepsilon_1, \\
    \end{cases}
\end{align}
with $\varepsilon_1=c_1\vartheta^2\varepsilon_0\leq\frac{\varepsilon_0}{3}$.
Then, there exist numbers $\{\lambda_j\}_{j=0}^{\infty}$ such that
\addtocounter{equation}{1}
\begin{equation}
\tag{A$_j$}\ \ \ \ \ \ \ \ \ \
\begin{cases}
1\leq \lambda_j\leq(2H)^j,  \\
\lambda_j\leq1+|Dl_{z_0,\vartheta^j\rho}^{(\lambda_j)}|\leq H\lambda_j,  \\
\Phi_{\lambda_j}\left(z_0,\vartheta^j\rho,l_{z_0,\vartheta^j\rho}^{(\lambda_j)}\right)\leq\varepsilon_1,\\
  \end{cases}
\end{equation}
and for any $r\in(0,\rho]$, there holds
\begin{equation}\label{5.10}
\int_{Q_r(z_0)}|u-(u)_{z_0,r}|^2 dz\leq cr^{n+2+2\alpha},
\end{equation}
where $c=c(n,N,p,\nu,L,H,\alpha)$.
\end{Lemma}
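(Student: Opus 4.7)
The plan is a double induction: first I build the sequence $\{\lambda_j\}_{j\ge 0}$ so that the triple (A$_j$) holds for every $j$, and then I extract the Campanato-type estimate (\ref{5.10}) from the resulting uniform decay. For the base case I set $\lambda_0:=1$, so that (A$_0$) coincides with hypothesis (\ref{5.9}), and (\ref{5.8}) delivers $\Phi_{\lambda_0}\le\varepsilon_0$, hence $\Phi_{\lambda_0}\le\varepsilon_1$ provided $c_1\vartheta^2\ge 1$ or, alternatively, $\varepsilon_1\le\varepsilon_0/3$ is used directly; the reading of (\ref{5.9}) is that at scale $\rho$ one already sits under the smaller threshold $\varepsilon_1$.

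For the induction step, assume (A$_j$) and abbreviate $l_j:=l_{z_0,\vartheta^j\rho}^{(\lambda_j)}$, $w:=u-l_j$. The linearisation Lemma \ref{le5.1}, applied on $Q_{\vartheta^j\rho/2}^{(\lambda_j)}(z_0)$, tells me that $w$ is approximately $A$-caloric with bilinear form $A:=\partial_F a(z_0,l_j(z_0),Dl_j)$, the error being controlled by $(1+|Dl_j|^{p-1})\bigl[E_{\lambda_j}(\vartheta^j\rho)+\mu(\sqrt{E_{\lambda_j}(\vartheta^j\rho)})\bigr]^{1/2}\sqrt{E_{\lambda_j}(\vartheta^j\rho)}$. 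Rescaling $w$ by $(1+|Dl_j|)\vartheta^j\rho$ in space and by $\lambda_j^{2-p}$ in time normalises the $L^2+L^p$ left-hand side of Lemma \ref{le2.3} thanks to the Caccioppoli inequality (\ref{3.1}) and (A$_j$); choosing $\varepsilon_0$ small enough (in dependence on $\mu(\cdot)$, $\omega(\cdot)$ and the threshold $\delta_0$ of Lemma \ref{le2.3}) drives the error below the required smallness. Lemma \ref{le2.3} then produces an $A$-caloric map $h$ that is $L^2+L^p$-close to the rescaled $w$ on $Q_{\vartheta^j\rho/2}^{(\lambda_j)}(z_0)$, and Lemma \ref{le5.2} furnishes the $\vartheta^2$-contraction of the linear-Taylor excess of $h$ on $Q_{\vartheta^{j+1}\rho}^{(\lambda_j)}(z_0)$. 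Combining the two and invoking Lemma \ref{le2.2} to pass from the Taylor polynomial of $h$ to the genuine minimiser $l_{z_0,\vartheta^{j+1}\rho}^{(\lambda_j)}$ yields the decay
\begin{equation*}
\Phi_{\lambda_j}\!\left(z_0,\vartheta^{j+1}\rho,l_{z_0,\vartheta^{j+1}\rho}^{(\lambda_j)}\right)\le c_{pa}\vartheta^2\,\Phi_{\lambda_j}(z_0,\vartheta^j\rho,l_j)+c\,\vartheta^{-(n+2+p)}\delta.
\end{equation*}

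Fixing $\vartheta$ small enough that $c_{pa}\vartheta^2\le\tfrac12\vartheta^{2\alpha}$ (which is possible because $\alpha<1$ and encodes the $\alpha$-dependence of $\rho_0$) and then $\delta,\varepsilon_0$ small, the right-hand side stays below $\varepsilon_1=c_1\vartheta^2\varepsilon_0$. I next define $\lambda_{j+1}$ as the unique number in $[1,2H\lambda_j]$ for which $\lambda_{j+1}\le 1+|Dl_{z_0,\vartheta^{j+1}\rho}^{(\lambda_{j+1})}|\le H\lambda_{j+1}$: estimate (\ref{2.4}) gives $|Dl_{z_0,\vartheta^{j+1}\rho}^{(\lambda_j)}-Dl_j|\le c\sqrt{\Phi_{\lambda_j}(\vartheta^j\rho)}\lambda_j$, so $\sqrt{\varepsilon_1}\ll 1$ keeps $\lambda_{j+1}\in[1,2H\lambda_j]\subseteq[1,(2H)^{j+1}]$, and a standard cylinder-change (comparing $Q_{\vartheta^{j+1}\rho}^{(\lambda_j)}$ with $Q_{\vartheta^{j+1}\rho}^{(\lambda_{j+1})}$ via Lemma \ref{le2.2}) transports the excess to the new intrinsic cylinder, closing (A$_{j+1}$). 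Once (A$_j$) holds for every $j$, the Poincaré inequality (\ref{4.1}) combined with $|Dl_j|\le H\lambda_j$ and $\Phi_{\lambda_j}\le\varepsilon_1$ yields, after a geometric summation over $j$, $\int_{Q_{\vartheta^j\rho}(z_0)}|u-(u)_{z_0,\vartheta^j\rho}|^2\,dz\le c(\vartheta^j\rho)^{n+2+2\alpha}$, and standard interpolation between consecutive dyadic radii $\vartheta^{j+1}\rho\le r<\vartheta^j\rho$ extends this to every $r\in(0,\rho]$, giving (\ref{5.10}). The main obstacle is the simultaneous upkeep of the three clauses of (A$_{j+1}$) under a shifting intrinsic scaling: $\lambda_{j+1}$ must match the updated gradient $|Dl_{z_0,\vartheta^{j+1}\rho}^{(\lambda_{j+1})}|$ and stay inside the geometric cage $[1,(2H)^{j+1}]$, while the excess stays under $\varepsilon_1$; what makes this delicate in the inhomogeneous setting (\ref{1.1}) is the $\rho$-term and the $|u|^{q_1}$ contribution coming from $b(z,u,Du)$, and it is precisely to prevent the relevant constants from depending on $\lambda=H\lambda_j\to\infty$ that the weighted Sobolev interpolation (\ref{1.3}) was introduced in the Caccioppoli proof.
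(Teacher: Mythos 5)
Your skeleton for the excess improvement is the paper's: linearize via Lemma \ref{le5.1}, rescale with the Caccioppoli inequality so that Lemma \ref{le2.3} applies, use Lemma \ref{le5.2} for the caloric decay and Lemma \ref{le2.2} plus (\ref{2.4}) to pass to the new minimizing affine map and to a new scaling factor $\lambda_{j+1}\in[\lambda_j/2,2H\lambda_j]$ (the paper gets existence by an intermediate value argument on $\bar\mu\mapsto\bar\mu-(1+|Dl^{(\bar\mu)}_{\vartheta^{j+1}\rho}|)$; your claim of uniqueness is unnecessary, but existence is the point and your sketch is essentially the same). There are, however, two genuine gaps. First, to run the induction you must feed the \emph{hybrid} excess $E_{\lambda_j}(\vartheta^j\rho)=\Phi_{\lambda_j}+\omega(\Psi_{\lambda_j})+\omega((\vartheta^j\rho)^2)+\vartheta^j\rho$ into the linearization and Caccioppoli estimates, while the induction hypothesis only gives $\Phi_{\lambda_j}\le\varepsilon_1$. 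Shrinking $\varepsilon_0$ cannot control $\omega(\Psi_{\lambda_j}(\vartheta^j\rho))$: the paper first derives from (A$_j$) the zero-order bound (B$_j$), $\Psi_{\lambda_j}(\vartheta^j\rho)\le(2H)^{2(j+1)}(\vartheta^j\rho)^2$, and then chooses $\rho_0$ so small that $\omega((2H\rho_0)^2)+2H\rho_0\le\varepsilon_1$, which is exactly where the factor $3$ in $\varepsilon_1\le\varepsilon_0/3$ and the $\omega$-dependence of $\rho_0$ come from; your proposal never closes this loop, so the smallness needed for the $A$-caloric lemma at step $j$ is not justified.

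Second, your derivation of (\ref{5.10}) is not the right mechanism. The Campanato bound is not obtained from the Poincar\'e inequality (\ref{4.1}) plus a geometric summation, nor does the exponent $\alpha$ come from forcing a decay $c_{pa}\vartheta^2\le\tfrac12\vartheta^{2\alpha}$ of $\Phi$: in this scheme $\Phi_{\lambda_j}$ is only kept bounded by $\varepsilon_1$, and the mean oscillation of $u$ at scale $\vartheta^j\rho$ is dominated by the affine part $|Dl_j|^2(\vartheta^j\rho)^2\sim\lambda_j^2(\vartheta^j\rho)^2$, which may grow like $(2H)^{2j}$; moreover the estimates live on intrinsic cylinders $Q^{(\lambda_j)}_{\vartheta^j\rho}$, so one must pass to standard cylinders via $Q_{\theta^j\rho}\subset Q^{(\lambda_j)}_{\vartheta^j\rho}$ with $\theta=(2H)^{(2-p)/2}\vartheta$. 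The paper compensates both the $(2H)^{2j}$ growth and this cylinder mismatch by the third constraint in (\ref{5.28}), $\vartheta\le(2H)^{-\frac{p(n+4)}{4(1-\alpha)}}$, and this is precisely where $\alpha<1$ and the $H,\alpha$-dependence of the constants enter (see (\ref{5.36})--(\ref{5.38})). Without an argument of this type your final step does not yield $\int_{Q_r}|u-(u)_{z_0,r}|^2\,dz\le c\,r^{n+2+2\alpha}$ with $c$ independent of $j$.
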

\begin{proof}
For the  convenience of notation, we shall once again omit the reference point $z_0=(x_0,t_0)$ in the notation, and we denote $Q_{\rho}^{(\lambda)}$, $l_j,l_{\rho}^{(\lambda)}$, $E_{\lambda}(\rho)$, $E_{\lambda_j}$, $ \Phi_{\lambda_j}(\vartheta^j\rho)$, $\Psi_{\lambda_j}(\vartheta^j\rho)$ instead of
 $Q_{\rho}^{(\lambda)}(z_0)$, $l_{z_0,\vartheta^{j}\rho}  ^{(\lambda_j)} (z_0)$, $l_{z_0,\rho}^{(\lambda)}(z_0)$, $E_{\lambda}(z_0,\rho,l_{\rho}^{(\lambda)})$,
 $E_{\lambda_j}(z_0,\vartheta^j\rho,l_j)$, $\Phi_{\lambda_j}(z_0,\vartheta^j\rho,l_j)$, $\Psi_{\lambda_j}(z_0,\vartheta^j\rho,l_j(z_0))$, respectively.

Suppose (A$_j$) holds, then a direct consequence of (A$_j$) is
\addtocounter{equation}{1}
\begin{equation}\tag{B$_j$}
 \Psi_{\lambda_j}(\vartheta^j\rho)\leq (2H)^{2(j+1)}(\vartheta^j\rho)^2.
\end{equation}
In fact, by (A$_j$), we have
\begin{align*}
\Psi_{\lambda_j}(\vartheta^j\rho)&=\Xint-_{Q_{\vartheta^j\rho}^{(\lambda_j)}}|u-l_j(z_0)|^2 dz\\
&\leq 2(\vartheta^j\rho)^2(1+|Dl_j|)^2\Phi_{\lambda_j}(\vartheta^j\rho)+2|l_j-l_j(z_0)|^2\\
&\leq 2(\vartheta^j\rho)^2(1+|Dl_j|)^2\Phi_{\lambda_j}(\vartheta^j\rho)+2(\vartheta^j\rho)^2|Dl_j|^2\\
&\leq 2^{2(j+1)}(\vartheta^j\rho)^2 H^{2(j+1)}.
\end{align*}
Before  proving (A$_j$), first, we propose to prove that from (\ref{5.7})--(\ref{5.8}) there exist $\lambda'\in [\frac{\lambda}{2},2H\lambda]$ such that
\begin{equation}\label{5.11}
1+|Dl_{\vartheta\rho}^{(\lambda')}|=\lambda'
\end{equation}
and there exists a constant  $c_1$  such that the decay estimate
\begin{equation}\label{5.12}
\Phi_{\lambda'}(\vartheta\rho)\leq c_1\vartheta^2 E_\lambda(\rho)
\end{equation}
holds. To prove (\ref{5.11}) and (\ref{5.12}), we first define
\begin{equation}\label{5.012}
v(x,t):=\frac{u(x,\lambda^{2-p}t)-l_{\rho}^{(\lambda)}(z)}{(1+|Dl_{\rho}^{(\lambda)}|) c_2\sqrt{E_\lambda(\rho)}},
\end{equation}
for all $(x,t)\in Q_\rho\equiv Q_{\rho}^{(1)}$ with $c_2\geq 1$ will be specified in later.
In virtue of  (\ref{3.1}) we can see that
\begin{equation}\label{5.13}
\Xint-_{Q_{\rho/2}^{(\lambda)}}\left[\frac{|Du-Dl_{\rho}^{(\lambda)}|^2}{(1+|Dl_{\rho}^{(\lambda)}|)^2}+\frac{|Du-Dl_{\rho}^{(\lambda)}|^p}{(1+|Dl_{\rho}^{(\lambda)}|)^p}\right]dz\leq cE_\lambda(\rho),
\end{equation}
where $c=c(n,p,\nu,L)$. From (\ref{5.8}), we define
\begin{equation*}
\gamma:=\sqrt{E_\lambda(\rho)}\leq 1.
\end{equation*}
Thus, by the aid of  (\ref{5.13}) and the definition of $E_\lambda(\rho)$, we deduce that
\begin{align}\label{5.14}
\Xint-_{Q_{\rho/2}^{(\lambda)}}&\left[\left|\frac{v}{\rho}\right|^2+|Dv|^2\right]dz+\gamma^{p-2}\Xint-_{Q_{\rho/2}^{(\lambda)}}
\left[\left|\frac{v}{\rho}\right|^p+|Dv|^p\right]dz\nonumber\\
 =&\frac{1}{E_\lambda(\rho)}\Xint-_{Q_{\rho/2}^{(\lambda)}}\frac{|u-l_{\rho}^{(\lambda)}|^2}
{c_{2}^{2}\rho^2(1+|Dl_{\rho}^{(\lambda)}|)^2}+\frac{|u-l_{\rho}^{(\lambda)}|^p}{c_{2}^{p}\rho^p(1+|Dl_{\rho}^{(\lambda)}|)^p}dz\nonumber\\
& +\frac{1}{E_\lambda(\rho)}\Xint-_{Q_{\rho/2}^{(\lambda)}}\frac{|Du-Dl_{\rho}^{(\lambda)}|^2}
{c_{2}^{2}(1+|Dl_{\rho}^{(\lambda)}|)^2}+\frac{|Du-Dl_{\rho}^{(\lambda)}|^p}
{c_{2}^{p}(1+|Dl_{\rho}^{(\lambda)}|)^p}dz \nonumber\\
\leq&\frac{2^{n+2}}{c_{2}^{2}}+\frac{c}{c_{2}^{2}}\leq 1
\end{align}
Indeed, we only need to choose $c_2=c_2(n,p,\nu,L)\geq1$ large enough, then the previous inequality is automatically satisfied.
Next, we define the bilinear form
\begin{equation*}
A(F,\widetilde{F}):=\frac{\partial_F a(z_0,l_{\rho}^{(\lambda)}(z_0),Dl_{\rho}^{(\lambda)})(F,\widetilde{F})}{\lambda^{p-2}}
\end{equation*}
for all $F,\widetilde{F}\in \mathbb{R}^{Nn}$. Taking into account (\ref{2.02}), (\ref{2.03}), and (\ref{5.7}), there holds
\begin{equation*}
A(F,F)\geq \nu|F|^2,\ \ \ \ \ \ \ \ \ \ \ A(F,\widetilde{F})\leq LH^{p-2}|F||\widetilde{F}|,
\end{equation*}
for all $F,\widetilde{F}\in \mathbb{R}^{Nn}$. Appealing to  (\ref{5.1}) and (\ref{5.7}), then we  have
\begin{align}\label{5.15}
\Xint-_{Q_{\rho/2}}(v\cdot\varphi_t-A(Dv,D\varphi))&
\leq \frac{c(1+|Dl^{(\lambda)}_{\rho}|)^{p-2}}{c_2\lambda^{p-2}}[E_\lambda(\rho)+\mu(\sqrt{E_\lambda(\rho)})]^{\frac{1}{2}}
 \sup_{Q_{\rho/2}^{(\lambda)}}|D\varphi|\nonumber\\
& \leq\frac{cH^{p-2}}{c_2}\left[E_\lambda(\rho)+\mu(\sqrt{E_\lambda(\rho)})\right]^{\frac{1}{2}} \sup_{Q_{\rho/2}^{(\lambda)}}|D\varphi|\nonumber\\
& \leq\left[E_\lambda(\rho)+\mu(\sqrt{E_\lambda(\rho)})\right]^{\frac{1}{2}} \sup_{Q_{\rho/2}^{(\lambda)}}|D\varphi|,
\end{align}
where in the last inequality, we have taken into account the fact $c_2\geq 1$ large enough.

Let $\varepsilon>0$ from Lemma \ref{le2.3}, which will be specified in later and $\delta\equiv\delta(n,p,\nu,LH^{p-2},\varepsilon)$ are constants from the $A-$caloric approximation Lemma \ref{le2.3}, here, we replace $L$ in Lemma \ref{le2.3} with $LH^{p-2}$.  From (\ref{5.14}), (\ref{5.15}) and the definition of $v$, $A$, we can see that  the all assumptions of Lemma \ref{le2.3} are satisfied, if we proved the smallness condition
\begin{align}\label{5.16}
\left[E_\lambda(\rho)+\mu(\sqrt{E_\lambda(\rho)})\right]^\frac{1}{2}\leq \delta
\end{align}
holds. Thus, applying Lemma \ref{le2.3}, there exists a $A-$caloric function $h\in L^2(\Lambda_{\rho/4};W^{1,2}(B_{\rho/4};\mathbb{R}^N))$ on $Q_{\rho/4}$, such that
\begin{equation}\label{5.17}
\Xint-_{Q_{\rho/4}}\left[\left|\frac{h}{\rho/4}\right|^2+|Dh|^2\right]dz+\gamma^{p-2}\Xint-_{Q_{\rho/4}}\left[\left|\frac{h}{\rho/4}\right|^p+|Dh|^p\right]dz\leq 2^{n+3+2p},
\end{equation}
and
\begin{equation}\label{5.18}
\Xint-_{Q_{\rho/4}}\left[\left|\frac{v-h}{\rho/4}\right|^2+\gamma^{p-2}\left|\frac{v-h}{\rho/4}\right|^p\right]dz\leq \varepsilon.
\end{equation}
Taking into account Lemma \ref{le5.2} and (\ref{5.17}), for $\theta\in (0,\frac{1}{4}]$, $s=2$ and $s=p$, there exists a constant
$c_{pa}\equiv c_{pa}(n,N,\nu,LH^{p-2})=c_{pa}(n,N,\nu,L,p,H)$,
such that for the $A-$caloric function $h$ satisfies
\begin{align}\label{5.19}
(\theta\rho)^{-s}&\Xint-_{Q_{\theta\rho}}|h-(h)_{\theta\rho}-(Dh)_{\theta\rho}x|^s dz\nonumber\\
& \leq c_{pa}\theta^{s}(\rho/4)^{-s}\Xint-_{Q_{\rho/4}}|h-(h)_{\rho/4}-(Dh)_{\rho/4}x|^sdz\nonumber\\
&\leq 3^{s-1}c_{pa}\theta^s\left[(\rho/4)^{-s}\Xint-_{Q_{\rho/4}}(|h|^s+|(h)_{\rho/4}|^s)dz+|(Dh)_{\rho/4}|^s\right]\nonumber\\
&\leq 2\cdot3^{s-1}c_{pa}\theta^s\left[(\rho/4)^{-s}\Xint-_{Q_{\rho/4}}|h|^s dz+\Xint-_{Q_{\rho/4}}|Dh|^s dz\right]\nonumber\\
&\leq 2^{n+2p+4}\cdot3^{p-1}c_{pa}\gamma^{2-s}\theta^s.
\end{align}
Employing (\ref{5.18}) and (\ref{5.19}), for all $\theta\in (0,\frac{1}{4}]$, we further obtain
\begin{align}\label{5.20}
(\theta\rho)^{-s}&\Xint-_{Q_{\theta\rho}}|v-(h)_{\theta\rho}-(Dh)_{\theta\rho}x|^s dz\nonumber\\
& \leq 2^{s-1}\left[(\theta\rho)^{-s}\Xint-_{Q_{\theta\rho}}|v-h|^s dz+(\theta\rho)^{-s}\Xint-_{Q_{\theta\rho}}|h-(h)_{\theta\rho}-(Dh)_{\theta\rho} x|^sdz \right]\nonumber\\
&\leq 2^{s-1}\left[(4\theta)^{-n-2-s}(\rho/4)^{-s}\Xint-_{Q_{\rho/4}}|v-h|^s dz+2^{n+2p+4}3^{p-1}c_{pa}\gamma^{2-s}\theta^s\right]\nonumber\\
&\leq 2^{n+4p+2}c_{pa}\gamma^{2-s}[\theta^{-n-2-s}\varepsilon+\theta^s].
\end{align}
Now, we choose $\varepsilon:=\theta^{n+2+2s}$ with $\theta\in (0,\frac{1}{4}]$ is a fixed parameter will be specified in later, at this stage, we have also determined the constant $\delta\equiv\delta(n,p,\nu,L,H,\varepsilon)=\delta(n,p,\nu,L,H,\theta)$ in (\ref{5.16}).

From the definition of $v$ and (\ref{5.7}), (\ref{5.20}), by scaling back, for $s=2$ or $s=p$, there holds
\begin{align}\label{5.21}
(\theta\rho)^{-s}&\Xint-_{Q_{\theta\rho}^{(\lambda)}}|u-l_{\rho}^{(\lambda)}-(1+|Dl^{\lambda}_{\rho}|) c_2\gamma((h)_{\theta\rho}^{(\lambda)}+(Dh)_{\theta\rho}^{(\lambda)}x)|^sdz\nonumber\\
&\leq 2^{n+4p+3}c_{pa}H^s\lambda^s\theta^s\gamma^2c_{2}^{s}=c\theta^s\lambda^s E_\lambda(\rho),
\end{align}
with $c=c(n,N,p,\nu,L,H)$.

As a consequence,  from  the minimizing property of $l_{\theta\rho}^{(\lambda)}$ in Lemma \ref{le2.2} and (\ref{5.21}), it follows that for $s=2$ or $s=p$
\begin{equation}\label{5.22}
\Xint-_{Q_{\theta\rho}^{(\lambda)}}\left|\frac{u-l_{\theta\rho}^{(\lambda)}}{\theta\rho}\right|^s dz\leq c_3\theta^s\lambda^s E_\lambda(\rho),
\end{equation}
with $c_3=c_3(n,N,p,\nu,L,H)$.

Now, we concentrate our attention on the proof of (\ref{5.11}) and (\ref{5.12}). Define
\begin{equation*}
\vartheta:=2^{-\frac{p-2}{2}}\theta,
\end{equation*}
which implies $\vartheta\in (0,2^{-\frac{p+2}{2}}]$ due to $\theta\in(0,\frac{1}{4}]$. For some $\bar{\mu}\in(\frac{\lambda}{2},2H\lambda]$, then we have
$Q_{\vartheta\rho}^{(\bar{\mu})}\subset Q_{\theta\rho}^{(\lambda)}$. Hence, by $(\ref{5.22})$, there holds
\begin{align}\label{5.23}
|Dl_{\vartheta\rho}^{(\bar{\mu})}-Dl_{\rho}^{(\lambda)}|^2&
\leq\frac{n(n+2)}{(\vartheta\rho)^2}\Xint-_{Q_{\vartheta\rho}^{(\bar{\mu})}}|u-l_{\rho}^{(\lambda)}(x_0)-Dl_{\rho}^{(\lambda)}(x-x_0)|^2dz\nonumber\\
&=n(n+2)\Xint-_{Q_{\vartheta\rho}^{(\bar{\mu})}}\left|\frac{u-l_{\rho}^{(\lambda)}}{\vartheta\rho}\right|^2 dz
\leq n(n+2)\vartheta^{-(n+4)}\left(\frac{\lambda}{\bar{\mu}}\right)^{2-p}\Xint-_{Q_{\rho}^{(\lambda)}}\left|\frac{u-l_{\rho}^{(\lambda)}}{\rho}\right|^2 dz\nonumber\\
&\leq n(n+2)\vartheta^{-(n+4)}(2H)^{p-2}\lambda^2E_\lambda(\rho)\nonumber\\
&\leq c_4\lambda^2E_\lambda(\rho)\leq\frac{\lambda^2}{4},
\end{align}
where in the first inequality we have used (\ref{2.4}) with $\xi\equiv l_{\rho}^{(\lambda)}(z_0)$, $w=Dl_{\rho}^{(\lambda)}$ on $Q_{\vartheta\rho}^{(\bar{\mu})}$  and in the last inequality, we have taken into account smallness assumption of $E_\lambda(\rho)$, that is
\begin{equation}\label{5.24}
c_4E_\lambda(\rho)\leq\frac{1}{4},
\end{equation}
with $c_4=c_4(n,\vartheta,p,H)$.

Applying (\ref{5.7}), (\ref{5.23}) we can see that
\begin{equation}\label{5.25}
1+\left|Dl_{\vartheta\rho}^{(\bar{\mu})}\right|
\leq 1+\left|Dl_{\rho}^{(\lambda)}\right|+\left|Dl_{\vartheta\rho}^{(\bar{\mu})}-Dl_{\rho}^{(\lambda)}\right|\leq H\lambda+\frac{\lambda}{2}
\leq 2H\lambda,\\
\end{equation}
and
\begin{equation}\label{5.26}
1+\left|Dl_{\vartheta\rho}^{(\bar{\mu})}\right|\geq 1+\left|Dl_{\rho}^{(\lambda)}\right|-\left|Dl_{\vartheta\rho}^{(\bar{\mu})}-Dl_{\rho}^{(\lambda)}\right|\geq\lambda-\frac{\lambda}{2}\geq\frac{\lambda}{2}.
\end{equation}
Define $f:[\frac{\lambda}{2},2H\lambda]\rightarrow \mathbb{R} $ by
\begin{equation*}
f(\bar{\mu}):=\bar{\mu}-(1+|Dl_{\vartheta\rho}^{(\bar{\mu})}|),
\end{equation*}
then $f$ is a continuous function. Appealing to  (\ref{5.25})-(\ref{5.26}), we have $f(\frac{\lambda}{2})\leq 0$ and $f(2H\lambda)\geq 0$.
Thus, there exists $\lambda'\in[\frac{\lambda}{2},2H\lambda]$ such that $f(\lambda')=0$, that is $\lambda'=1+Dl_{\vartheta\rho}^{(\lambda')}$,  whence (\ref{5.11}).

Next, for $s=2$ or $s=p$,  we once again using the minimizing property of $l$,  (\ref{5.11}), (\ref{5.22}) and the definition of $\vartheta$, we deduce that
\begin{align*}
\Xint-_{Q_{\vartheta\rho}^{(\lambda')}}\left|\frac{u-l_{\vartheta\rho}^{(\lambda')}}{\vartheta\rho}\right|^s dz&\leq c(n,p)\Xint-_{Q_{\vartheta\rho}^{(\lambda')}}\left|\frac{u-l_{\theta\rho}^{(\lambda)}}{\vartheta\rho}\right|^s dz\nonumber\\
&\leq c\left(\frac{\theta}{\vartheta}\right)^{n+2+s}\left(\frac{\lambda}{\lambda'}\right)^{2-p}\Xint-_{Q_{\theta\rho}^{(\lambda)}}
\left|\frac{u-l_{\theta\rho}^{(\lambda)}}{\theta\rho}\right|^s dz\nonumber\\
&\leq c c_3\left(\frac{\theta}{\vartheta}\right)^{n+2+s}(2H)^{p-2}\theta^s\lambda^s E_\lambda(\rho)\nonumber\\
&\leq c c_3\left(\frac{\theta}{\vartheta}\right)^{n+2+s}(2H)^{p+s-2}\theta^s(\lambda')^s E_\lambda(\rho)\nonumber\\
&\leq c_1\vartheta^s\left(1+\left|Dl_{\vartheta\rho}^{(\lambda')}\right|\right)^s E_\lambda(\rho),
\end{align*}
with $c_1=c_1(n,N,p,\nu,L,H)$. Whence (\ref{5.12}).

From now on, we have determined $\delta\equiv\delta(n,p,\nu,L,H,\theta=2^{\frac{p-2}{2}}\vartheta$)=$\delta(n,p,\vartheta,L,H,\nu)$ in (\ref{5.16}),
and hence from (\ref{5.16}) and (\ref{5.24}) we have also determined $\varepsilon_0=\varepsilon_0(n,p,\nu,L,$ $H,\vartheta,\mu(\cdot))$,  $c_1=c_1(n,N,p,\nu,L,$ $H)$, which is close to fulfilled the conditions of the Lemma \ref{le5.3}, it  remains to determine the constant $\vartheta\in (0,1)$. For $\alpha\in (0,1)$, let
\begin{equation}\label{5.28}
\vartheta:=min \left\{\left(\frac{1}{2}\right)^\frac{p+2}{2},\left(\frac{1}{3c_1}\right)^\frac{1}{2},\left(\frac{1}{2H}\right)^{\frac{p(n+4)}{4(1-\alpha)}}\right\}\\
\end{equation}
and
\begin{equation}\label{5.29}
\varepsilon_1:=c_1(\vartheta)^s\varepsilon_0.
\end{equation}
Joining (\ref{5.28}) with (\ref{5.29}), we can see that once $\vartheta$ is chosen, which is dependent on $n,N,p,\nu,L,\alpha,H$, then $\varepsilon_0$ from (\ref{5.8}) is determined, that is $\varepsilon_0=\varepsilon_0(n,N,p,\nu,L,\alpha,H)$. Moreover, there holds
 $\varepsilon_1\leq \frac{\varepsilon_0}{3}$.

According to the conclusion above, now, we focus our attention on  proving (A$_j$). We shall use the induction argument, first, consider the case (A$_0$). Taking into account  the assumption (\ref{5.9}), let $\lambda_0=1$, we have  (A$_0$) holds. We now choose $\rho_0=\rho_0(n,N,p,\nu,L,H,\alpha,\omega(\cdot),\mu(\cdot))\in (0,1]$ suitable small such that
\begin{equation}\label{5.30}
\omega((2H\rho_0)^2)+2H\rho_0\leq\varepsilon_1,
\end{equation}
and suppose that (A$_j$) holds for some $j\in\{0,1,2\cdots\}$, we proceed to prove (A$_{j+1}$) holds. By claimed as before, from (A$_j$) we have (B$_j$) holds, then using  the assumptions  (A$_j$) and (\ref{5.28}) we deduce that
\begin{equation*}
\Psi_{\lambda_j}(\vartheta^j\rho)\leq (2H)^{2(j+1)}(\vartheta^j\rho)^2\leq (2H\rho)^2\leq (2H\rho_0)^2.
\end{equation*}
Thus, in virtue of (\ref{5.30}), we infer that
\begin{equation}\label{5.31}
\omega(\Psi_{\lambda_j}(\vartheta^j\rho))\leq\omega((2H\rho_0)^2)\leq\varepsilon_1.
\end{equation}
Similarly, by $(\ref{5.30})$, we also have
\begin{equation}\label{5.32}
\omega((\vartheta^j\rho)^2)+(\vartheta^j\rho)\leq \omega(\rho^2)+\rho\leq \omega((2H\rho_0)^2)+2H\rho_0\leq\varepsilon_1.
\end{equation}
Taking into account (\ref{5.31}), (\ref{5.32}) and the induction assumption (A$_j$)$_3$, we infer that
\begin{equation}\label{5.33}
E_{\lambda_ j}(\vartheta^j\rho)=\Phi_{\lambda _j}(\vartheta^j\rho)+\omega(\Psi_{\lambda_ j}(\vartheta^j\rho))+\omega((\vartheta^j\rho)^2)+\vartheta^j\rho\leq 3\varepsilon_1\leq \varepsilon_0.
\end{equation}
Finally, by (\ref{5.33}) and the induction  assumption (A$_j$)$_2$, we can replace $(\rho,\lambda)$ in (\ref{5.7}), (\ref{5.8}) by $(\vartheta^j\rho,\lambda_j)$, then, from (\ref{5.11}), (\ref{5.12}) and (\ref{5.25}), there exists a number $\lambda_{j+1}\in\left[\frac{\lambda_j}{2},2H\lambda_j\right]$ such that
  \begin{equation}\label{5.34}
1+|Dl_{j+1}|=\lambda_{j+1}\ \  \  \ \
\end{equation}
and
\begin{equation}\label{5.35}
\Phi_{\lambda_{j+1}}(\vartheta\vartheta^j\rho)\leq c_1\vartheta^2E_{\lambda_j}(\vartheta^j\rho)\leq\varepsilon_1.
\end{equation}
In view of (\ref{5.34}) and (\ref{5.35}) we have (A$_{j+1}$)$_2$, (A$_{j+1}$)$_3$ hold. Furthermore, applying  (A$_j$)$_1$, we refer that (A$_{j+1}$)$_1$ holds since $\lambda_{j+1}\leq 2H\lambda_j$. Thus, we have proved (A$_{j+1}$). For simplicity, here, we represent the recursive relationship as follows
\begin{equation}\label{5.035}
E_{\lambda}\longrightarrow \Phi_{\lambda_1}\longrightarrow A_1\longrightarrow B_1\longrightarrow E_{\lambda_1}\longrightarrow\cdots\longrightarrow
A_j\longrightarrow B_j\longrightarrow E_{\lambda_j}\longrightarrow\cdots
\end{equation}
Based on (\ref{5.035}), now, it   remains to prove (\ref{5.10}). First, from {(B$_j$)}, we can see that for some $j\in \{1,2\cdots\}$
\begin{align}\label{5.36}
\int_{Q_{\vartheta^j\rho}^{(\lambda_j)}}\left|u-(u)_{z_0,\vartheta^j\rho}^{(\lambda_j)}\right|^2 dz&\leq \alpha_n(\lambda_{j}^{2-p})(2H)^{2(j+1)}(\vartheta^j\rho)^{n+4}\nonumber\\
& \leq \alpha_n (2H)^{2(j+1)}(\vartheta^j\rho)^{n+4},
\end{align}
where $\alpha_n$ denotes the volume of unit ball in $\mathbb{R}^n$ and in the previous inequality we have used the fact $\xi_{z_0,\rho}^{(\lambda)}=(u)_{z_0,\rho}^{(\lambda)}$ in (\ref{2.3}).

Now, we  define
\begin{equation*}
\theta:=(2H)^{\frac{2-p}{2}}\vartheta,
\end{equation*}
which implies the inclusion $Q_{\theta^{j}\rho}(z_0)\subset Q_{\vartheta^j \rho}^{(\lambda_{j})}(z_0)$, using  (\ref{5.36}), (\ref{5.28}), and the minimizing property of $(u)_{z_0, \vartheta^{j}\rho}$ we infer that
\begin{align}\label{5.38}
\int_{Q_{\theta^{j}\rho}}\left|u-(u)_{z_0,\theta^{j}\rho}\right|^2 dz&\leq \int_{Q_{\theta^{j}\rho}(z_0)}\left|u-(u)_{z_0,\vartheta^j\rho}^{(\lambda_j)}\right|^2 dz
\leq\int_{Q_{\vartheta^{j}\rho}^{(\lambda_j)}(z_0)}|u-(u)_{z_0,\vartheta^j\rho}|^2dz\nonumber\\
&\leq \alpha_n(2H)^{2(j+1)}\theta^{j(n+2+2\alpha)}\left[(2H)^\frac{(p-2)(n+2+2\alpha)}{2}\vartheta^{2-2\alpha}\right]^j\rho^{n+4}\nonumber\\
&\leq \alpha_n(2H)^{2}\theta^{j(n+2+2\alpha)}\left[(2H)^\frac{p(n+4)}{2}\vartheta^{2-2\alpha}\right]^j\rho^{n+4}\nonumber\\
&\leq\alpha_n(2H)^{2}\theta^{j(n+2+2\alpha)}\rho^{n+4}.
\end{align}
For any $r\in(0,\rho]$, there exists $j\in\{0,1,2,\cdots\}$ such that $\theta^{j+1}\rho<r\leq \theta^{j}\rho$, making use of (\ref{5.38}), we obtain
\begin{align*}
\int_{Q_{r}(z_0)}|u-(u)_{z_0,r}|^2 dz&\leq \int_{Q_{r}(z_0)}|u-(u)_{z_0,\vartheta^{j}\rho}|^2 dz\nonumber\\
&\leq \int_{Q_{\theta^j\rho}(z_0)}|u-(u)_{z_0,\vartheta^{j}\rho}|^2 dz\nonumber\\
&\leq \alpha_n(2H)^{2}\theta^{j(n+2+2\alpha)}\rho^{n+4}\nonumber\\
&\leq\alpha_n\theta^{-(n+2+2\alpha)}\left(\frac{r}{\rho}\right)^{n+2+2\alpha}(2H)^{2}\rho^{n+4}\nonumber\\
&\leq cr^{n+2+2\alpha},
\end{align*}
with $c=c(n,N,p,\nu,L,H,\alpha)$, whence $(\ref{5.10})$.
\end{proof}
\begin{Remark}
Recalling Lemma \ref{le3.1} and (\ref{5.38}), we can see that, in the whole paper, we only need $\rho\in (0,1)$ suitable small, and
it does not tend to zero as $j\longrightarrow\infty$.
\end{Remark}
From Lemma \ref{le4.1} and Lemma \ref{le5.3}, now, we are able to prove Theorem \ref{th1.1}.
\begin{proof}[Proof of Theorem \ref{th1.1}]
Let $z_0\in Q_T \backslash (\Sigma_1\cup\Sigma_2)$, then by the definition of $\Sigma_1\ $and $\Sigma_2$, there exist some constants $ \varepsilon_2\in(0,1]$, $M_0\geq1$ such that
\begin{equation}\label{5.39}
\Xint-_{Q_{\rho}(z_0)}|Du-(Du)_{z_0,\rho}|^p dz\leq\varepsilon_2\\
\end{equation}
and
\begin{equation}\label{5.40}
|(Du)_{z_0,\rho}|\leq M_0.
\end{equation}
Now, in virtue of  (\ref{2.3}), (\ref{5.39}), (\ref{5.40}) and (\ref{4.1}), we are in a position to obtain
\begin{align*}
|Dl_{z_0,\rho}|&=\frac{n+2}{\rho^2}\left|\Xint-_{Q_{\rho}(z_0)}(u-(u)_{z_0,\rho})\otimes(x-x_0)dz\right|\\
&\leq c\Xint-_{Q_{\rho}(z_0)}(1+|Du|)^p dz\\
&\leq c[(1+M_0)^p+\varepsilon_2],
\end{align*}
where $c=c(n,N,p,L)$.

Since $\varepsilon_2\leq1\leq M_0$, then the previous inequality implies that
\begin{equation}\label{5.42}
|Dl_{z_0,\rho}|\leq c(n,N,p,L)M_0^{p}.
\end{equation}
Furthermore, by the minimality of $l_{z_0,\rho}$, (\ref{4.2}) and (\ref{5.39})--(\ref{5.40}), we can see that
\begin{align*}
\Xint-_{Q_\rho(z_0)}\left|\frac{u-l_{z_0,\rho}}{\rho}\right|^pdz
\leq& c(n,p)\Xint-_{Q_{\rho}(z_0)}\left|\frac{u-(u)_{z_0,\rho}-(Du)_{z_0,\rho}(x-x_0)}{\rho}\right|^p dz\nonumber\\
\leq& c\varepsilon_2[(1+M_0)^p+\varepsilon_2]^{p-2}+c(1+M_0)^{p(p-1)}[\omega(\rho^2)]^p[1+M_{0}^{p}+\varepsilon_2]^{2p}\nonumber\\
&+c\rho^p[(1+M_0)^{p^2}+\varepsilon_{2}^{p}].
\end{align*}
This implies  that
\begin{equation}\label{5.46}
\Xint-_{Q_\rho(z_0)}\left|\frac{u-l_{z_0,\rho}}{\rho}\right|^p dz\leq c\varepsilon_2 M_{0}^{p(p-2)}+c[\omega(\rho^2)]^p M_{0}^{2p^3(p-1)}+c\rho^p M_{0}^{p^2},
\end{equation}
with $c=c(n,N,p,L)$.

Appealing to  $(\ref{5.42})$-$(\ref{5.46})$, for suitable small $\varepsilon_2\in (0,1)$, we can deduce the existence of $H\geq 1$ and  $0<\rho\leq\rho_0(H)$ such that $Q_{2\rho}(z_0)\subset Q_T$, and at this stage, we further obtain
\begin{equation*}
1+|Dl_{z_0,\rho}|<H,
\end{equation*}
and
\begin{equation*}
\Phi_1(z_0,\rho,l_{z_0,\rho})<\varepsilon_1(H).
\end{equation*}
Note that the mappings
\begin{equation*}
z\mapsto Dl_{z,\rho},\ \  \  \  \    z\mapsto \Phi_1(z,\rho,l_{z,\rho})
\end{equation*}
are continuous. Thus, there exists $0<R\leq \frac{\rho}{2}$ such that
\begin{equation*}
  1+|Dl_{z,\rho}|<H,
\end{equation*}
and
\begin{equation*}
\Phi_1(z,\rho,l_{z,\rho})<\varepsilon_1(H),
\end{equation*}
for all $z\in Q_{R}(z_0)$. Hence, we infer for the suitable $\rho_0(H)\in(0,1]$, the smallness condition of Lemma \ref{le5.3}  is uniformly hold for  $z\in Q_{R}(z_0)$. Observe that, for all $z\in Q_{R}(z_0)$, there holds $Q_\rho(z)\subset Q_{2\rho}(z_0)\subset Q_T$, then for all $r\in(0,\rho)$ and $z\in Q_R(z_0)$ we obtain
\begin{equation*}
\int_{Q_r(z)}|u-(u)_{z,r}|^2 dz\leq cr^{n+2+2\alpha},
\end{equation*}
with $c=c(n,N,p,\nu,L,H,\alpha)$. By the Campanato space argument (cf. \cite{A15,4A}), we have $u\in C^{0;\alpha,\alpha/2}$ in a neighborhood of any point $z_0\in Q_T \backslash(\Sigma_1\cup\Sigma_2)$, and we further obtain $\left|\Sigma_1\cup\Sigma_2\right|=0$, which means $|Q_0|=|Q_T|$.
\end{proof}
\section{Estimate of singular set}\label{se6}
In this section, with Theorem \ref{th1.1} in hand, now, we proceed to prove Theorem \ref{th1.2}.
Such result will be proved by combining the  fractional time and  fractional space differentiability of gradient of weak solution $u$ to (\ref{1.1}).
\subsection{Fractional time differentiability}\label{se6.1}
In this subsection, we aim to proving the fractional time differentiability of $Du$ for  $p=2$.
First, we estimate the $L^2$-norm of $\tau^h u$.
\begin{Lemma}\label{hdle3.1}
Let $u\in L^{\infty}(-T,0;L^2(\Omega;\mathbb{R}^N))\cap L^2(-T,0;H^1(\Omega;\mathbb{R}^N))$ be a weak solution to (\ref{1.1}).
Let $(t_0,t_1)\subset\subset (-T,0)$ and $\eta\in C_0^{\infty}(\Omega)$ be a cut-off function with $supp (\eta)\subset\subset \Omega$. Then, whenever
$0<|h|\leq\frac{1}{2}\min\{|t_1|, T-|t_0|,1\}$, the following estimate holds
\begin{equation}\label{hd3.1}
\int_{t_0}^{t_1}\int_{\Omega}\eta^2|\tau^hu(x,t)|^{2}dxdt\leq c(|h|\|\eta\|_{L_x^{\infty}}^2+|h|\|\nabla\eta\|_{L_x^{\infty}}^2)\int_{Q_T}(1+|Du|^2)dxdt,
\end{equation}
where $c=c(L)$.
\end{Lemma}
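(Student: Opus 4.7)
The plan is to convert the left-hand side of (\ref{hd3.1}) into something handled by the equation via the Steklov-average identity $\tau^h u(x,t) = h\,\partial_t u_h(x,t)$, which rewrites the target quantity as
\begin{equation*}
\int_{t_0}^{t_1}\!\!\int_\Omega \eta^2 |\tau^h u|^2\,dxdt = h\int_{t_0}^{t_1}\!\!\int_\Omega \eta^2\,\tau^h u \cdot \partial_t u_h\,dxdt.
\end{equation*}
The right-hand side of this identity is exactly the kind of term the Steklov-averaged weak formulation (\ref{hd2.2}) is built to control.

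First I would test (\ref{hd2.2}), for each fixed $t\in(t_0,t_1)$, with the spatial test function $\varphi(x):=\eta^2(x)\,\tau^h u(x,t)$, which lies in $H^1_0(\Omega;\mathbb{R}^N)$ since $\eta\in C_0^\infty(\Omega)$ and $\tau^h u(\cdot,t)\in H^1(\Omega;\mathbb{R}^N)$. Computing $D\varphi = 2\eta\,\nabla\eta\otimes \tau^h u + \eta^2\,\tau^h Du$, multiplying by $h$, and integrating over $t\in(t_0,t_1)$ gives the working identity
\begin{align*}
\int_{t_0}^{t_1}\!\!\int_\Omega \eta^2|\tau^h u|^2\,dxdt
= &- h\int_{t_0}^{t_1}\!\!\int_\Omega [a(z,u,Du)]_h\cdot\bigl(2\eta\,\nabla\eta\otimes\tau^h u + \eta^2\,\tau^h Du\bigr)\,dxdt\\
&+ h\int_{t_0}^{t_1}\!\!\int_\Omega [b(z,u,Du)]_h\cdot\eta^2\,\tau^h u\,dxdt.
\end{align*}
Then I would estimate the three resulting terms using the growth conditions. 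For $p=2$, (\ref{2.01}) gives $|a|\le L(1+|Du|)$, so Jensen's inequality for Steklov averages yields $\|[a]_h\|_{L^2(Q')}^2\le cL^2\int_{Q_T}(1+|Du|^2)\,dz$, and analogously for $[b]_h$ via (\ref{2.5}). The principal term involving $\tau^h Du$ is controlled directly by Cauchy–Schwarz together with the trivial bound $\|\tau^h Du\|_{L^2}\le 2\|Du\|_{L^2}$; this yields a contribution of order $h\,\|\eta\|_\infty^2\int_{Q_T}(1+|Du|^2)$, which is the dominant $O(h)$ term. The two cross-terms involving $\tau^h u$ (without derivative) are absorbed by Young's inequality: a typical step is
\begin{equation*}
h\!\int\!\!\int |[a]_h|\,2\eta|\nabla\eta|\,|\tau^h u|\,dxdt \le \tfrac{1}{4}\!\int\!\!\int \eta^2|\tau^h u|^2\,dxdt + c\,h^2\,\|\nabla\eta\|_\infty^2\!\int_{Q_T}\!(1+|Du|^2)\,dz,
\end{equation*}
and since $|h|\le 1$ the $h^2$ factor is dominated by $|h|$. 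A final absorption moves the $\tfrac14\int\eta^2|\tau^h u|^2$ back to the left and produces the claimed bound.

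The main technical point is the inhomogeneous contribution $|u|^{q_1}$ inside $b$. Since $q_1\le(n+2)/n$ in the case $p=2$, I would use the parabolic Ladyzhenskaya interpolation available from $u\in L^\infty(-T,0;L^2)\cap L^2(-T,0;H^1)$ to embed $u\in L^{2(n+2)/n}(Q_T)$, which gives $|u|^{q_1}\in L^2(Q_T)$ with norm controlled by $\|u\|_{L^\infty(L^2)}$ and $\|Du\|_{L^2}$; the resulting bound is then subsumed into the factor $\int_{Q_T}(1+|Du|^2)\,dz$ on the right-hand side, after which the constant depends only on $L$ and the structural data. The rest is routine, with the restriction $|h|\le\tfrac12\min\{|t_1|,T-|t_0|,1\}$ ensuring that every Steklov average used is taken over a time window that remains inside $(-T,0)$.
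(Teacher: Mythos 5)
Your proposal is correct and follows essentially the same route as the paper: testing the Steklov-averaged formulation with $\varphi=\eta^2\tau^h u$, using $\partial_t u_h=\tau^h u/h$ to produce the quadratic term, bounding the $[a]_h\cdot\eta^2\tau^h Du$ term by Cauchy--Schwarz as the dominant $O(|h|)$ contribution, absorbing the cross terms in $\tau^h u$ by Young's inequality, and handling $|u|^{q_1}$ in $[b]_h$ via the Ladyzhenskaya-type interpolation already used in (\ref{4.92})--(\ref{4.96}). The only cosmetic difference is that you multiply the identity by $h$ first while the paper keeps the factor $|\tau^h u|^2/h$ on the left and absorbs against it, which is the same computation.
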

\begin{proof}
First, we restrict ourselves to  the case $h>0$,   choosing  $\eta^2 \tau^h u$ as a test function in the Steklov averages formulation of (\ref{hd2.2}),
and integrating with respect to $t\in (t_0, t_1)$, we deduce
\begin{align}\label{hd3.2}
\int_{t_0}^{t_1 }\int_{\Omega}\eta^2 \frac{|\tau^hu|^{2}}{h}dxdt=&-  \int_{t_0}^{t_1 }\int_{\Omega}[a(z,u,Du)]_h\cdot(2\eta\nabla \eta\otimes \tau^h u+\eta^2D\tau^h u)dxdt\nonumber\\
& + \int_{t_0}^{t_1 }\int_{\Omega} [b(z,u,Du)]_h\cdot\eta^2 \tau^h udxdt \nonumber\\
:=&I_1+I_2+I_3.
\end{align}
Taking into account ($H_1$), ($H_3$) and Young's inequality, there holds
\begin{equation}\label{hd3.3}
|I_1|\leq \frac{1}{4} \int_{t_0}^{t_1 }\int_{\Omega}\eta^2 \frac{|\tau^hu|^{2}}{h}dxdt+4L^2\|\nabla \eta\|_{L_x^{\infty}}^2h\int_{Q_T}(1+|Du|^2)dxdt.
\end{equation}
For the term $I_2$, from the H\"{o}lder's inequality, it follows that
\begin{align}\label{hd3.4}
|I_2|\leq &\left(\int_{t_0}^{t_1}\int_{\Omega}\eta^2|[a(z,u,Du)]_h|^2dxdt\right)^{\frac{1}{2}}\left(\int_{t_0}^{t_1}\int_{\Omega}\eta^2|D\tau^h u|^2dxdt\right)^{\frac{1}{2}}\nonumber\\
\leq&2L\left(\|\eta\|_{L_x^{\infty}}^2\int_{t_0}^{t_1+h}\int_{\Omega}(1+|Du|^2)dxdt\right)^{\frac{1}{2}}
\left(\|\eta\|_{L_x^{\infty}}^2\int_{t_0}^{t_1+h}\int_{\Omega}|D u|^2dxdt\right)^{\frac{1}{2}}\nonumber\\
\leq& 2L\|\eta\|_{L_x^{\infty}}^2\int_{Q_T}(1+|Du|^2)dxdt.
\end{align}
Similarly,  for $p=2$, applying (\ref{4.92})-(\ref{4.96}), the term $I_3$ can be estimated as
\begin{align}\label{hd3.5}
|I_3|&\leq \int_{t_0}^{t_1}\int_{\Omega}[(1+|Du|)+|u|^{q_1}]\cdot\eta^2\tau^{h}udxdt\nonumber\\
&\leq c(L)\|\eta\|_{L_x^{\infty}}^2h\int_{Q_T}(1+|Du|^2)dxdt+\frac14\int_{t_0}^{t_1}\int_{\Omega}\frac{\eta^2|\tau^h u|^2}{h}dxdt.
\end{align}
Finally, we note that the estimation in the other one being the same using $u_{\bar{h}}$ instead of $u_h$. Now, inserting (\ref{hd3.3})-(\ref{hd3.5}) into
(\ref{hd3.2}), we  obtain
\begin{equation*}
\int_{t_0}^{t_1}\int_{\Omega}\eta^2\frac{|\tau^hu|^2}{|h|}dxdt\leq c(\|\eta\|_{L_x^{\infty}}^2+\|\nabla\eta\|_{L_x^{\infty}}^2)
\int_{Q_T}(1+|Du|^2)dxdt,
\end{equation*}
where $c=c(L)$.  Thus, we have (\ref{hd3.1}).
\end{proof}
From Lemma \ref{hdle3.1},  we have a direct result:
\begin{Remark}
Let $u\in L^{\infty}(-T,0;L^2(\Omega;\mathbb{R}^N))\cap L^2(-T,0;H^1(\Omega;\mathbb{R}^N))$ be a weak solution to (\ref{1.1}). Let
$(t_0,t_1)\subset\subset (-T,0)$ and $\Omega'\subset\subset \Omega$. Then, whenever
$0<|h|\leq\frac{1}{2}\min\{|t_1|, T-|t_0|,1\}$, there holds
\begin{equation}\label{hd3.7}
\int_{t_0}^{t_1}\int_{\Omega'}|\tau^hu(x,t)|^2dxdt\leq c|h|\int_{Q_T}(1+|Du|^2)dxdt,
\end{equation}
where $c=c(L,dist(\Omega',\partial\Omega))$.
\end{Remark}
Based on Lemma \ref{hdle3.1}, we now propose to estimate the time derivative of $Du$, which will be as the starting point of an iteration process.
\begin{Lemma}\label{hdle3.2}
Let $u\in L^{\infty}(-T,0;L^2(\Omega;\mathbb{R}^N))\cap L^2(-T,0;H^1(\Omega;\mathbb{R}^N))$ be a weak solution to (\ref{1.1}).
Let $(t_0,t_1)\subset\subset (-T,0)$ and $\eta\in C_0^{\infty}(\Omega)$ be a cut-off function with $supp (\eta)\subset\subset \Omega$. Then, there exists
a constant $c=c(L,\nu,dist(supp (\eta),\partial\Omega), t_0, t_1)$, such that whenever
$0<|h|< \min\{|t_1|,T-|t_0|,1, dist(supp(\eta),\partial\Omega)\}$, there holds
\begin{equation}\label{hd3.8}
\int_{t_0}^{t_1}\int_{\Omega}|\tau^{-h}Du|^2\eta^2dxdt\leq c|h|^{\frac{1}{2}}\int_{Q_T}(1+|Du|^2)dxdt.
\end{equation}
\end{Lemma}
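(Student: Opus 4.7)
The plan is to derive a Caccioppoli-type inequality for the time-difference $v:=\tau^{-h}u$ by testing a differenced form of the equation, and then to extract the $|h|^{1/2}$-rate from the $L^2$-bound on $v$ provided by Lemma \ref{hdle3.1}. Working at the Steklov-averaged level to legitimize the time differentiation, I would subtract (\ref{hd2.2}) at time $t$ from its counterpart at $t-h$; passing the Steklov parameter to zero then yields a weak identity of the form
\begin{equation*}
\int_{Q_T}\partial_t v\cdot\varphi\,dz+\int_{Q_T}\tau^{-h}a(z,u,Du)\cdot D\varphi\,dz-\int_{Q_T}\tau^{-h}b(z,u,Du)\cdot\varphi\,dz = 0,
\end{equation*}
where $\tau^{-h}a$ denotes here the full time-translation of $a$, and similarly for $b$. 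The natural test function is $\varphi=\eta^2\zeta^2 v$, with $\zeta\in C_0^\infty((-T,0))$ a temporal cutoff equal to $1$ on $(t_0,t_1)$ and $|\zeta'|\leq C(t_0,t_1)$.

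Next, I would decompose $\tau^{-h}a=A_1+A_2+A_3$ according to the successive changes in the $F$-, $u$-, and $z$-arguments of $a$. The monotonicity estimate (\ref{hd2.5}), specialized to $p=2$, produces the main lower bound
\begin{equation*}
\int\eta^2\zeta^2 A_1\cdot Dv\,dz\geq c\int\eta^2\zeta^2|\tau^{-h}Du|^2\,dz,
\end{equation*}
which is precisely the target quantity. The time-derivative contribution becomes $-\int\eta^2\zeta\zeta'|v|^2\,dz$, controlled by $c|h|\int(1+|Du|^2)$ via Lemma \ref{hdle3.1}. The boundary term $2\int\eta\nabla\eta\zeta^2\tau^{-h}a\cdot v\,dz$ will be handled by Cauchy--Schwarz, using the pointwise bound $|\tau^{-h}a|\leq C(1+|Du|)$ from (\ref{2.01}) together with the sharp estimate $\|v\|_{L^2}^2\leq c|h|$ of Lemma \ref{hdle3.1}; this contributes at most $c|h|^{1/2}\int(1+|Du|^2)$. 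The $b$-term will be treated analogously, using (\ref{2.5}) and the computation (\ref{4.92})--(\ref{4.96}) from the proof of the Poincar\'e inequality.

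The hard part will be controlling the error integrals $\int\eta^2\zeta^2(A_2+A_3)\cdot Dv\,dz$. A Young splitting absorbs a small multiple $\epsilon\int\eta^2|Dv|^2$ into the left-hand side, leaving a remainder that involves $\int\omega(|v|^2)^2(1+|Du|)^2\eta^2\,dz$ and $\omega(h^2)^2\int(1+|Du|)^2\eta^2\,dz$. Extracting the $|h|^{1/2}$-rate from these pieces is delicate because $\omega$ is only assumed bounded and concave with no quantitative H\"older rate; my plan is to combine $\omega^2\leq\omega\leq 1$, Jensen's inequality applied to $\int\omega(|v|^2)\,dz$ (which yields an $\omega(c|h|)$-type bound), and the quantitative $L^2$-smallness $\int|v|^2\leq c|h|$ from Lemma \ref{hdle3.1}. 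The desired $|h|^{1/2}$ will emerge as the geometric mean of the trivial pointwise bound $|Dv|\leq 2|Du|$ and the quantitative $L^2$-smallness of $v$. Collecting all the estimates and absorbing the $\epsilon$-term then yields (\ref{hd3.8}), with the constant $c$ depending on $L$, $\nu$, $\|\nabla\eta\|_{L^\infty}$, $\|\eta\|_{L^\infty}$, and $\|\zeta'\|_{L^\infty}$, hence on $\mathrm{dist}(\mathrm{supp}(\eta),\partial\Omega)$, $t_0$, and $t_1$ in the desired way.
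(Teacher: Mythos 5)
Your overall skeleton is the same as the paper's: difference the equation in time (the paper equivalently puts $\tau^h$ on the test function $\eta^2\xi^2\chi_{\varepsilon}\tau^{-h}u_{\lambda}$ at the Steklov level), split $\tau^{-h}a$ into the piece where only the gradient argument is translated plus the piece where $(z,u)$ are translated, use the monotonicity (\ref{hd2.5}) for the main term, and control the $\nabla\eta$-term, the $\zeta'$-term and the $b$-term by Cauchy--Schwarz together with the bound $\int|\tau^{-h}u|^2\lesssim|h|$ from Lemma \ref{hdle3.1} and (\ref{4.96}). All of that matches the paper's estimates of $H_{51}$, $H_3$, $H_6$ and $H_{521}$.

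The genuine gap is in your treatment of the $(z,u)$-translation terms $A_2+A_3$. After your Young splitting you are left with quantities of the type $\int\omega(|h|+|\tau^{-h}u|^2)^2(1+|Du|)^2\eta^2\,dz$, and your plan is to extract the rate $|h|^{1/2}$ from $\omega^2\le\omega$, Jensen, and $\int|\tau^{-h}u|^2\lesssim|h|$. This cannot work: $\omega$ is only assumed bounded, concave, nondecreasing with $\omega(0^+)=0$, so Jensen only yields a bound of the form $\omega(c|h|)$, which tends to zero with \emph{no} rate; no power of $|h|$ can be produced from a rateless modulus, and your ``geometric mean'' of the pointwise bound $|Dv|\le|Du(x,t)|+|Du(x,t-h)|$ with the $L^2$-smallness of $v$ only concerns the $|Dv|$ factor, not the $\omega$ factor. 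The paper closes exactly this point by invoking (\ref{x2.3}), i.e.\ the consequence of Theorem \ref{th1.1} that off the null set $K=\Sigma_1\cup\Sigma_2$ one has $\omega\bigl(d(z,z_0)^2+|u-u_0|^2\bigr)\lesssim d(z,z_0)+1_K$; for a pure time translation $d(z,z_{-h})\approx\sqrt{|h|}$, so in the estimate of $H_{522}$ the modulus is replaced a.e.\ by a power of $|h|$ before Young's inequality, and the remainder is quantitative (the paper records it as $c\,h^2\int(1+|Du|^2)$). Without this input from the partial regularity theorem (or an explicitly assumed H\"older/Dini rate for $\omega$), your argument only gives $\int_{t_0}^{t_1}\int_\Omega|\tau^{-h}Du|^2\eta^2\,dx\,dt=o(1)$ as $h\to0$, which is not enough for (\ref{hd3.8}) nor for the iteration (\ref{hd3.18})--(\ref{hd3.21}) that needs an explicit exponent.
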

\begin{proof}
We choose  $\chi_{\varepsilon}:\mathbb{R}\longmapsto [0,1]$ be a continuous affine function satisfying
\begin{align*}
\chi_{\varepsilon} (t)=
\begin{cases}
 1&t\in (-\infty,l), \\
 \frac{l+\varepsilon-t}{\varepsilon} & t\in [l,l+\varepsilon],\\
 0&t\in (l+\varepsilon,\infty),
  \end{cases}
\end{align*}
which approximating the characteristic function of $(-\infty,l)$ with $l\in (-T,0)$. Let $\xi(t)$, $\eta (x)$ are cut-off functions in the time and space
variables, respectively, such that
\begin{align*}
\begin{cases}
\xi(t)=1 & t\in (t_0,t_1), \\
supp (\xi) \subset (\tilde{t}_0,\tilde{t}_1)& \tilde{t}_0=\frac{-T+t_0}{2},\ \tilde{t}_1=\frac{t_1}{2}.
\end{cases}
\end{align*}
We take $\phi(x,t)=\tau^{h}(\eta^2(x)\xi^2(t)\chi_{\varepsilon}(t)\tau^{-h}u_{\lambda}(x,t))$ as a test function in (\ref{hd2.2}), and integrating in time
respect to $(-T,0)$, yields that
\begin{align}\label{hd3.9}
&\int_{-T}^{0}\int_{\Omega}\partial_tu_{\lambda}\cdot\tau^{h}(\eta^2(x)\xi^2(t)\chi_{\varepsilon}(t)\tau^{-h}u_{\lambda}(x,t))\nonumber\\
&+[a(z,u,Du)]_{\lambda}\cdot D\tau^{h}(\eta^2(x)\xi^2(t)\chi_{\varepsilon}(t)\tau^{-h}u_{\lambda}(x,t))\nonumber\\
&-[b(z,u,Du)]_{\lambda}\cdot\tau^{h}(\eta^2(x)\xi^2(t)\chi_{\varepsilon}(t)\tau^{-h}u_{\lambda}(x,t))dxdt=0.
\end{align}
We first note that
\begin{equation}\label{hd3.10}
\partial_t(|\tau^{-h}u_{\lambda}|^2\eta^2\xi^2\chi_{\varepsilon})
=2\tau^{-h}u_{\lambda}\partial_t(\tau^{-h}u_{\lambda})\eta^2\xi^2\chi_{\varepsilon}
 +2|\tau^{-h}u_{\lambda}|^2\eta^2\xi'\xi\chi_{\varepsilon}+|\tau^{-h}u_{\lambda}|^2\eta^2\xi^2\chi_{\varepsilon}',
\end{equation}
and
\begin{align*}
  H_1:= & \int_{-T}^{0}\int_{\Omega}\partial_tu_{\lambda}\cdot\tau^{h}(\eta^2(x)\xi^2(t)\chi_{\varepsilon}(t)\tau^{-h}u_{\lambda}(x,t))dxdt\\
=& \int_{-T}^{0}\int_{\Omega}\partial_t(\tau^{-h}u_{\lambda})\cdot(-\eta^2(x)\xi^2(t)\chi_{\varepsilon}(t)\tau^{-h}u_{\lambda}(x,t))dxdt.
\end{align*}
This, combined with (\ref{hd3.10}) implies that
\begin{align*}
H_1= & \frac{1}{2} \int_{-T}^{0}\int_{\Omega}|\tau^{-h}u_{\lambda}|^2\eta^2\xi^2\chi_{\varepsilon}'dxdt
+\int_{-T}^{0}\int_{\Omega}|\tau^{-h}u_{\lambda}|^2\eta^2\xi'\xi\chi_{\varepsilon}\\
&-\frac{1}{2} \int_{-T}^{0}\int_{\Omega}\partial_t(|\tau^{-h}u_{\lambda}|^2\eta^2\xi^2\chi_{\varepsilon})dxdt\\
:= &H_2+H_3+H_4.
\end{align*}
Recalling   the definition of $\chi_{\varepsilon}$, we can find that
\begin{equation*}
H_2=-\frac{1}{2}\Xint-_{l}^{l+\varepsilon}\int_{\Omega}|\tau^{-h}u_{\lambda}|^2\eta^2\xi^2dxdt.
\end{equation*}
Moreover, note  that $H_4=0$. Then, by passing to the limit for $\lambda\longrightarrow0$ in (\ref{hd3.9}), we obtain
\begin{align}\label{hd3.11}
&-\frac{1}{2}\Xint-_{l}^{l+\varepsilon}\int_{\Omega}|\tau^{-h}u|^2\eta^2\xi^2dxdt+\int_{Q_T}|\tau^{-h}u|^2\eta^2\xi'\xi\chi_{\varepsilon}dxdt \nonumber\\
&+ \int_{Q_T}a(z,u,Du)\cdot D\tau^{h}(\eta^2(x)\xi^2(t)\chi_{\varepsilon}(t)\tau^{-h}u(x,t))dxdt\nonumber\\
&-\int_{Q_T}b(z,u,Du)\cdot \tau^h(\eta^2\xi^2\chi_{\varepsilon}(t)\tau^{-h}u) dxdt:=H_2+H_3+H_5+H_6=0.
\end{align}
Observe that
\begin{align*}
H_5= & \int_{Q_T}a(z,u,Du)\cdot \tau^{h}(2\eta\nabla\eta\otimes\xi^2\chi_{\varepsilon}\tau^{-h}u+\eta^2\xi^2\chi_{\varepsilon}\tau^{-h}Du)dxdt \\
= & \int_{Q_T}a(z,u,Du)\cdot \tau^{h}(2\eta\nabla\eta\otimes\xi^2\chi_{\varepsilon}\tau^{-h}u)dxdt
-\int_{Q_T}\tau^{-h}[a(z,u,Du)]\cdot(\eta^2\xi^2\chi_{\varepsilon}\tau^{-h}Du)dxdt\\
:=&H_{51}-H_{52},
\end{align*}
and
\begin{align*}
 H_{52}&= \int_{Q_T}[\tau^{-h}_3a((x,t-h),u(x,t-h),Du)+\tau^{-h}_{1,2}a(z,u,Du)]\cdot(\eta^2\xi^2\chi_{\varepsilon}\tau^{-h}Du)dxdt \\
 &:= H_{521}+ H_{522}.
\end{align*}
By the aid of (\ref{hd2.5}), there holds
\begin{equation}\label{hd3.12}
 H_{521}\geq c_1\int_{-T}^{0}\int_{\Omega}|\tau^{-h}Du|^2\eta^2\xi^2\chi_{\varepsilon}dxdt,
\end{equation}
where $c_1$ depends on $\nu$. Furthermore, applying ($H_3$), (\ref{x2.3}) and Young's inequality, it holds that
\begin{equation}\label{hd3.011}
|H_{522}|\leq \frac{c_1}{2}\int_{-T}^{0}\int_{\Omega}|\tau^{-h}Du|^2\eta^2\xi^2\chi_{\varepsilon}dxdt+ch^2\int_{Q_T}(1+|Du|^2)dxdt.
\end{equation}
For the term $H_{51}$, making  use of (\ref{hd3.7}), we obtain
\begin{align}\label{hd3.13}
|H_{51}|\leq& \left|\int_{Q_T}a(z,u,Du)\cdot (2\eta\nabla\eta\otimes\xi^2(t+h)\tau^{h}(\tau^{-h}u))dxdt\right|\nonumber\\
&+\left|\int_{Q_T}a(z,u,Du)\cdot (2\eta\nabla\eta\otimes\tau^{h}(\xi^2\chi_{\varepsilon})\tau^{-h}u)dxdt\right| \nonumber\\
\leq & c\int_{\tilde{t}_0}^{\tilde{t}_1+h}\int_{supp(\eta)}(1+|Du|)\|\nabla\eta\|_{L^{\infty}_x}|\tau^{h}(\tau^{-h}u)|dxdt\nonumber\\
&+c\int_{\tilde{t}_0}^{\tilde{t}_1+h}\int_{supp(\eta)}(1+|Du|)\|\nabla\eta\|_{L^{\infty}_x}|\tau^{-h}u|dxdt\nonumber\\
\leq & c\left(\int_{\tilde{t}_0}^{\tilde{t}_1+h}\int_{supp(\eta)}(1+|Du|^2)dxdt\right)^{\frac{1}{2}}
\left(\int_{\tilde{t}_0}^{\tilde{t}_1+2h}\int_{supp(\eta)}|\tau^{-h}u|^2dxdt\right)^{\frac{1}{2}}\nonumber\\
\leq &c|h|^{\frac{1}{2}} \int_{Q_T}(1+|Du|^2)dxdt,
\end{align}
with $c=c(L,dist(supp(\eta),\partial\Omega))$.

Concerning $H_6$, in virtue of (\ref{4.96}) and (\ref{hd3.7}), we have
\begin{align}\label{hd3.16}
H_6&\leq\left(\int_{Q_T}|b(z,u,Du)|^2dxdt\right)^{\frac{1}{2}}
\left(\int_{Q_T}|\tau^h(\eta^2\xi^2\chi_{\varepsilon}(t)\tau^{-h}u)|^2dxdt\right)^{\frac{1}{2}}\nonumber\\
&\leq c|h|^{\frac{1}{2}} \int_{Q_T}(1+|Du|^2)dxdt,
\end{align}
where $c=c(L,dist(supp(\eta),\partial\Omega))$.

Note that $H_2\leq0$, combining (\ref{hd3.11})-(\ref{hd3.16}) and (\ref{hd3.7}),  we finally obtain
\begin{align*}
\int_{-T}^{0}\int_{\Omega} |\tau^{-h}Du|^2\eta^2\xi^2\chi_{\varepsilon}dxdt\leq& c|h|^{\frac{1}{2}}\int_{Q_T}(1+|Du|^2)dxdt
+c \int_{Q_T} |\tau^{-h}u|^2\eta^2\xi\xi'\chi_{\varepsilon}dxdt\\
\leq &  c|h|^{\frac{1}{2}}\int_{Q_T}(1+|Du|^2)dxdt+c|h|\int_{Q_T}(1+|Du|^2)dxdt\\
\leq &  c|h|^{\frac{1}{2}}\int_{Q_T}(1+|Du|^2)dxdt,
\end{align*}
where $c=c(L,\nu, dist (supp(\eta),\partial\Omega))$.

Applying the property of $\xi(t)$ and $\eta(x)$, letting $\varepsilon\longrightarrow0$ and $l\longrightarrow0$, then we have (\ref{hd3.8}).
\end{proof}
According to (\ref{hd3.8}), we can re-estimate the term  $I_2$ in (\ref{hd3.4}):
\begin{align}\label{hd3.18}
|I_2| \leq&\left(\int_{t_0}^{t_1}\int_{\Omega}\eta^2|[a(z,u,Du)]_h|^2dxdt\right)^{\frac{1}{2}}
\left(\int_{t_0}^{t_1}\int_{\Omega}\eta^2|\tau^{h}Du|^2dxdt\right)^{\frac{1}{2}} \nonumber\\
\leq& c|h|^{\frac{1}{4}}\|\eta\|_{L^{\infty}}^{2}\int_{Q_T}(1+|Du|^2)dxdt.
\end{align}
From (\ref{hd3.18}) and note that $|h|<1$, then we can rewrite (\ref{hd3.7}) as  follows
\begin{equation}\label{hd3.19}
\int_{t_0}^{t_1}\int_{\Omega'}|\tau^hu(x,t)|^{2}dxdt\leq c_2|h|^{1+\frac{1}{4}}\int_{Q_T}(1+|Du|^2)dxdt,
\end{equation}
where $c_2=c_2(L,\nu, dist (supp(\eta),\partial\Omega))$.

Similarly, in the proof of Lemma \ref{hdle3.2}, we can use (\ref{hd3.19}) instead of (\ref{hd3.7}), then we arrive at
\begin{equation}\label{hd3.20}
 \int_{t_0}^{t_1}\int_{\Omega}|\tau^hDu(x,t)|^{2}\eta^2dxdt\leq c_2|h|^{\frac{1}{2}(1+\frac{1}{4})}\int_{Q_T}(1+|Du|^2)dxdt.
\end{equation}
Like the estimation of (\ref{hd3.18})-(\ref{hd3.20}), by iteration argument, we finally obtain
\begin{equation}\label{hd3.21}
 \int_{t_0}^{t_1}\int_{\Omega}|\tau^hDu(x,t)|^{2}\eta^2dxdt\leq c_k|h|^{a_k}\int_{Q_T}(1+|Du|^2)dxdt,
\end{equation}
with $a_k=\frac{1}{2}(1+\frac{a_{k-1}}{2})$ and  $a_0=\frac{1}{2}$, $c_k=c_k(L,\nu, dist
(supp(\eta),\partial\Omega))$. Letting $k\longrightarrow\infty$, we have $a_k\longrightarrow\frac{2}{3}$, applying the property of $\eta(x)$, then we
obtain (\ref{hd2.6}).
\subsection{Fractional space differentiability}
In this subsection,  we propose to deduce the fractional space differentiability of the gradient of weak solution $u$ to (\ref{1.1}). To begin with, we
discuss the general case for $p\geq2$.
\begin{Lemma}
Let $\Omega\subset\mathbb{R}^n$ $(n=2,3)$ be a bounded  domain. Let $u\in L^{\infty}(-T,0;L^2(\Omega;\mathbb{R}^N))\cap L^p(-T,0;$
$W^{1,p}(\Omega;\mathbb{R}^N))$ $(p\geq2)$ be a weak solution to (\ref{1.1}). Then, we have
\begin{equation}\label{hd4.1}
Du\in L_{loc}^2(-T,0;W^{\theta;2}_{loc}(\Omega)),
\end{equation}
and for any $Q_{2\rho}(z_0)\subset  Q_T$, there holds
\begin{align}\label{hd4.2}
 &\lim_{h\longrightarrow0}\sup_{t_0-(\rho/2)^2<t<t_0} \int_{B_{\rho/2}(x_0)}\frac{|\tau_hu(x,t_0)|^2}{|h|^{2\theta}}dx
 +\lim_{h\longrightarrow0} \int_{Q_{\rho/2}(z_0)}\frac{|\tau_h^3a(z,u,Du)|^2}{|h|^{2\theta}}dxdt\nonumber\\
 & \leq\frac{c}{\rho^2}\int_{Q_{2\rho}(z_0)}(1+|Du|^2)^{\frac{p}{2}}dxdt
 +\frac{c}{\rho^2}\int_{Q_{2\rho}(z_0)}|Du|^2dxdt+c\int_{t_0-(2\rho)^2}^{0}(1+\|u\|_{W^{1,p}(B_{2\rho}(x_0))}^{2})dt,
\end{align}
where $\theta\in(0,\frac12]$ and  $c=c(n,p,L,\nu)$.
\end{Lemma}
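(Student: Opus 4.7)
My plan is to mimic the Steklov-averaged argument of Lemma \ref{hdle3.2}, but replacing the temporal shift $\tau^{h}$ by the spatial shift $\tau_{h}\equiv\tau_{h,i}$ in the $i$-th coordinate direction. The natural test function in the Steklov formulation (\ref{hd2.2}) will be
\begin{equation*}
\varphi(x,t):=\tau_{-h}\bigl(\eta^{2}(x)\xi^{2}(t)\chi_{\varepsilon}(t)\tau_{h}u_{\lambda}(x,t)\bigr),
\end{equation*}
where $\eta\in C_{0}^{\infty}(B_{2\rho}(x_{0}))$ is a spatial cut-off with $\eta\equiv1$ on $B_{\rho/2}(x_{0})$ and $|\nabla\eta|\lesssim\rho^{-1}$, $\xi\in C_{0}^{\infty}((t_{0}-(2\rho)^{2},t_{0}))$ equals $1$ on $(t_{0}-(\rho/2)^{2},t_{0})$, and $\chi_{\varepsilon}$ is the affine approximation to $\mathbf{1}_{(-\infty,l)}$ already used in Section \ref{se6.1}. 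The spatial analogue of the discrete integration-by-parts identity allows one to transfer $\tau_{-h}$ onto each term: $\int f\,\tau_{-h}g\,dx=-\int(\tau_{h}f)\,g\,dx$, provided $|h|<\operatorname{dist}(\operatorname{supp}\eta,\partial\Omega)$.

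Exactly as in (\ref{hd3.11}), the parabolic term produces $-\tfrac{1}{2}\langle|\tau_{h}u|^{2}\eta^{2}\xi^{2}\rangle_{l,l+\varepsilon}$ plus an error from $\xi'$; letting $\varepsilon\to 0$ and $l\to t_{0}$ yields the supremum term
\begin{equation*}
\sup_{t_{0}-(\rho/2)^{2}<t<t_{0}}\int_{B_{\rho/2}(x_{0})}|\tau_{h}u(x,t)|^{2}\,dx.
\end{equation*}
The diffusion term, after the shift, becomes $\int_{Q_{T}}\tau_{h}a(z,u,Du)\cdot D(\eta^{2}\xi^{2}\chi_{\varepsilon}\tau_{h}u)\,dz$, which I decompose as $\tau_{h}a=\tau_{h}^{1,2}a+\tau_{h}^{3}a$ and expand the derivative into a principal piece paired with $\tau_{h}Du$ and a cross piece paired with $2\eta\,\nabla\eta\otimes\tau_{h}u$. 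The principal piece involving $\tau_{h}^{3}a\cdot\tau_{h}Du$ is bounded below by the monotonicity (\ref{hd2.5}) by $c\int(1+|Du|+|Du(\cdot+he_{i},\cdot)|)^{p-2}|\tau_{h}Du|^{2}\eta^{2}\xi^{2}\chi_{\varepsilon}\,dz$; by Lemma \ref{hdle2.1} this quantity also controls $\int|\tau_{h}^{3}a(z,u,Du)|^{2}\eta^{2}\xi^{2}\chi_{\varepsilon}\,dz$, which is the second object to be estimated.

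The remaining terms are absorbed or turned into $|h|^{2\theta}$-factors in the following manner. The $\tau_{h}^{1,2}a$ term is controlled by (\ref{2.03}) via $|\tau_{h}^{1,2}a|\leq L\omega(|h|^{2}+|\tau_{h}u|^{2})(1+|Du|)^{p-1}$; combining this with the classical bound $\|\tau_{h}u\|_{L^{p}(B_{\rho})}\leq c|h|\,\|Du\|_{L^{p}(B_{2\rho})}$ and the Young inequality, one absorbs a small multiple of $\int|\tau_{h}Du|^{2}\eta^{2}\xi^{2}$ into the left-hand side while the rest is bounded by $\rho^{-2}\int_{Q_{2\rho}}(1+|Du|)^{p}\,dz$. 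The $b$-term is controlled by (\ref{2.5}) together with the Ladyzhenskaya-type bound (\ref{4.96}) already established in Section \ref{se4}, which yields $\int|b|^{2}\lesssim\int(1+|Du|^{2})+\int\|u\|_{W^{1,p}}^{2}\,dt$. Finally, rewriting the estimate with the $|h|^{2\theta}$-scaling via the elementary estimate $|h|^{2-2\theta}\leq 1$ for $|h|\leq 1$ and $\theta\leq 1/2$, dividing both sides by $|h|^{2\theta}$, and passing to the $\limsup$ as $h\to 0$, gives (\ref{hd4.2}); the fractional-space inclusion (\ref{hd4.1}) then follows from the spatial half of Proposition \ref{pr2.1} applied to $Du$.

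\textbf{Main obstacle.} The subtlest point is the treatment of $\tau_{h}^{1,2}a$: since $\omega$ is only an abstract modulus of continuity with no polynomial rate, one cannot directly extract a positive power of $|h|$ from $\omega(|h|^{2}+|\tau_{h}u|^{2})$. My plan is to exploit the uniform bound $\omega\leq 1$ together with the Sobolev estimate $|\tau_{h}u|\lesssim|h|\,|Du|$ in $L^{p}$, which yields a factor $|h|^{2\theta}$ for any $\theta\leq 1/2$ after a Young-inequality split between the factor $(1+|Du|)^{p-1}$ and the difference quotient; carrying this through while simultaneously absorbing a small multiple of $\int|\tau_{h}Du|^{2}\eta^{2}$ into the monotone lower bound is the delicate bookkeeping step.
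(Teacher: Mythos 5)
Your overall architecture (spatial difference-quotient test function, splitting $\tau_h a=\tau_h^{1,2}a+\tau_h^{3}a$, the monotonicity lower bound weighted by $D(h)^{\frac{p-2}{2}}$, Lemma \ref{hdle2.1} to convert that quantity into $\int|\tau_h^3a|^2$, and the Ladyzhenskaya-type bound (\ref{4.96}) for the $b$-term) is the same as the paper's. But there is a genuine gap exactly at the point you flag as the ``main obstacle'', and your proposed workaround does not close it. The troublesome term is the \emph{principal} piece
\begin{equation*}
\int \chi\,\psi^2\,\tau_h^{1,2}a(z,u,Du)\cdot \tau_h Du\,dx\,dt ,
\end{equation*}
which is paired with $\tau_h Du$, not with $\tau_h u$. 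Your devices --- $\omega\le 1$ and $\|\tau_h u\|_{L^p}\le c|h|\,\|Du\|_{L^p}$ --- only produce a factor $|h|$ for terms containing $\tau_h u$ (the cross term with $\nabla\psi\otimes\tau_h u$ and the $b$-term); they give nothing for the term above. After the Young split you describe, absorbing $\varepsilon\int D(h)^{\frac{p-2}{2}}|\tau_h Du|^2\eta^2$ into the left-hand side leaves a remainder of order $\int\omega^2\bigl(|h|^2+|\tau_h u|^2\bigr)(1+|Du|)^{p}\,dz$, which is merely $O(1)$ in $h$ (and, since $\omega$ is an abstract modulus with no rate, even Jensen only gives $o(1)$, not $O(|h|^{2\theta})$). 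Since the target estimate (\ref{hd4.2}) is obtained by dividing by $|h|^{2\theta}$ and letting $h\to 0$, every error term must carry at least $|h|^{2\theta}$; an $O(1)$ remainder makes the limit blow up, so your scheme as written cannot produce (\ref{hd4.2}).

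The paper closes this step by invoking (\ref{x2.3}), i.e.\ the consequence of the partial H\"older continuity of Theorem \ref{th1.1}: outside the null set $K=\Sigma_1\cup\Sigma_2$ one has $\omega\bigl(d(z,z_0)^2+|u-u_0|^2\bigr)\lesssim d(z,z_0)+1_K$, hence $|\tau_h^{1,2}a|\lesssim |h|\,(1+|Du|)^{p-1}$ almost everywhere, which yields the bound (\ref{hd4.05}), $c|h|\int(1+|Du|^p)\,dz$, with the needed explicit power of $|h|$ (this is also how the analogous term $H_{522}$ is handled in Lemma \ref{hdle3.2}). So the missing ingredient in your proposal is precisely the use of the already-established partial regularity of $u$ to give $\omega$ a linear rate in $|h|$; without it (or some substitute assumption such as a Dini/H\"older rate on $\omega$), the $\tau_h^{1,2}a\cdot\tau_h Du$ term cannot be made compatible with the $|h|^{2\theta}$ scaling. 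The remaining parts of your plan (the parabolic term producing the supremum in time, the cross terms, the $b$-term via (\ref{2.5}) and (\ref{4.96}), and the final passage to (\ref{hd4.1}) via the Nikolskii-to-fractional-Sobolev embedding) match the paper's proof and are fine, modulo routine bookkeeping.
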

\begin{proof}
First,   replace the test function  $\varphi$ in (\ref{2.1}) by $\tau_{-h}\varphi$ with $0<|h|<\min\{dist(supp(\varphi),\partial Q_T),1,\rho\}$, then we infer that
\begin{equation*}
\int_{Q_T}\tau_hu\cdot\partial_t\varphi-\tau_ha(z,u,Du)\cdot D\varphi+\tau_hb(z,u,Du)\cdot\varphi dxdt=0.
\end{equation*}
By approximation, we choose $\varphi\equiv \phi\tau_hu$ in the previous equation with $\phi\in C_0^{\infty}(Q_T)$. Thus, we  are in a position to obtain
that
\begin{align}\label{hd4.4}
&-\frac{1}{2}\int_{Q_T}|\tau_hu|^2\partial_t\phi dxdt+\int_{Q_T}\phi\tau_h a(z,u,Du)\cdot D\tau_h udxdt\nonumber\\
&=-\int_{Q_T}\tau_h a(z,u,Du)\cdot (\nabla\phi\otimes \tau_h u)dxdt
+\int_{Q_T}\phi\tau_hb(z,u,Du)\cdot\tau_h udxdt.
\end{align}
Now, let us choose $\phi(x,t)=\bar{\chi}(t)\chi(t)\psi^2(x)$ with $\chi\in W^{1,\infty}((-T,0))$, $\chi(-T)=0$, $\partial_t\chi\geq0$ and
$0\leq\chi\leq1$,  $\psi\in C_0^{\infty}(B_{\rho}(x_0))$, $0\leq\psi\leq1$ and $\bar{\chi}:(-T,0)\longrightarrow \mathbb{R}$ is a Lipschitz continuous
function, defined by
\begin{align*}
\bar{\chi}(t)=
\begin{cases}
1 &t\leq t_0 \\
affine & t_0<t<t_0+\delta,\\
0&t\geq t_0+\delta,
\end{cases}
\end{align*}
with $-T<t_0<t_0+\delta<0$,  $\delta>0$. Thus, letting $\delta\longrightarrow0$, then (\ref{hd4.4}) becomes
\begin{align}\label{hd4.5}
\frac{1}{2}&\int_{B_{\rho}(x_0)}\chi(t_0)\psi^2(x)|\tau_hu(x,t_0)|^2 dx+\int_{Q_{t_0}}\chi\psi^2\tau_ha(z,u,Du)\cdot D(\tau_h u)dxdt\nonumber\\
=&-2\int_{Q_{t_0}}\chi\psi \tau_ha(z,u,Du)\cdot(\nabla\psi\otimes\tau_hu)dxdt+\int_{Q_{t_0}}\tau_hb(z,u,Du)\cdot\tau_h u\chi(t)\psi^2 (x)dxdt\nonumber\\
&+\frac{1}{2}\int_{Q_{t_0}}(\partial_t\chi)\psi^2(x)|\tau_hu|^2dxdt,
\end{align}
where  $Q_{t_0}:=B_{\rho}(x_0)\times (-T,t_0)$.

Observing that
\begin{equation*}
\tau_ha(z,u,Du)=\tau_h^{1,2}a(z,u,Du(x+he_i,t))+\tau_h^{3}a(z,u,Du),
\end{equation*}
and
\begin{equation*}
  \tau_h^3 a(z,u,Du)=\int_0^1\frac{\partial a(z,u,Du+s\tau_h Du)}{\partial F}ds\cdot\tau_h Du.
\end{equation*}
Then, applying  ($H_2$) and Lemma \ref{le2.1},  we can find that
\begin{align*}
\tau_h^3 a(z,u,Du)\cdot D(\tau_h u)\geq & \nu\int_0^1(1+|Du(x,t)+s\tau_hDu(x,t)|^2)^{\frac{p-2}{2}}ds|\tau_hDu(x,t)|^2 \\
\geq& \frac{\nu}{c(p)}(1+|Du(x,t)|^2+|Du(x+he_i,t)|^2)^{\frac{p-2}{2}}|\tau_hDu(x,t)|^2\\
\equiv&\frac{\nu}{c(p)} D(h)(x,t)^{\frac{p-2}{2}}|\tau_hDu(x,t)|^2.
\end{align*}
Here we have used abbreviated notation
\begin{equation*}
D(h)(x,t):=1+|Du(x,t)|^2+|Du(x+he_i,t)|^2.
\end{equation*}
Thus, we conclude that
\begin{equation}\label{hd4.6}
\int_{Q_{t_0}}\chi\psi^2\tau_h^3a(z,u,Du)\cdot D(\tau_hu)dxdt\geq\frac{\nu}{c(p)}\int_{Q_{t_0}}\chi\psi^2D(h)(x,t)^{\frac{p-2}{2}}|\tau_hDu(x,t)|^2dxdt.
\end{equation}
Moreover, from ($H_3$) and (\ref{x2.3}), it follows that
\begin{equation}\label{hd4.05}
\int_{Q_{t_0}}\chi\psi^2\tau_h^{1,2}a(z,u,Du)\cdot D(\tau_hu)dxdt\leq c|h|\int_{Q_{t_0}}(1+|Du|^p)dxdt.
\end{equation}
Using ($H_1$), we further obtain
\begin{align}\label{hd4.7}
 |\tau_h^3 a(z,u,Du)|\leq &L\int_0^1(1+|Du(x,t)+s\tau_hDu(x,t)|^2)^{\frac{p-2}{2}}ds|\tau_hDu(x,t)| \nonumber\\
\leq &c(p)L(1+|Du(x,t)|^2+|Du(x+he_i,t)|^2)^{\frac{p-2}{2}}|\tau_hDu(x,t)|\nonumber\\
=&c(p)L D(h)(x,t)^{\frac{p-2}{2}}|\tau_hDu(x,t)|.
\end{align}
Therefore, applying (\ref{hd4.7}) and the Young's inequality, we have
\begin{align}\label{hd4.8}
&\left|2\int_{Q_{t_0}}\chi\psi \tau_h^3 a(z,u,Du)\cdot(\nabla\psi\otimes \tau_hu)dxdt\right| \nonumber\\
&\leq c(p)L\int_{Q_{t_0}}\chi\psi D(h)(x,t)^{\frac{p-2}{2}}|\tau_h Du|\cdot|\nabla \psi|\cdot|\tau_hu|dxdt\nonumber\\
&\leq \varepsilon\int_{Q_{t_0}}\chi\psi^2D(h)(x,t)^{\frac{p-2}{2}}|\tau_h Du|^2dxdt+c(\varepsilon)\int_{Q_{t_0}}\chi|\nabla\psi|^2D(h)(x,t)^{\frac{p-2}{2}}|\tau_h u|^2dxdt,
\end{align}
where $c(\varepsilon)=c(p)L^2\varepsilon^{-1}$.

Furthermore, in virtue of ($H_3$), it holds that
\begin{equation}\label{hd4.07}
\int_{Q_{t_0}}\chi\psi \tau_h^{1,2} a(z,u,Du)\cdot(\nabla\psi\otimes \tau_hu)dxdt
\leq c(L)\int_{Q_{t_0}}\chi\psi(1+|Du|)^{p-1}|\nabla\psi||\tau^hu|dxdt
\end{equation}
Inserting (\ref{hd4.6})-(\ref{hd4.07}) into (\ref{hd4.5}), and choosing $\varepsilon=\frac{\nu}{2 c(p)}$, then we  obtain
\begin{align}\label{hd4.9}
&\frac{c(p)}{\nu}\int_{B_{\rho}(x_0)}\chi(t_0)\psi^2(x) |\tau_hu(x,t_0)|^2dx+\int_{Q_{t_0}}\chi\psi^2D(h)(x,t)^{\frac{p-2}{2}}|\tau_h Du|^2dxdt\nonumber\\
\leq& c\int_{Q_{t_0}}\chi|\nabla \psi|^2D(h)(x,t)^{\frac{p-2}{2}}|\tau_h u|^2+(\partial_t\chi)\psi^2|\tau_hu|^2dxdt
+ c|h|\int_{Q_{t_0}}(1+|Du|^p)dxdt\nonumber\\
&+c\int_{Q_{t_0}}\chi\psi(1+|Du|)^{p-1}|\nabla\psi||\tau^hu|dxdt+\int_{Q_{t_0}}\tau_hb(z,u,Du)\cdot(\tau_hu)\chi(t)\psi^2(x)dxdt,
\end{align}
where $c=c(n,L,p,\nu)$.

Let $Q_{\rho}(z_0)\subset Q_T$ with $\rho$ suitable small such that $2\rho< \min\{dist(x_0,\partial\Omega),1\}$. Now, we choose
 $\chi\in W^{1,\infty}(\mathbb{R})$ and $\psi\in C_0^{\infty}(B_{\rho}(x_0))$ satisfying
\begin{align*}
\begin{cases}
 \chi(t)=0 &t\in (-\infty,t_0-\rho^2), \\
   \chi(t)=1& t\in(t_0-(\rho/2)^2,\infty),\\
   0\leq\partial_t\chi\leq(2/\rho)^2&t\in \mathbb{R},
  \end{cases}
\end{align*}
and
\begin{align*}
\begin{cases}
  \psi=1 & x\in B_{\rho/2} (x_0),\\
  0\leq\psi\leq1 & x\in B_{\rho} (x_0),\\
  |\nabla\psi|\leq\frac{3}{\rho}&x\in B_{\rho}(x_0).
  \end{cases}
\end{align*}
Note that, for any $\theta\in (0,\frac12]$, the right-most term of (\ref{hd4.9}) can be estimated as
\begin{align}\label{hd4.10}
&\int_{Q_{t_0}}\tau_hb(z,u,Du)\cdot(\tau_hu)\chi(t)\psi^2(x)dxdt\nonumber\\
&\leq c \int^0_{t_0-(2\rho)^2}\|\psi^2\tau_hu\|_{L^p(B_{\rho}(x_0))}(|B_{\rho}|^{\frac1{p'}}+\|u\|_{W^{1,p}(B_{\rho}(x_0))})dt\nonumber\\
&\leq c(L,|\Omega|)\int_{t_0-(2\rho)^2}^0\|\tau_hu\|^{p\theta}_{L^p(B_{\rho}(x_0))}(1+\|u\|_{W^{1,p}(B_{\rho}(x_0))}^{2-p\theta}).
\end{align}
Employing (\ref{hd4.9}), (\ref{hd4.10}), we finally obtain
\begin{align*}
& \int_{B_{\rho}(x_0)}\chi(t_0)\psi^2(x) |\tau_hu(x,t_0)|^2dx+\int_{Q_{t_0}}\chi\psi^2D(h)(x,t)^{\frac{p-2}{2}}|\tau_h Du|^2dxdt \\
\leq &c\int_{Q_{t_0}}\frac{1}{\rho^2}\chi D(h)(x,t)^{\frac{p-2}{2}}|\tau_h u|^2+\frac{1}{\rho^2}\psi^2(x)|\tau_h u|^2dxdt
+ c|h|\int_{Q_{t_0}}(1+|Du|^p)dxdt\\
&+\frac{c}{\rho}\int_{Q_{t_0}}\chi\psi(1+|Du|)^{p-1}|\tau^hu|dxdt
+c\int^0_{t_0-(2\rho)^2}\|\tau_{h,k}u\|^{p\theta}_{L^p(B_{\rho}(x_0))}(1+\|u\|^{2-p\theta}_{W^{1,p}(B_{\rho}(x_0))})dt
\end{align*}
By Lemma \ref{hdle2.1}, the previous inequality implies that
\begin{align}\label{hd4.14}
& \int_{B_{\rho}(x_0)}\chi(t_0)\psi^2(x) |\tau_hu(x,t_0)|^2dx+\int_{Q_{t_0}}\chi\psi^2|\tau_h^3 a(z,u,Du)|^2dxdt\nonumber \\
 \leq& c\int_{Q_{t_0}}\frac{1}{\rho^2}\chi D(h)(x,t)^{\frac{p-2}{2}}|\tau_h u|^2+\frac{1}{\rho^2}\psi^2(x)|\tau_h u|^2dxdt\nonumber \\
&+c\int^0_{t_0-(2\rho)^2}\|\tau_{h}u\|^{p\theta}_{L^p(B_{\rho}(x_0))}(1+\|u\|^{2-p\theta}_{W^{1,p}(B_{\rho}(x_0))})dt\nonumber \\
&+ c|h|\int_{Q_{t_0}}(1+|Du|^p)dxdt
+\frac{c}{\rho}\int_{Q_{t_0}}\chi\psi(1+|Du|)^{p-1}|\tau^hu|dxdt
\end{align}
Dividing both side in (\ref{hd4.14}) by $|h|^{2\theta}$. Then, we can see that
\begin{align*}
  \sup_{t_0-(\rho/2)^2<t<t_0} & \int_{B_{\rho/2}(x_0)}|\tau_hu(x,t_0)|^2|h|^{-2\theta}dx+\int_{Q_{\rho/2}(z_0)}|\tau_h^3 a(z,u,Du)|^2|h|^{-2\theta}dxdt\\
\leq &c\int_{Q_{t_0}}\frac{1}{\rho^2}\chi D(h)(x,t)^{\frac{p-2}{2}}|\tau_h u|^2|h|^{-2\theta}+\frac{1}{\rho^2}\psi^2(x)|\tau_h u|^2|h|^{-2\theta}dxdt\\
&+c\int^0_{t_0-(2\rho)^2}
\|\tau_{h}u\|^{p\theta}_{L^p(B_{\rho}(x_0))}|h|^{-2\theta}(1+\|u\|^{2-p\theta}_{W^{1,p}(B_{\rho}(x_0))})dt\\
&+ c|h|^{1-2\theta}\int_{Q_{t_0}}(1+|Du|^p)dxdt
+\frac{c}{\rho}\int_{Q_{t_0}}\chi\psi(1+|Du|)^{p-1}|\tau^hu|h^{-2\theta}dxdt\nonumber\\
\leq & c\int_{Q_{t_0}}\frac{1}{\rho^2}\chi D(h)(x,t)^{\frac{p-2}{2}}|\triangle_h u|^2+\frac{1}{\rho^2}\psi^2(x)|\triangle_h u|^2dxdt\\
&+c\int^0_{t_0-(2\rho)^2}
\|\tau_{h}u\|^{p\theta}_{L^p(B_{\rho}(x_0))}|h|^{-2\theta}(1+\|u\|^{2-p\theta}_{W^{1,p}(B_{\rho}(x_0))})dt\\
&+ c|h|^{1-2\theta}\int_{Q_{t_0}}(1+|Du|^p)dxdt
+\frac{c}{\rho}\int_{Q_{t_0}}\chi\psi(1+|Du|)^{p-1}|\tau^hu|^{1-2\theta}|\triangle_hu|^{2\theta}dxdt
\end{align*}
Using  the standard estimate for difference quotients, letting $|h|\longrightarrow 0$, then we have (\ref{hd4.2}) and (\ref{hd4.1}).
\end{proof}
\begin{proof}[Proof of Theorem \ref{th1.2}]
Taking into account Proposition \ref{pr2.1}, (\ref{hd2.6}) and Sec. \ref{se6.1} for $p=2$, we have proved the fractional time differentiability of $Du$.
Hence, it is remains to prove the fractional space differentiability of $Du$.
From (\ref{hd4.2})  we can see that for  $p=2$
\begin{align*}
 &\lim_{h\longrightarrow0}\sup_{t_0-(\rho/2)^2<t<t_0} \int_{B_{\rho/2}(x_0)}\frac{|\tau_hu(x,t_0)|^2}{|h|^{2\theta}}dx
 +\lim_{h\longrightarrow0} \int_{Q_{\rho/2}(z_0)}\frac{|\tau_hDu|^2}{|h|^{2\theta}}dxdt\nonumber\\
 & \leq\frac{c}{\rho^2}\int_{Q_{2\rho}(z_0)}(1+|Du|^2)dxdt+\frac{c}{\rho^2}\int_{Q_{2\rho}(z_0)}|Du|^2dxdt
 +c\int_{t_0-(2\rho)^2}^{0}(1+\|u\|_{H^1(B_{2\rho}(x_0))}^{2})dt.
\end{align*}
This, combined with (\ref{hd2.7}) implies the fractional space differentiability of $Du$. Finally, applying Lemma \ref{hdle2.3} we obtain (\ref{hd1.7}).
Thus, we have completed the proof of Theorem \ref{th1.2}.
\end{proof}

\end{document}